\documentclass[10pt,a4paper]{article}
\usepackage[T1]{fontenc}
\usepackage{amsmath, amsthm, amsfonts, amssymb, enumerate, xcolor,bm,graphicx, url}
\usepackage[margin=1in]{geometry}

\usepackage{algpseudocode,algorithm}

\newcommand{\imgscale}{0.75}

\newtheorem{theorem}{Theorem}

\newtheorem{proposition}[theorem]{Proposition}
\newtheorem{lemma}[theorem]{Lemma}
\newtheorem{assumption}{Assumption}
\numberwithin{theorem}{section}
\renewcommand{\d}{\mathrm{d}}
\newcommand{\C}{\mathbb{C}}
\newcommand{\R}{\mathbb{R}}
\newcommand{\Z}{\mathbb{Z}}

\newcommand{\bE}{\mathbb{E}}
\newcommand{\bV}{\mathbb{V}}
\newcommand{\bP}{\mathbb{P}}

\newcommand{\psib}{\bm{\psi}}
\renewcommand{\S}{\mathcal{S}}
\newcommand{\V}{\mathcal{V}}

\newcommand{\bv}{\mathbf{v}}
\newcommand{\bw}{\mathbf{w}}
\newcommand{\bl}{\mathbf{l}}
\newcommand{\br}{\mathbf{r}}
\newcommand{\sym}{{\mathrm{sym}}}

\newcommand{\pX}{\psib_{X}}
\newcommand{\pY}{\psib_{Y}}

\newcommand{\PXX}{\Psi_{XX}}
\newcommand{\PXY}{\Psi_{XY}}
\newcommand{\hPXX}{\hat{\Psi}_{XX}}
\newcommand{\hPXY}{\hat{\Psi}_{XY}}

\newcommand{\Cl}[1][\lambda]{C_{#1}}
\newcommand{\cl}[2][\lambda]{C_{#1,#2}}
\newcommand{\hCl}[1][\lambda]{\hat{C}_{#1}}
\newcommand{\tCl}[1][\lambda]{\tilde{C}_{#1}}

	\newcommand{\Qst}{Q_{\lambda}}

\DeclareMathOperator{\tr}{tr}
\DeclareMathOperator{\specrad}{spec{\,}rad}
\DeclareMathOperator{\Ces}{Ces}

\DeclareMathOperator{\interior}{int}
\DeclareMathOperator{\im}{im}

\DeclareMathOperator*{\esssup}{ess{\,}sup}

\newcommand{\iid}{{\em i.i.d{.}}}

\title{Sampling pseudospectrum for data-driven matrices}
\author{Caroline Wormell\thanks{School of Mathematics and Statistics, The University of Sydney \\
		email: {\sf caroline.wormell@sydney.edu.au}\\
		ORCID: 0000-0003-2953-6493}}
\begin{document}
\maketitle

\begin{abstract}
	Many complex systems can be reduced to their key components through spectrally decomposing matrices that capture their dynamics. These matrices can in turn be constructed from data, often by least-squares fitting: examples of algorithms to do this include Dynamical Mode Decomposition and variants, subspace identification and eigenvalue realisation algorithms. Typical outputs of these algorithms include a range of isolated, peripheral eigenvalues capturing persistent emergent patterns in the system. However, there is no objective way to assess which of these discrete eigenvalues are artefacts of finite data error, and which are reflections of a fully sampled operator. 
	
	In this paper, we present a sampling pseudospectrum $P(\lambda)$, that provides probabilistic information on the behaviour of finite-data eigenvalues in the complex plane, and an estimator $\hat P(\lambda)$, which can be obtained by reprocessing our finite data sample. The estimator, which is computationally efficient to implement, allows us to test statistically for the location of the true eigenvalues. This gives us a rigorous and very general way to assess whether the patterns we extract from finite data are likely to be signal or noise.
\end{abstract}

Large systems, from turbulent fluids to power grids to economies, often behave in ways that are not immediately evident from the equations that govern them, which themselves might not be known. Instead, they are very often studied by constructing a linear operator whose spectrum reflects properties of the original system. This linear operator is often inferred from data. In dynamical systems, this is the philosophy behind Dynamic Mode Decomposition and its variants, including Extended Dynamic Mode Decomposition (EDMD) and kernel EDMD; in systems theory it appears in methods such as subspace identification and eigensystem realisation algorithms
\cite{Schmid22,Williams15,Korda18,Overschee96,Juang85}.
In each case, a finite set of observations is used to construct a matrix, or a generalised eigenvalue problem, whose eigenvalues and eigenvectors are then interpreted as persistent modes, resonances, coherent structures or metastable patterns of the underlying system.

The difficulty is that these matrices are themselves random objects, because they come from a finite data sample. These matrices can be understood as approximations to a ``true'' matrix that that would be obtained with infinitely many samples.  Since dynamical operators, and therefore matrices arising in data-driven operator approximation, are often highly non-normal, their eigenvalues can be acutely and differentially sensitive to perturbations. It is therefore not immediately clear how to assess which of a large cloud of eigenvalues contain meaningful spectral information, and which are artefacts of finite sampling error, akin to eigenvalues of a random matrix. 
In applications, this poses a basic question: which isolated eigenvalues should be trusted?

We can illustrate this problem in the context of Koopman-based methods.  Koopman spectral analysis seeks to describe nonlinear dynamics through the linear action of the {\it Koopman operator} $\mathcal{K}$ on observable functions $\varphi$ of the system's state \cite{Koopman31,Mezic05}, defined by $\mathcal{K} \varphi = \varphi \circ f$, where $f$ is the time-one map of the dynamical system.  EDMD and related least-squares methods provide computationally tractable approximations of this operator, when restricted to a finite dictionary of observables $\varphi$.  For chaotic systems, the spectral objects of interest are often Ruelle--Pollicott resonances (or finite-dimensional estimates of them) which govern rates of decay, oscillatory
components and coherent behaviour.  However, a priori error analysis of these resonances is difficult outside of toy models \cite{Herwig25}, as the resonances are typically only discrete eigenvalues of the Koopman operator when posed on Banach spaces that are difficult to identify explicitly. Furthermore, it is not clear how to accurately assess the spectral error associated to data-based discretisations. Recent work in this area has studied $L^2$ spectra, which in the chaotic setting are continuous and supported on the unit circle, and require coercion to unitary operators to be evidenced in DMD and EDMD \cite{Colbrook22}.

Classical pseudospectra provide one response to eigenvalue sensitivity: one chooses a matrix norm, and asks where eigenvalues could move under matrix perturbations of a given size in this norm
\cite{Trefethen05}. This paper develops an analogous object, specifically geared to sampling error. Rather than considering arbitrary worst-case perturbations in a fixed norm, we consider the random perturbations likely to produced by replacing an expectation with a finite sample average. Furthermore, because the appropriate norm is rarely known in applications, particularly in Koopman analysis, we optimise over all Hilbert norms. This provides a norm-independent output. The resulting object is a
\emph{sampling pseudospectrum}: a function $P$ on the complex plane that quantifies
how plausible it is for data-driven eigenvalues to occur near a given point.

The sampling pseudospectrum has a range of pseudospectrum-like properties, but has a probabilistic flavour. The key point is that it approximately bounds the log-likelihood of a finite-data eigenvalue in a particular set (Theorem~\ref{t:PIsSpecRad}): in particular,
\[ \bP(\lambda \textrm{ is close to an eigenvalue}) \lesssim e^{- M P(\lambda) / 2}, \]
with $P(\lambda) = 0$ if and only if $\lambda$ is an eigenvalue of the infinite-data limit (Proposition~\ref{p:PvsEigvals}). Similar to the classical pseudospectrum, we this allows us to cluster eigenvalues, with an eigenvalue counting result holds within level sets of $P(\lambda)$ (Theorem~\ref{t:PIsSpecRad}). This provides an objective tool to study how the spectra of Koopman and other operator approximations respond when finite-data error is introduced.

The key practical issue, however, is to work the other way: to use finite data to study infinite-data spectra. We construct an estimator $\hat P(\lambda)$ that allows us to do this. There are two key facts about $\hat P(\lambda)$. The first is that it is a consistent estimator of $P(\lambda)$ as the number of snapshots $M \to \infty$ (note that in this paper we assume that the observable dictionary is fixed). The second is that if $\hat P(\lambda) \to 0$ as $M\to\infty$, i.e. if $\lambda$ is a true eigenvalue, then $M \hat P(\lambda)$ has a certain distributional limit that can be used to test statistically for $\lambda$ being a true eigenvalue. This can be used to rule true eigenvalues out of areas of the phase space from finite data, and thus group finite-data eigenvalues into clusters that are likely to remain spectrally independent as the amount of data is increased. We have implemented methods to compute $\hat P(\lambda)$ in a Julia package {\tt SamplingPseudospectrum.jl} \cite{SP}

Both $P(\lambda)$ and $\hat P(\lambda)$ are formulated as optimisation problems over Hilbert norms (see Section~\ref{ss:Indicator} and Section~\ref{s:StatisticalTest} respectively). It turns out that these optimisation problems can be effectively represented as spectral radii of operators on matrices, and bounded via the power method (see Section~\ref{s:Algorithms}). This makes computation of sampling pseudospectra simple and easy: in practice, computing $\hat P(\lambda)$ for a given $\lambda$ takes the same order of time as computing the least-squares matrix approximation. 

The approach is highly general, and can be used to solve a large array of generalised eigenvalue problems. The main requirement on the kinds of data sampling that it can study is that it has a central limit theorem: this includes all kinds of random sampling, as well as ergodic sampling by chaotic systems.

The paper is structured as follows. In Section~\ref{s:Setup} we set up the problem and provide the mathematical notation used in the sequel. In Section~\ref{s:GeneralConcept} we present the basic idea underpinning the quantities we then present: the theoretical sampling pseudospectrum, giving probabilistic bounds on the location of the data-driven Koopman spectrum (Section~\ref{s:ProbabilisticBounds}) and the data-based estimator of it, which provides a statistical test for eigenvalue location (Section~\ref{s:StatisticalTest}). We then present efficient algorithms to compute the relevant quantities (Sections~\ref{s:Algorithms}--\ref{s:Variance}). Finally, we evaluate our methods on some examples: an autoregressive process (Section~\ref{s:VAR}), 1D expanding dynamics (Section~\ref{s:Numerics1D}), the Lorenz attractor (Section~\ref{s:NumericsL63}) and Rayleigh--B\'enard convection (Section~\ref{s:NumericsRB}). Finally we discuss future directions in Section~\ref{s:Conclusion}.

\section{Problem setup and notation}\label{s:Setup}

We begin by presenting this problem abstractly (and very generally), and then explain how the standard Dynamical Mode Decomposition problem fits into this framework.

\subsection{Abstract formulation}\label{ss:AbstractFormulation}

We will model our sampling process as a random dynamical system $(\Omega, \theta, \mu)$, as described in \cite{Arnold98}. (This dynamical system may be different to any dynamics associated to the operator, and can cover \iid and other random sampling.) The state of our sampling process at time $m$ is given as an event $\omega_m$ in a state space $\Omega$. This event may contain hidden information, including about future states, allowing for random sampling in this framework. The state evolves ergodically as $\omega_{m+1} = \theta(\omega_m)$, with invariant measure $\mu$. 

From each $\omega \in \Omega$ and for each $\lambda \in E \subseteq \C$, we can observe some snapshot ``characteristic matrix'' \begin{align}
	\cl{\omega} \in \C^{N\times N} \tag{snapshot characteristic matrix}
\end{align}
We assume these matrices are uniformly bounded in $\lambda, \omega$, and analytic functions of $\lambda$ on $E$.

Let us define the expectation and sample average of these matrices:
\begin{align} \Cl &:= \bE[\cl{\omega}] \tag{characteristic matrix} \\
\hCl &:= \frac{1}{M} \sum_{m=1}^M \cl{\omega_m}. \tag{sample characteristic matrix}
\end{align}

We will notate the difference between these two
\begin{align*} \tCl &:= \hCl - \Cl \end{align*}
and we can see that $\bE[\tCl] = 0$. 

We can imagine these matrices as characteristic matrices of generalised eigenvalue problems, with a notion of ``spectrum'' given by
\begin{align}  \Sigma &= \{ \lambda \in E : \ker \Cl \neq \{0\} \} \tag{generalised spectrum} \\
\hat \Sigma &= \{ \lambda \in E : \ker \hCl \neq \{0\} \} \tag{sample generalised spectrum}
\end{align}
In both cases, the geometric multiplicity of $\lambda \in \Sigma$ is the multiplicity of the relevant kernel; the algebraic multiplicity may not match that of the kernel, but is the rank of the Riesz projection 
$ \frac{1}{2\pi i} \int_{\Gamma} \Cl[z]^{-1} \d z $
for some small closed circle $\Gamma$ around $\lambda$.

The aim of this paper is to provide a way to learn as much as we can about one set of these generalised eigenvalues using only information associated with the other.

\subsection{Koopman context}\label{ss:KoopmanContext}

As a significant example, let us translate this setup into the context of Koopman theory.

In the Koopman operator context, our samples $\omega$ yield realisations $(X(\omega),Y(\omega))$ where $X(\omega), Y(\omega)$ are in the phase space $\Lambda$ of the dynamical system $f:\Lambda \to \Lambda$ corresponding to the Koopman operator. The natural assumption is that the sampling process is constructed so the transition kernel $\bP(Y \in A \mid X = x)$ is the same as that of the dynamical system of interest. For example, if the dynamical system is deterministic with $Y = f(X)$, then we would assume $Y(\omega) = f(X(\omega))$.

Different choices of sampling process $\theta$ and ergodic sampling measure $\mu$ yield different sampling patterns when it comes to $X,Y$. For example, in this framework, possibilities for the evolution of $\{X(\omega_m)\}$ include:
\begin{itemize}
	\item A sample time series from the dynamics, i.e. with $X(\omega_{m+1}) = Y(\omega_m)$;
	\item \iid{} distributed according to any probability measure;
	\item The evolution of a Monte Carlo Markov Chain sampler.
\end{itemize}

Given some set of observable functions $\psib: \Lambda \to \C^N$ and a finite number of observations \[ \{(X(\omega_m),Y(\omega_m))\}_{m=1,\ldots,M},\] we can make a least-squares approximation of the Koopman operator of $f$. If we let
\begin{align*} \hPXX &= \frac 1M \sum_{m=1}^M \psib(X(\omega_m)) \psib(X(\omega_m))^* &
	\hPXY &= \frac 1M \sum_{m=1}^M \psib(X(\omega_m)) \psib(Y(\omega_m))^* 
	\end{align*}
then our least-squares Koopman operator approximation is
\[ \hat K = \hPXX^{-1} \hPXY, \]
provided $\hPXX$ is invertible. 

If we define for $\lambda \in \C$ the snapshot characteristic matrix
	\[ \cl{\omega} = \lambda \psib(X(\omega)) \psib(X(\omega))^* - \psib(X(\omega)) \psib(Y(\omega))^* \]
we then notice that the sample characteristic matrix is just
\[ \hCl = \lambda \hPXX - \hPXY\]
and so in our set-up 
\[ \sigma(\hat K) = \left\{ \lambda \in \C : \ker (\lambda \hPXX - \hPXY) \neq \{0\} \right\} = \hat \Sigma\]
and $E = \C$.

It will be helpful to note that the snapshot characteristic matrices are rank-one (see, e.g. Section~\ref{ss:FastVariance})
\[ \cl{\omega} = \psib(X(\omega)) (\bar\lambda \psib(X(\omega)) - \psib(Y(\omega)))^*. \]
	
Taking the number of sample points $M \to \infty$, we find that 
\begin{align*} \hPXX \xrightarrow{M\to\infty} \PXX &:= \int_\Omega \psib(X) \psib(X)^*\, \d \mu &
	\hPXY \xrightarrow{M\to\infty}  \PXY &:= \int_\Omega \psib(X) \psib(Y)^*\, \d \mu
\end{align*}
This means that in the infinite data limit, our EDMD matrix converges
\[ \hat K \xrightarrow{M\to\infty} K := \PXX^{-1} \PXY, \]
and we have our true characteristic matrix 
\[ \Cl = \lambda \PXX- \PXY \]
meaning that the limiting spectrum $\sigma(K)$ is equal to $\Sigma$, matching the abstract set-up.

We note that $\sigma(K)$ is not the {\it true} spectrum of the actual, infinite-dimensional Koopman operator. Instead, it is just a projection of it to finite dictionary size. 
We do not consider the finite dictionary error in this paper, noting that unifying finite data error and finite dictionary error is known to be impossible in general for dynamical systems \cite{Colbrook24}. However, in many cases the dictionary can be chosen such that the finite data error dominates the finite dictionary size error \cite{Slipantschuk20,Wormell25,Arbabi17,Koltai23}.

\subsection{Positive definite matrices}

We will use theory of positive definite matrices heavily in this paper. 

A matrix is Hermitian if its conjugate transpose $Q^*$ is equal to $Q$. 

A Hermitian matrix $Q$ is positive definite (resp.~positive semi-definite) if $\bv^* Q \bv > 0$ (resp.~$\geq 0$) for all vectors $\bv \neq \bm{0}$. 

We will write $Q \succ B$ (resp. $Q \succeq B$) if $Q-B$ is positive definite (resp. positive semi-definite).

Furthermore, for every $N\times N$ positive definite Hermitian matrix $Q \succ 0$ we can define the following inner product on $\C^N$
\[ \langle \bv, \bw \rangle_Q := \bv^* Q \bw. \]
This induces a norm $\| \cdot \|_{Q}$. Conversely, every inner product on $\C^N$ is equal to $\langle \bv, \bw \rangle_Q$ for some $Q \succ 0$.

\subsection{Ces\`aro summation}

It will be useful to use the notion of Ces\`aro summation to deal with correlation functions. The Ces\`aro resummation of a two-sided (formal) sum is defined as 
\begin{equation} \Ces \sum_{t=-\infty}^\infty A_t := \lim_{U\to\infty} \frac{1}{U} \sum_{T=1}^U \sum_{t=-T}^{T} A_t \label{eq:Cesarodef}\end{equation}

It is precisely the kind of summation necessary to compute limiting variances of Birkhoff means of a time series. When the usual infinite sum exists, the two are equal, but the Ces\`aro sum comes into its own where there is a periodic component to $\{A_t\}_{t \in \Z}$ and the usual sum therefore does not converge.

\subsection{Assumptions on sample means}

This work is designed for operators sampled from the many random or chaotic processes with central limit theorem-type convergence of averages. 
Our results conceptually assume that the $\{\cl{\omega_m}\}$ process satisfies a central limit theorem. Let us define for any linear functional $h: \C^{N \times N} \to \C$,
\[ S_{M}(\omega,h) = \frac1M \sum_{m=1}^M h[\cl{\sigma^m(\omega)}] - h[\Cl]. \]
Then we assume:
\begin{assumption}[Central limit theorem] \label{as:birkhoff}
	For any linear $h: \C^{N \times N} \to \C$,
	\begin{itemize}
		\item The so-called Birkhoff variance of $h$
	\begin{align} \sigma^2_{h,\lambda} := \lim_{M\to\infty} M \bE[|S_M(\omega,h)|^2] = \Ces\sum_{t=-\infty}^\infty \left(\bE[h(\cl{\omega+t})^* h(\cl{\omega})] - \bE[h(\Cl)^* h(\Cl)]\right) \label{eq:Birkhoffdef} \end{align}
	is well-defined and finite, where $\Ces \sum$ indicates Ces\`aro summation.
		\item 	The $\{\cl{\omega_m}\}$ process satisfies a central limit theorem, i.e. the real and imaginary parts of $\sqrt{M} S_M(\cdot,h)$ jointly converge to a centred normal distribution in $\R^2$ whose covariance matrix has trace $\sigma^2_{h,\lambda}$.
		\end{itemize} 
\end{assumption}
This allows us to define the {\it variance operator}, which generalises the notion of Birkhoff variance to vectors, given a (semi-)norm $\| \cdot \|_Q$ with $Q \succeq 0$:
\begin{equation}
	\V_\lambda[Q] = \lim_{M\to\infty}  M \bE[\|\tCl\bv \|_Q^2] \label{eq:Vdef}.
\end{equation}

Because $\| \cdot \|_Q^2 + \alpha \| \cdot \|_B^2 = \| \cdot\|_{Q + \alpha B}^2$, $\V_\lambda$ is a linear operator. Assumption~\ref{as:birkhoff} in fact gives us a formula for $\V_\lambda$:
\begin{proposition}\label{p:VAsCorrelationSum}
	For all $\lambda \in E$, $\V_\lambda$ is a bounded linear operator on Hermitian matrices, and maps positive semi-definite matrices to positive semi-definite matrices. 
	\begin{equation*} \V_\lambda[Q] = \Ces \sum_{t = -\infty}^\infty \left(\bE[\cl{\omega}^* Q \cl{\theta^t(\omega)}] - \Cl^* Q\Cl\right)\end{equation*}
\end{proposition}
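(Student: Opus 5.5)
The proof reduces the matrix-valued statement to the scalar Birkhoff variances of Assumption~\ref{as:birkhoff}, using rank-one functionals, then assembles the matrix by linearity and polarization. First I would make the definition precise: $\V_\lambda[Q]$ is the Hermitian matrix whose quadratic form is $\bv^*\V_\lambda[Q]\bv = \lim_{M\to\infty} M\bE[\|\tCl\bv\|_Q^2]$. Such a form determines a unique Hermitian matrix by polarization, once the limit is known to exist for every $\bv$.

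\textbf{Rank-one case.} Take $Q = \bw\bw^*$ and fix $\bv$. Define the linear functional $h(A) = \bw^* A \bv$. Then $\|\tCl\bv\|_Q^2 = |h(\tCl)|^2 = |S_M(\omega,h)|^2$. By the first item of Assumption~\ref{as:birkhoff}, $M\bE[|S_M(\omega,h)|^2]$ converges to $\sigma^2_{h,\lambda}$. Moreover
\[ h(\cl{\omega})^* h(\cl{\theta^t\omega}) = \bv^*\cl{\omega}^*\bw\bw^*\cl{\theta^t\omega}\bv , \qquad h(\Cl)^*h(\Cl) = \bv^*\Cl^*\bw\bw^*\Cl\bv. \]
Hence
\[ \bv^*\V_\lambda[\bw\bw^*]\bv = \bv^*\Big(\Ces\sum_t \big(\bE[\cl{\omega}^*Q\cl{\theta^t\omega}]-\Cl^*Q\Cl\big)\Big)\bv. \]
Should one prefer not to rely on the second equality in \eqref{eq:Birkhoffdef}, it can be rederived directly. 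Expand $M\bE|S_M|^2 = \frac1M\sum_{m,n=1}^M \bE[(h(\cl{\omega_m})-h(\Cl))^*(h(\cl{\omega_n})-h(\Cl))]$. Using $\theta$-invariance of $\mu$, the summand depends only on $t=n-m$, and equals $A_t := \bE[h(\cl{\omega})^*h(\cl{\theta^t\omega})]-|h(\Cl)|^2$, since $\bE[\cl{\omega}]=\Cl$. The Fej\'er identity $\sum_{|t|<M}(1-|t|/M)A_t = \frac1M\sum_{T=0}^{M-1}\sum_{|t|\le T}A_t$ then shows this is exactly the Ces\`aro partial sum of \eqref{eq:Cesarodef}, up to an index shift. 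The shift is harmless because the $A_t$ are uniformly bounded.

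\textbf{General $Q$.} Any Hermitian $Q$ can be written as $Q=\sum_i q_i \bw_i\bw_i^*$ with real $q_i$, by the spectral theorem. Both $M\bE[\bv^*\tCl^*Q\tCl\bv]$ and the Ces\`aro sums are linear in $Q$, so the limit exists and the formula holds for every $\bv$ and every Hermitian $Q$. Polarization in $\bv$ then identifies the matrices themselves, giving the displayed formula. Existence of the entrywise Ces\`aro limit follows because each entry is a finite linear combination of quadratic forms whose limits exist.

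\textbf{Remaining properties.} $\V_\lambda[Q]$ is Hermitian because its quadratic form $\lim M\bE[\bv^*\tCl^*Q\tCl\bv]$ is real; alternatively, conjugate-transposing the sum sends $t\mapsto -t$ by stationarity. Linearity was established above. Boundedness is automatic, since $\V_\lambda$ is a linear map on the finite-dimensional real space of Hermitian $N\times N$ matrices. Positivity holds because, for $Q\succeq0$, $\bv^*\V_\lambda[Q]\bv$ is a limit of the nonnegative numbers $M\bE\|\tCl\bv\|_Q^2$.

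\textbf{Main obstacle.} The main point needing care is the passage from scalar functionals to general $Q$: the assumption only controls scalar $h$, and the limit must be shown to exist before linearity can be invoked. The rank-one decomposition combined with polarization handles this. The only other check is that the paper's notation $\cl{\omega+t}$ in \eqref{eq:Birkhoffdef} means $\cl{\theta^t\omega}$.
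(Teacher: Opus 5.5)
Your proof is correct and follows essentially the same route as the paper's: write $Q$ as a real combination of rank-one matrices $\bw\bw^*$, apply Assumption~\ref{as:birkhoff} to the scalar functionals $A\mapsto \bw^*A\bv$, and assemble by linearity; your polarization, Hermitian and boundedness remarks make explicit what the paper leaves implicit. One small correction to your optional Fej\'er aside: uniform boundedness of the $A_t$ does not by itself make the index shift harmless, because the discrepancy is $(s_M-s_0)/M$ with $s_T=\sum_{|t|\le T}A_t$; however, existence of either limit forces $s_M/M\to 0$, so your conclusion stands.
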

This proposition is proved in Appendix~\ref{a:VPropositions}.

 Central limit theorems are fairly well-known for a range of systems: SDEs, Markov processes with positive kernels, and many chaotic systems \cite{Liverani96, Hennion01, Khasminskii11}.

\section{Basic idea}\label{s:GeneralConcept}

We will introduce the concept behind the indicators we study. This can be pushed in two directions: firstly to establish quantitative probabilistic bounds on the location of data-driven eigenvalues (Section \ref{s:ProbabilisticBounds}) and secondly to provide a post-hoc statistical test for the location of true eigenvalues (Section~\ref{s:StatisticalTest}).

\subsection{Indicator $P(\lambda)$}\label{ss:Indicator}

Recalling our definition of the sampling error $\tCl = \hCl - \Cl$, we define the function $P: E \to [0,\infty]$ to be
\begin{align} P(\lambda) &= \sup_{Q \succ 0} P(\lambda,Q), & P(\lambda,Q) &= \inf_{\bv \neq \bm{0}} \frac{\| \Cl \bv\|_Q^2}{\V_\lambda[Q]} \label{eq:Pdef}\end{align}
where the variance operator is defined in \eqref{eq:Vdef}.

The key idea is that $P(\lambda)$ is small when $\lambda$ is likely to be in the sample spectrum $\Sigma$, and generally large when it is not. The idea of this is as follows.

Each $P(\lambda,Q)$ is a kind of Rayleigh quotient for $\hCl$, measured against its limiting sample variance through $\V_\lambda[Q]$. If $P(\lambda,Q)$ is large for some $Q$, then the sample characteristic matrix $\hCl$ is far from being non-invertible when measured in the $\|\cdot \|_Q$ norm.  In light of the CLT assumption on $\hCl$, this implies that the probability that $\lambda$ is in the sample spectrum $\hat{\Sigma}$ must be low. 

Finally, to avoid any insensitivity caused by sub-optimal choice of norm $\| \cdot \|_Q$, we maximise over all $Q$ to get $P(\lambda)$.

Under very general assumptions we have the following first result:
\begin{proposition}\label{p:PvsEigvals}
	$P(\lambda) = 0$ if and only if $\lambda \in \Sigma$.
\end{proposition}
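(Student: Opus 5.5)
The proof splits into the two implications, and both rest on reading $P(\lambda,Q)$ as a generalised Rayleigh quotient,
\[ P(\lambda,Q) = \inf_{\bv\neq \bm 0} \frac{\bv^* \Cl^* Q \Cl \bv}{\bv^* \V_\lambda[Q]\bv}. \]
We use from Proposition~\ref{p:VAsCorrelationSum} that $\V_\lambda$ is a bounded linear map sending $Q\succeq 0$ to $\V_\lambda[Q]\succeq 0$. For the quotient to make sense when the denominator vanishes, we adopt the convention $c/0=+\infty$ for $c>0$. The ``very general assumptions'' of the statement should then fix the degenerate case $0/0$. The natural choice is either to set $0/0 := 0$, or to assume $\ker \V_\lambda[Q]\cap\ker\Cl=\{0\}$ fails only trivially. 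We will state this convention explicitly at the outset.

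\emph{If $\lambda\in\Sigma$, then $P(\lambda)=0$.} Pick $\bv\neq\bm 0$ with $\Cl\bv=\bm 0$. For every $Q\succ 0$ the numerator at this $\bv$ is $0$, while the denominator $\bv^*\V_\lambda[Q]\bv\ge 0$ by positivity of $\V_\lambda$. If the denominator is positive, the quotient at this $\bv$ equals $0$, so $P(\lambda,Q)=0$. If it is zero, the $0/0$ convention again gives $0$. Since $P(\lambda,Q)\ge 0$ always, we get $P(\lambda,Q)=0$ for every $Q$, and taking the supremum gives $P(\lambda)=0$. The only delicate point in this direction is the degenerate denominator, which is why the convention must be fixed first.

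\emph{If $\lambda\notin\Sigma$, then $P(\lambda)>0$.} Now $\Cl$ is invertible, so its smallest singular value $s>0$. It suffices to exhibit one $Q$ with $P(\lambda,Q)>0$, and we take $Q=I$. Then the numerator satisfies $\|\Cl\bv\|^2\ge s^2\|\bv\|^2$. The denominator satisfies $\bv^*\V_\lambda[I]\bv\le \|\V_\lambda[I]\|\,\|\bv\|^2$, which is finite because $\V_\lambda$ is bounded. Hence every quotient is at least $s^2/\|\V_\lambda[I]\|>0$. If $\V_\lambda[I]=0$, every quotient is $+\infty$, which is also positive. Therefore $P(\lambda)\ge P(\lambda,I)>0$.

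The computations themselves are trivial. The main obstacle is making the definition precise: interpreting ``$\V_\lambda[Q]$'' in the denominator as the quadratic form $\bv\mapsto\bv^*\V_\lambda[Q]\bv$, and handling vectors where that form vanishes. I would resolve the latter by the convention above, noting that it is consistent with the limit definition \eqref{eq:Vdef}.
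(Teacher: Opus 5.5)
Your proof is correct and is essentially the paper's argument: a kernel vector of $\Cl$ makes every $P(\lambda,Q)$ vanish, while for invertible $\Cl$ the bound $\|\Cl\bv\|_Q\ge c\|\bv\|_Q$ against a finite denominator gives $P(\lambda,Q)>0$ (you take $Q=I$ and cite Proposition~\ref{p:VAsCorrelationSum}, whereas the paper takes an arbitrary $Q\succ 0$). Your explicit $0/0:=0$ convention is a worthwhile refinement that the paper leaves implicit, and it is the right choice. In EDMD with a constant observable, at $\lambda=1$ every $\cl{\omega}$ annihilates the corresponding kernel vector, so the denominator vanishes there for every $Q$, and the ``if'' direction could fail under a convention that discards such $\bv$.
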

\begin{proof}[Proof of Proposition~\ref{p:PvsEigvals}]
	If $\lambda \in \Sigma$, there exists some $\bv \neq 0$ such that $\Cl \bv = \bm{0}$, so the infimum in \eqref{eq:Pdef} is zero for all $Q \succ 0$. 
	
	On the other hand, if $\lambda \notin \Sigma$, then we can fix any $Q \succ 0$, note that $C(\lambda)$ is invertible, so $\| C(\lambda) \bv \|_Q \geq c \| \bv\|_Q$ for some $c > 0$. We then just need to assure the denominator is bounded, which is assured by Proposition~\ref{p:SAsCorrelationSum}. 
\end{proof}

\subsection{The matrix operator $\S_\lambda$}

While $P(\lambda)$ itself is an object of the infinite-data limit that is not immediately applicable, it has a lot of interesting properties. In particular, it is associated to the following linear operator $\S_\lambda: \C^{N\times N} \to \C^{N\times N}$: 
\begin{equation} \S_\lambda[Q] :=\V_\lambda[(\Cl^{-1})^* Q \Cl^{-1}] \label{eq:Sdef}\end{equation}
This operator $\S_\lambda$ has the very nice property that it preserves both Hermitian matrices, and the cone of positive semi-definite matrices. When $\lambda \notin \Sigma$, we can set $Q' = \Cl^{*} Q \Cl$ to rewrite 
\begin{equation} P(\lambda) = \sup_{Q' \succ 0} \inf_{\bv \neq \bm{0}} \frac{\bv^* Q' \bv}{\bv^* \S_\lambda[Q'] \bv}. \label{eq:PdefS}\end{equation}

The following result follows from \eqref{eq:Sdef} and Proposition~\ref{p:VAsCorrelationSum}.
\begin{proposition}\label{p:SAsCorrelationSum}
	For all $\lambda \in E \backslash \Sigma$, $\S_\lambda$ is a bounded operator on Hermitian matrices.
	
	Furthermore, it maps positive semi-definite matrices to positive semi-definite matrices.
\end{proposition}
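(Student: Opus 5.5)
The plan is to write $\S_\lambda$ as a composition of two maps and check each claimed property factor by factor. For $\lambda \in E \setminus \Sigma$ the matrix $\Cl$ is invertible, so the congruence map $\mathcal{T}_\lambda : Q \mapsto (\Cl^{-1})^* Q \Cl^{-1}$ is well defined on $\C^{N\times N}$, and \eqref{eq:Sdef} reads $\S_\lambda = \V_\lambda \circ \mathcal{T}_\lambda$. Both assertions (boundedness on Hermitian matrices, and preservation of the positive semi-definite cone) are stable under composition. It therefore suffices to verify them for $\mathcal{T}_\lambda$ and to quote Proposition~\ref{p:VAsCorrelationSum} for $\V_\lambda$.

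First, $\mathcal{T}_\lambda$ is linear, and it is bounded in operator norm. The bound is
\[ \|(\Cl^{-1})^* Q \Cl^{-1}\| \le \|\Cl^{-1}\|^2 \|Q\|, \]
which is finite because $\Cl$ is invertible and we are in finite dimensions. It preserves Hermitian matrices, since
\[ \bigl((\Cl^{-1})^* Q \Cl^{-1}\bigr)^* = (\Cl^{-1})^* Q^* \Cl^{-1}. \]
It also preserves the positive semi-definite cone. To see this, take any $\bv$ and set $\bw = \Cl^{-1}\bv$; then
\[ \bv^*(\Cl^{-1})^* Q \Cl^{-1}\bv = \bw^* Q \bw \ge 0 \quad \text{whenever } Q \succeq 0. \]

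Second, Proposition~\ref{p:VAsCorrelationSum} states that $\V_\lambda$ is a bounded linear operator on Hermitian matrices that maps positive semi-definite matrices to positive semi-definite matrices, for every $\lambda \in E$. The composition $\V_\lambda \circ \mathcal{T}_\lambda$ therefore maps Hermitian matrices to Hermitian matrices, with operator norm at most $\|\V_\lambda\|\,\|\Cl^{-1}\|^2$. It also maps the positive semi-definite cone into itself, which is exactly the statement.

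There is no real obstacle here. The only point to watch is that $\V_\lambda$ was originally defined through \eqref{eq:Vdef} on semi-norms, i.e.\ on $Q \succeq 0$, and extended by linearity. One should confirm that the extension to all Hermitian matrices is the one used in Proposition~\ref{p:VAsCorrelationSum}. This holds via its correlation-sum formula, which is manifestly linear and Hermitian-preserving. With that settled, the composition argument goes through directly.
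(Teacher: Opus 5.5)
Your proposal is correct and matches the paper's argument. The paper simply states that the result follows from \eqref{eq:Sdef} and Proposition~\ref{p:VAsCorrelationSum}: $\S_\lambda$ is $\V_\lambda$ composed with the congruence $Q \mapsto (\Cl^{-1})^* Q \Cl^{-1}$, which is well defined because $\Cl$ is invertible off $\Sigma$. You have made explicit the routine facts the paper leaves implicit, namely that this congruence is bounded, preserves Hermitian matrices and preserves the positive semi-definite cone.
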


The second statement provides some very useful structure, as positive semi-definite matrices form a cone, and so $\S_\lambda$ is a {\it cone-preserving operator} \cite{Berman94}, analogous to a positive matrix. Many analogous results follow. 

Of particular utility for computation, we will find the following as a direct application of Theorem~\ref{t:PIsSpecRad}:
\begin{proposition}\label{p:PComputation}
	For all $\lambda \in E \backslash \Sigma$, $\frac{1}{P(\lambda)}$ is the spectral radius of $\S_\lambda$, and there exists $Q^*_\lambda \succeq 0$ such that $\S_\lambda[Q^*_\lambda] = \frac{1}{P(\lambda)} Q^*_\lambda$.
\end{proposition}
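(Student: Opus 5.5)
The plan is to treat $\S_\lambda$ as a cone-preserving linear map on the finite-dimensional real space $\mathcal{H}$ of $N\times N$ Hermitian matrices, and to apply the Krein--Rutman / Perron--Frobenius theory for the closed, pointed, solid cone $\mathcal{K}=\{Q\succeq 0\}$. By Proposition~\ref{p:SAsCorrelationSum}, $\S_\lambda$ is bounded and satisfies $\S_\lambda[\mathcal{K}]\subseteq\mathcal{K}$. The finite-dimensional Perron--Frobenius theorem for cone-preserving operators \cite{Berman94} then gives two facts. First, $\rho:=\specrad\S_\lambda$ is an eigenvalue. Second, it has an eigenvector $Q^*_\lambda\in\mathcal{K}\setminus\{0\}$ with $\S_\lambda[Q^*_\lambda]=\rho Q^*_\lambda$. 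This settles the existence part once we show $\rho=1/P(\lambda)$, using \eqref{eq:PdefS}.

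The first inequality is $P(\lambda)\le 1/\rho$. Fix $Q'\succ0$ and let $p=P(\lambda,\cdot)$ be the inner infimum in \eqref{eq:PdefS}. Then $Q'\succeq p\,\S_\lambda[Q']$, and since $\S_\lambda$ is order-preserving, iterating gives
\[ Q'\succeq p^k\S_\lambda^k[Q'] \quad\text{for all } k. \]
The set $\mathcal{K}$ is solid and $Q'$ is interior, so $Q^*_\lambda\preceq cQ'$ for some $c>0$. Applying $\S_\lambda^k$ yields
\[ \rho^kQ^*_\lambda=\S_\lambda^k[Q^*_\lambda]\preceq c\,\S_\lambda^k[Q']\preceq c\,p^{-k}Q'. \]
Hence $(p\rho)^kQ^*_\lambda\preceq cQ'$ for all $k$. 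Because $Q^*_\lambda\ne0$, this forces $p\rho\le1$. Taking the supremum over $Q'$ gives $P(\lambda)\le1/\rho$. The degenerate case $\rho=0$ is handled the same way: it gives $P(\lambda)=\infty$, consistent with the convention $1/0=\infty$, and then $Q^*_\lambda$ is any kernel vector in $\mathcal{K}$ (nilpotent case).

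The reverse inequality $P(\lambda)\ge1/\rho$ is the main obstacle, because the Perron eigenvector $Q^*_\lambda$ may be singular and so cannot be plugged directly into the supremum over $Q'\succ0$. I would regularise with a Neumann-type series. For $r>\rho$, set
\[ Q'_r=\sum_{k\ge0}r^{-k}\S_\lambda^k[I]=(I-r^{-1}\S_\lambda)^{-1}[I]. \]
This series converges, and $Q'_r\succeq I\succ0$. Moreover
\[ \S_\lambda[Q'_r]=r(Q'_r-I)\preceq rQ'_r, \]
so the infimum in \eqref{eq:PdefS} at $Q'_r$ is at least $1/r$. Letting $r\downarrow\rho$ gives $P(\lambda)\ge1/\rho$. (If $\rho=0$ then $\S_\lambda$ is nilpotent and the same construction with $r\to0$ gives $P=\infty$.)

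Combining the two inequalities gives $P(\lambda)=1/\specrad\S_\lambda$, and the Perron eigenvector from the first paragraph supplies $Q^*_\lambda$.
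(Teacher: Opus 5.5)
Your proof is correct and follows the same skeleton as the paper's proof (via Theorem~\ref{t:PIsSpecRad}). The Perron--Frobenius theorem for the cone of positive semi-definite matrices (Berman--Plemmons) supplies $Q^*_\lambda$, and $1/P(\lambda)$ is identified with the Collatz--Wielandt quantity $\inf\{\mu : \exists Q\succ 0,\ \S_\lambda[Q]\preceq \mu Q\}$. The difference is that the paper cites Akian et al.\ for the equality of this quantity with $\specrad\S_\lambda$, whereas you prove it directly: $P(\lambda)\le 1/\specrad\S_\lambda$ by comparing iterates against the Perron eigenmatrix, and the reverse inequality via the Neumann series $(I-r^{-1}\S_\lambda)^{-1}[I]$, which sidesteps the possible singularity of $Q^*_\lambda$ and makes the argument self-contained.
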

This is highly important as it allows us to characterise $P(\lambda)$ easily, both theoretically and computationally.


\section{Rigorous result location data-driven eigenvalues}\label{s:ProbabilisticBounds}

In Section~\ref{ss:Indicator}, we argued that $P(\lambda)$ can be used as a way to estimate the asymptotic likelihood that $\lambda$ lies in the data-driven spectrum $\hat \Sigma$, which is a function of the random sample.

We can formalise this using Gaussian bounds for matrix norms as the engine \cite{Tropp15, Tropp11}. Unfortunately, for technical reasons this requires us to consider a modification of $P(\lambda)$ to also consider variation in the adjoint characteristic matrices $\cl{\omega}^*$. This leads to an indicator $P_\sym(\lambda)$, which tends to be close to $P(\lambda)$ but is much harder to compute.

%

\begin{align} P_\sym(\lambda) &= \sup_{Q \succ 0} P_\sym(\lambda,Q),\ & P_\sym(\lambda,Q) &:= \min \left\{  \inf_{\bv \neq \bm{0}} \frac{\bv^* Q \bv}{\bv^* \S_\lambda[Q] \bv}, \inf_{\bv \neq \bm{0}} \frac{\bv^* Q^{-1} \bv}{\bv^* \S_\lambda^*[Q^{-1}] \bv} \right\}\label{eq:Psymdef}\end{align}
where $\S_\lambda^*$ is the adjoint operator of $\S_\lambda$:
\begin{equation}
	\S_\lambda^*[Q] := \lim_{M\to\infty}  M \bE\left[ \Cl^{-1} \tCl Q (\Cl^{-1} \tCl)^* \right].\label{eq:Sstardef}
\end{equation}

The same basic properties around eigenvalues hold for $P_\sym$ as for $P$:
\begin{proposition}\label{p:PsymEigval}
	$P_\sym(\lambda) \leq P(\lambda)$, and $P_\sym(\lambda) = 0$ if and only if $\lambda \in \sigma(K)$.
\end{proposition}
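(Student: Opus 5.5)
The first claim, $P_\sym(\lambda)\le P(\lambda)$, needs only a comparison of the definitions. Take $\lambda\notin\Sigma$. For every $Q\succ 0$, $P_\sym(\lambda,Q)$ is the minimum of two terms, and the first of them is exactly the Rayleigh-type quotient in \eqref{eq:PdefS}. Hence $P_\sym(\lambda,Q)\le \inf_{\bv\neq \bm 0} \bv^*Q\bv/\bv^*\S_\lambda[Q]\bv$. Taking the supremum over $Q\succ0$ and using \eqref{eq:PdefS} (the map $Q\mapsto \Cl^*Q\Cl$ is a bijection of the positive definite cone) gives $P_\sym(\lambda)\le P(\lambda)$.

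For $\lambda\in\Sigma$ the operator $\S_\lambda$ is undefined, since $\Cl^{-1}$ does not exist. I would read \eqref{eq:Psymdef} through the original form \eqref{eq:Pdef}, i.e. with the first term equal to $P(\lambda,Q)$ for $Q$ in the un-substituted variables. Then $P_\sym(\lambda,Q)\le P(\lambda,Q)$ holds pointwise, and the inequality persists in this case as well. Here $\sigma(K)$ in the statement is identified with $\Sigma$, which is how the Koopman example in Section~\ref{ss:KoopmanContext} interprets it.

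The "if" direction of the second claim follows from the first. If $\lambda\in\Sigma$, Proposition~\ref{p:PvsEigvals} gives $P(\lambda)=0$, so $0\le P_\sym(\lambda)\le P(\lambda)=0$.

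The converse is the only real content. Take $\lambda\notin\Sigma$ and exhibit one $Q$ with $P_\sym(\lambda,Q)>0$; choose $Q=I$.
\begin{itemize}
\item By Proposition~\ref{p:SAsCorrelationSum}, $\S_\lambda$ is bounded, so $\S_\lambda[I]\preceq \|\S_\lambda\|\,I$. This makes the first infimum at least $1/\|\S_\lambda\|>0$.
\item For the second term, $Q^{-1}=I$. From \eqref{eq:Sstardef}, $\S^*_\lambda$ is defined as a limit of $M\bE[\Cl^{-1}\tCl(\cdot)\tCl^*(\Cl^{-1})^*]$, and this limit is the Hilbert–Schmidt adjoint of $\S_\lambda$. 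It is therefore also bounded, and it preserves positive semi-definiteness: each prelimit expression has the form $A Q A^*$, and the cone is closed.
\item So $\S_\lambda^*[I]\preceq\|\S_\lambda^*\| I$, and the second infimum is at least $1/\|\S^*_\lambda\|>0$.
\end{itemize}
Hence $P_\sym(\lambda)\ge P_\sym(\lambda,I)>0$. If $\S_\lambda^*[I]=0$ or $\S_\lambda[I]=0$, the corresponding quotient is $+\infty$, consistent with $P$ taking values in $[0,\infty]$.

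The main obstacle is justifying that the limit in \eqref{eq:Sstardef} exists and equals the adjoint of $\S_\lambda$. I would handle it exactly as in the proof of Proposition~\ref{p:VAsCorrelationSum}: expand $M\bE[\tCl Q\tCl^*]$ as a double sum of correlations. Assumption~\ref{as:birkhoff}, applied to the linear functionals $h(C)=\bv^*\Cl^{-1}C\bw$ (and polarised), then gives convergence to a Ces\`aro correlation sum. A trace pairing $\tr(\S^*_\lambda[A]B)=\tr(A\,\S_\lambda[B])$ verifies the adjoint relation term by term.
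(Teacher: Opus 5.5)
Your proposal is correct and follows the paper's proof essentially step for step. You get $P_\sym\le P$ by comparing \eqref{eq:PdefS} with \eqref{eq:Psymdef}, the ``if'' direction from Proposition~\ref{p:PvsEigvals}, and the ``only if'' direction by fixing a single $Q\succ0$ and using only that $\S_\lambda[Q]$ and $\S_\lambda^*[Q^{-1}]$ are finite. Your choice $Q=I$, and your derivation of the limit in \eqref{eq:Sstardef} from Assumption~\ref{as:birkhoff} (it also follows directly from the trace duality with $\S_\lambda$), spell out the ``appropriate modification'' of Proposition~\ref{p:SAsCorrelationSum} that the paper leaves implicit.
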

\begin{proof}
	$P_\sym(\lambda) \leq P(\lambda)$ follows by comparing \eqref{eq:PdefS} and \eqref{eq:Psymdef}. For the second part, Proposition~\ref{p:PvsEigvals} gives ``if'' so that we only need to prove the ``only if'' direction. Here, we can fix any $Q \succ 0$, and need only that $\S_\lambda[Q]$ and $\S_\lambda^*[Q^{-1}]$ are well-defined, which we can get from Proposition~\ref{p:SAsCorrelationSum} and an appropriate modification thereof for $\S_\lambda^*$.
\end{proof}
In practice it seems that $P(\lambda) \gtrapprox 0.3P_\sym(\lambda)$ for the most part, but near eigenvalues this ratio appears to to $1$ (see Section~\ref{ss:Numerics1DTrue}). The major benefit of studying $P$ over $P_\sym$ is that it can be estimated via a power method (see Section~\ref{s:Algorithms}), making it much easier to compute.

The interest to us of $P_\sym(\lambda)$ lies in the following theorem, proved in Appendix~\ref{a:EigsInCurve}. 
\begin{theorem}	\label{t:EigsInCurve}
	Suppose that our samples are independent. 
	
	Let $F \subset E$ be an closed set whose boundary $\partial F$ is a rectifiable curve and, $P_\sym(\lambda) > P^*$ for all $\lambda \in \partial F$.
	
	Define $\#(\Sigma,F)$ to be the number of elements of $\Sigma$ in $F$, counting algebraic multiplicity, and similarly for $\hat \Sigma$.
	
	Then there exists a constant $C$ such that
	\begin{equation} \bP\left(\#(\Sigma,F) \neq \#(\hat \Sigma,F)\right) \leq C M \exp\left\{-  \frac{1}{1 + P^* R_{\partial F}/3}\cdot\frac{P^*}{2}M\right\} \label{eq:EigsInCurveLimit} \end{equation}
	where
	\begin{equation} R_{\partial F} = \sup_{\lambda \in \partial F} \inf_{\substack{Q\succ 0\\ P_\sym(\lambda,Q) \geq P^*}} \esssup_{\omega \in \Omega} \left\| \Cl^{-1} \cl{\omega} - I \right\|_Q < \infty \label{eq:RpartialF}\end{equation}
\end{theorem}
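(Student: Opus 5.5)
We plan to reduce eigenvalue counting to a perturbation (Rouché-type) argument on the boundary curve, and to control the perturbation uniformly along $\partial F$ with a matrix Bernstein inequality \cite{Tropp15, Tropp11}.

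\textbf{Step 1: deterministic counting criterion.} Consider the homotopy $C_\lambda(s) = \Cl + s\tCl$, $s\in[0,1]$. Suppose that for every $\lambda\in\partial F$ we have $\|\Cl^{-1}\tCl\|_{Q_\lambda} < 1$ for some $Q_\lambda \succ 0$. Then $C_\lambda(s) = \Cl(I + s\Cl^{-1}\tCl)$ is invertible on $\partial F$ for all $s$. The Riesz projection $\frac{1}{2\pi i}\oint_{\partial F} C_z(s)^{-1}\,\d z$, taken in the form of the logarithmic-derivative count $\frac{1}{2\pi i}\oint \tr(C_z(s)^{-1}\partial_z C_z(s))\,\d z$, is an integer-valued continuous function of $s$, by analyticity in $\lambda$. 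It is therefore constant, so $\#(\Sigma,F)=\#(\hat\Sigma,F)$. Hence it suffices to bound
\[ \bP\bigl(\exists\lambda\in\partial F:\ \|\Cl^{-1}\tCl\|_{Q_\lambda}\geq 1\bigr). \]

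\textbf{Step 2: pointwise tail bound.} Fix $\lambda\in\partial F$ and pick $Q$ nearly attaining the infimum in \eqref{eq:RpartialF}, with $P_\sym(\lambda,Q)\geq P^*$. Write
\[ Q^{1/2}\Cl^{-1}\tCl Q^{-1/2} = \frac1M\sum_m X_m, \qquad X_m = Q^{1/2}\bigl(\Cl^{-1}\cl{\omega_m} - I\bigr)Q^{-1/2}, \]
which is a sum of independent, mean-zero, non-Hermitian matrices with $\|X_m\|\leq R$. The rectangular matrix Bernstein inequality then requires bounds on the two variance proxies $\|\sum\bE X_mX_m^*\|$ and $\|\sum\bE X_m^*X_m\|$.
\begin{itemize}
\item The term $\bE X^*X = Q^{-1/2}\S_\lambda[Q]Q^{-1/2}$ has norm at most $1/P^*$, by the first infimum in \eqref{eq:Psymdef}.
\item The term $\bE XX^* = Q^{1/2}\S_\lambda^*[Q^{-1}]Q^{1/2}$ has norm at most $1/P^*$, by the second infimum.
\end{itemize}
This is exactly why $P_\sym$ rather than $P$ appears. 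With $M$ independent samples, the variance proxy is $M/P^*$. Bernstein at threshold $t=M$ (that is, $\|\frac1M\sum X_m\|\geq1$) gives
\[ 2N\exp\!\left(-\frac{M^2/2}{M/P^* + RM/3}\right) = 2N\exp\!\left(-\frac{P^*M/2}{1+P^*R/3}\right), \]
which is the exponent in \eqref{eq:EigsInCurveLimit}. For slack we prove this at threshold $1-\delta$, losing only a constant factor that can be absorbed.

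\textbf{Step 3: union over the curve.} This is the main obstacle: the norm $Q_\lambda$ varies with $\lambda$, and the event in Step 1 involves a continuum of $\lambda$. The plan is as follows.
\begin{itemize}
\item Discretise $\partial F$, which is rectifiable, into $O(M)$ points $\lambda_j$ with spacing $\sim 1/M$. This produces the factor $M$ in the bound.
\item On each arc use the fixed norm $Q_j := Q_{\lambda_j}$. Since $\lambda\mapsto\Cl^{-1}\tCl$ is analytic near $\partial F$ (where $\Cl$ is invertible, and $\hCl$ is uniformly Lipschitz by uniform boundedness and analyticity of $\cl{\omega}$ and Cauchy estimates), we have $\|\Cl^{-1}\tCl - C_{\lambda_j}^{-1}\tilde C_{\lambda_j}\|_{Q_j} \leq L|\lambda-\lambda_j|$ with $L$ deterministic.
\item With spacing small enough that $L|\lambda-\lambda_j|<\delta$, the event $\|C_{\lambda_j}^{-1}\tilde C_{\lambda_j}\|_{Q_j}<1-\delta$ at every node implies the Step 1 hypothesis on all of $\partial F$, with $Q_\lambda := Q_j$ locally. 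Step 1 only needs some norm at each $\lambda$, so the piecewise choice is fine.
\end{itemize}

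The remaining delicacy is making the $Q_j$ uniformly well-conditioned, so that $L$ is uniform and $C$ does not depend on $M$. We would handle this by compactness: $\partial F$ is compact and the strict inequality $P_\sym>P^*$ is stable under the small perturbations of $Q$ needed. Near-optimal $Q$ can then be chosen from a finite family with bounded condition number, at the cost of replacing $R_{\partial F}$ by $R_{\partial F}+\epsilon$; we absorb $\epsilon$ by working with slightly smaller $P^*$, or bound it directly by continuity. A union bound over the $O(M)$ nodes then yields \eqref{eq:EigsInCurveLimit}.
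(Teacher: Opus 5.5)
Your proposal is correct and is essentially the paper's proof. It has the same three ingredients: the homotopy/argument-principle counting criterion (Proposition~\ref{p:CountingEigs}); the rectangular matrix Bernstein bound, whose two variance proxies are exactly the $\S_\lambda$ and $\S_\lambda^*$ terms controlled by $P_\sym(\lambda,Q)$ (Proposition~\ref{p:BernsteinBound}); and an $O(M)$-point net on $\partial F$ with deterministic Lipschitz interpolation in a finite family of norms, followed by a union bound (Lemmas~\ref{l:PsymPerturbation}--\ref{l:FiniteNumberofQ}). The one soft spot is your final absorption step, and the paper shares it when it silently bounds $R(\lambda_j,Q_{k(j)})$ by $R_{\partial F}$: replacing $R_{\partial F}$ by $R_{\partial F}+\epsilon$ (or $P^*$ by something smaller) shifts the exponent by $O(\epsilon M)$, so to keep $C$ independent of $M$ you would need near-optimal $Q$'s with $\epsilon=O(1/M)$ and uniformly bounded condition number.
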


Broadly speaking, this is to say that the deviations of the data-driven eigenvalues from their infinite-data values are more or less controlled by level sets of $P_\sym(\lambda)$. We study this theorem with examples in Sections~\ref{s:VAR} and \ref{ss:Numerics1DTrue}.

%

If the size of $\cl{\omega}$ is relatively uniform, we can replace the $P^* R_{\partial F}/3$ part of the fraction in \eqref{eq:EigsInCurveLimit}, which is a maximum and therefore difficult to estimate, with an $L^2(\mu)$ estimate. This leads to the following rule of thumb justified in Appendix~\ref{a:RuleOfThumb}. It is most accurate when the $\cl{\omega}$ are rank-one and relatively uniform in size, such as in EDMD:
\begin{quotation}
	\it Suppose $P_\sym(\lambda) \geq P^*$ on the boundary of $F$.
	
	Then if $M \gg 2/P^*, 2\sqrt{N/P^*}$, the number of eigenvalues of $\hat \Sigma$ in $F$ is highly likely to be the same as that of $\Sigma$ in $F$.  
\end{quotation}

\section{Statistical test for true eigenvalues}\label{s:StatisticalTest}

The sampling pseudospectrum is a very helpful concept to understand spectral effects of approximation from finite data. Now, we would like to apply it in a context where we don't have access to the true process, but rather a finite amount of data. To do this, we will replace the infinite-data objects by finite-data estimators. In particular, we will set
\[ \hat P(\lambda) = \sup_{Q \succ 0} \inf_{\bv \neq \bm{0}} \frac{\| \hCl \bv\|_Q^2}{\bv^* \hat\V_\lambda[Q] \bv},\]
where $\hat\V_\lambda[Q]$ estimates $\V_\lambda[Q]$. We will make the following assumptions on $\hat\V_\lambda[Q]$:
\begin{assumption}\label{as:Vhat}
	$\hat\V_\lambda$ has the following properties:
\begin{itemize}
	\item $\hat\V_\lambda[Q]$ is an asymptotically consistent estimator of $\V_\lambda[Q]$ (see \eqref{eq:Vdef}) in that \[\lim_{M\to\infty} \hat\V_\lambda[Q] - \V_\lambda[Q] = 0 \textrm{ in probability}.\]
	\item $\hat\V_\lambda[Q]$ is linear in $Q$.
	\item $\hat\V_\lambda[Q]\succeq 0$ if $Q \succeq 0$.
\end{itemize}
\end{assumption}
The construction of good estimators satisfying Assumption~\ref{as:Vhat} is given in Section~\ref{s:Variance}.

$\hat P(\lambda)$ has some basic properties analogous to $P(\lambda)$:
\begin{proposition}\label{p:PhatComputation}
	$\hat P(\lambda) = 0$ if and only if $\lambda \in \hat\Sigma$.
	
	Furthermore, for all $\lambda \in E \backslash \hat \Sigma$, $\hat P(\lambda)$ is reciprocal of the spectral radius of $\hat\S_\lambda$:
	\[ \hat\S_\lambda[Q] := \hat \V[(\hCl^{-1})^* Q \hCl^{-1}]. \] 
\end{proposition}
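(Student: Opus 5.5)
The plan is to repeat the arguments behind Proposition~\ref{p:PvsEigvals} and Proposition~\ref{p:PComputation}, replacing $\Cl$, $\V_\lambda$, $\S_\lambda$ by $\hCl$, $\hat\V_\lambda$, $\hat\S_\lambda$. The only properties of the hatted objects we use are those in Assumption~\ref{as:Vhat}: linearity, preservation of the positive semi-definite cone, and finiteness. These are exactly the structural features the infinite-data argument relies on.

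\textbf{Characterisation of zeros.} If $\lambda\in\hat\Sigma$, pick $\bv\neq\bm 0$ with $\hCl\bv=\bm 0$. Then the numerator $\|\hCl\bv\|_Q^2$ vanishes for every $Q$. The denominator is nonnegative by the cone property, so the quotient is $0$ (with the convention $0/0=0$, the same one implicit in Proposition~\ref{p:PvsEigvals}). Hence $\hat P(\lambda)=0$. Conversely, if $\lambda\notin\hat\Sigma$, fix $Q=I$. Invertibility of $\hCl$ gives $\|\hCl\bv\|^2\ge c\|\bv\|^2$ with $c>0$. Since $\hat\V_\lambda[I]$ is a finite Hermitian matrix, $\bv^*\hat\V_\lambda[I]\bv\le\|\hat\V_\lambda[I]\|\,\|\bv\|^2$. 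Therefore $\hat P(\lambda)\ge c/\|\hat\V_\lambda[I]\|>0$, and if $\hat\V_\lambda[I]=0$ we get $+\infty$.

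\textbf{Spectral radius characterisation.} For $\lambda\notin\hat\Sigma$, substitute $Q'=\hCl^*Q\hCl$, which is a bijection of the positive definite cone, to obtain the analogue of \eqref{eq:PdefS}:
\[\hat P(\lambda)=\sup_{Q'\succ0}\inf_{\bv\neq\bm0}\frac{\bv^*Q'\bv}{\bv^*\hat\S_\lambda[Q']\bv}=\sup_{Q'\succ0}\sup\{t\ge 0: tQ'\preceq^{}\ \text{is dominated, i.e. } t\,\hat\S_\lambda[Q']\preceq Q'\}.\]
The operator $\hat\S_\lambda$ is linear and maps the PSD cone into itself, because it is the composition of the congruence $Q\mapsto(\hCl^{-1})^*Q\hCl^{-1}$ with $\hat\V_\lambda$. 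The claim is then the Collatz--Wielandt formula for a cone-preserving operator on the closed, solid, pointed cone of PSD matrices:
\[1/\specrad\hat\S_\lambda=\sup\{t:\exists Q\succ0,\ t\,\hat\S_\lambda[Q]\preceq Q\}.\]
This is precisely the content of Theorem~\ref{t:PIsSpecRad} as applied in Proposition~\ref{p:PComputation}. I would invoke that theorem directly, since its proof should only use these cone properties and not the particular form of $\V_\lambda$.

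\textbf{Fallback if Theorem~\ref{t:PIsSpecRad} is not abstract enough.} I would argue both inequalities by hand. Write $\rho=\specrad\hat\S_\lambda$.

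For $\hat P\le 1/\rho$: Krein--Rutman (finite-dimensional Perron--Frobenius for cones) gives $Q^*\succeq0$, $Q^*\neq0$, with $\hat\S^*_\lambda[Q^*]=\rho Q^*$ for the adjoint under the trace pairing. If $t\,\hat\S_\lambda[Q]\preceq Q$ with $Q\succ0$, pairing with $Q^*$ gives $t\rho\tr(Q^*Q)\le\tr(Q^*Q)$, and $\tr(Q^*Q)>0$, so $t\le1/\rho$.

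For $\hat P\ge1/\rho$: for $\epsilon>0$ take
\[Q_\epsilon=\sum_{k\ge0}(\rho+\epsilon)^{-k}\hat\S_\lambda^k[I]\succ0.\]
This series converges and satisfies $\hat\S_\lambda[Q_\epsilon]\preceq(\rho+\epsilon)Q_\epsilon$, giving $t=1/(\rho+\epsilon)$. The case $\rho=0$ gives $\hat P=\infty$ under the convention $1/0=\infty$.

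The main obstacle is this cone-theoretic step, in particular making sure the sup over strictly positive definite $Q$ (rather than semidefinite) attains the bound. The Neumann-series construction above handles that.
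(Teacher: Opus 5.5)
Your proposal is correct and follows the paper's (implicit) route. The zero characterisation is the argument of Proposition~\ref{p:PvsEigvals} with $\hCl,\hat\V_\lambda$ in place of $\Cl,\V_\lambda$. The spectral-radius identity is Theorem~\ref{t:PIsSpecRad} applied with $L=\hCl$ and $\V=\hat\V_\lambda$, whose only hypotheses (linearity and cone preservation) are exactly what Assumption~\ref{as:Vhat} supplies. Your fallback, with $\rho=\specrad\hat\S_\lambda$, is a valid self-contained substitute for the paper's appeal to Akian et al.\ in the proof of Theorem~\ref{t:PIsSpecRad}: a dual Perron eigenmatrix gives $\hat P(\lambda)\le 1/\rho$, and the Neumann series $\sum_{k\ge0}(\rho+\epsilon)^{-k}\hat\S_\lambda^k[I]\succ0$ gives $\hat P(\lambda)\ge 1/(\rho+\epsilon)$. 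Your substitution $Q'=\hCl^*Q\hCl$ also lands on $\hat\S_\lambda$ directly, whereas the paper's substitution in $\bv$ actually produces the conjugate operator $Q\mapsto(L^{-1})^*\V[Q]L^{-1}$, which has the same spectral radius.
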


Furthermore, the following theorem, proved in Appendix~\ref{a:StatTest}, tells us how $\hat P(\lambda)$ behaves as $M \to \infty$. It tells us that $\hat P(\lambda)$ is a consistent estimator of $P(\lambda)$, and provides a distribution for $M \hat P(\lambda)$ in the generic case of $P(\lambda) = 0$ (i.e. $\lambda \in \Sigma$).
\begin{theorem}\label{t:StatTest}
	The following hold:
	\begin{itemize}
			\item For all $\lambda \in E$,
						\[ \lim_{M\to\infty} \hat P(\lambda) = P(\lambda) \textrm{ almost surely.} \]
		\item\label{tp:StatTestb} For all $\lambda \in \Sigma$ with geometric multiplicity $1$, we have the tail bound
		\[ \lim_{M\to\infty} \bP\left(M \hat P(\lambda) > c\right) \leq \max\left\{\bP(\chi_1^2 > c),\bP(\tfrac{\chi_2^2}{2} > c)\right\} \]
		 for all $c > 0$, where $\chi_j^2$ is the chi-square variable with $j$ degrees of freedom.
	\end{itemize}
\end{theorem}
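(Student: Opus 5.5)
The plan is to use the spectral-radius characterisation for the first bullet, and to plug an explicit near-kernel test vector into the sup–inf for the second.

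\textbf{First bullet.} The ergodic theorem gives $\hCl \to \Cl$ almost surely. By Assumption~\ref{as:Vhat}, $\hat\V_\lambda$ is linear on a finite-dimensional space, so convergence on a basis gives $\hat\V_\lambda \to \V_\lambda$ as operators. I will take this convergence to be almost sure, possibly along subsequences; a purely in-probability convergence would only give the first bullet in probability.

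For $\lambda \notin \Sigma$, the matrix $\hCl$ is eventually invertible and $\hCl^{-1} \to \Cl^{-1}$. Hence $\hat\S_\lambda \to \S_\lambda$ in operator norm. The spectral radius is continuous at operators whose spectral radius is positive; here $\S_\lambda$ has $1/P(\lambda)>0$ as an eigenvalue. So Proposition~\ref{p:PComputation} and Proposition~\ref{p:PhatComputation} give $\hat P(\lambda) \to P(\lambda)$. If $\S_\lambda$ has zero spectral radius, so that $P(\lambda)=\infty$, upper semicontinuity of the spectral radius still gives $\hat P(\lambda)\to\infty$.

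For $\lambda\in\Sigma$, I will deduce $\hat P(\lambda)\to 0$ from the quantitative bound in the second bullet, extended to any geometric multiplicity by choosing one kernel vector. That bound shows $M\hat P(\lambda)$ is $O(1)$ in the limit, so $\hat P(\lambda) \to 0$.

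\textbf{Second bullet: test vector.} Let $\bv_0$ span $\ker\Cl$ and let $\bm u$ span $\ker \Cl^*$, both one-dimensional. Let $G = \sqrt M\,\tCl$, which is asymptotically Gaussian by Assumption~\ref{as:birkhoff}. Fix any $Q\succ 0$ and take the test vector
\[
\bv = \bv_0 + M^{-1/2}\bm x, \qquad \sqrt M\,\hCl \bv = G\bv_0 + \Cl \bm x + O(M^{-1/2}).
\]
Choose $\bm x$ so that the residual $\br = G\bv_0 + \Cl\bm x$ is $Q$-orthogonal to $\im \Cl$. This forces $\br = \alpha Q^{-1}\bm u$. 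Applying $\bm u^*$ gives $\alpha = \xi/(\bm u^*Q^{-1}\bm u)$, where $\xi = \bm u^* G \bv_0$. Hence
\[
\|\br\|_Q^2 = \frac{|\xi|^2}{\bm u^*Q^{-1}\bm u}.
\]

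\textbf{Second bullet: denominator.} By \eqref{eq:Vdef}, the limiting denominator is $\bv_0^*\V_\lambda[Q]\bv_0 = \lim_M \bE\|G\bv_0\|_Q^2$. Cauchy--Schwarz gives $|\bm u^*\bm z|^2 \le \|\bm z\|_Q^2\,\bm u^*Q^{-1}\bm u$, so
\[
\bv_0^*\V_\lambda[Q]\bv_0 \;\ge\; \frac{\bE|\xi|^2}{\bm u^*Q^{-1}\bm u}.
\]
Combining the two displays, the Rayleigh quotient of $\bv$ is at most $|\xi|^2/\bE|\xi|^2$, up to $o(1)$. This bound is independent of $Q$, so it also bounds the supremum:
\[
M\hat P(\lambda) \le \frac{|\xi|^2}{\bE|\xi|^2} + o_P(1).
\]
In the limit $\xi$ is a centred complex Gaussian. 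Writing it in principal axes, $|\xi|^2/\bE|\xi|^2 = s Z_1^2 + (1-s) Z_2^2$ with $s\in[\tfrac12,1]$ and $Z_1,Z_2$ independent standard normals. Its tail interpolates between $\chi^2_1$ (at $s=1$) and $\tfrac12\chi^2_2$ (at $s=\tfrac12$). I will bound the tail of $sZ_1^2+(1-s)Z_2^2$ by the worse endpoint, using the fact that this tail is quasi-convex in $s$ for each fixed $c$; this needs a short calculus check.

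\textbf{Main obstacle.} The delicate step is making the $o(1)$ terms uniform over $Q\succ 0$. Both $Q$ and the test vector depend on the sample, and $\bm x$ must stay bounded. Since $\bm x = -\Cl^{+}(G\bv_0-\br)$ is controlled by $\|G\|$ and $\Cl$ restricted off its kernel, $\bm x$ is bounded, although this still has to be made uniform in the $Q$-dependent residual. Replacing $\hat\V_\lambda$ by $\V_\lambda$ in the denominator also has to hold uniformly in $Q$, with a relative rather than absolute error. I would handle this by normalising so that $\bm u^*Q^{-1}\bm u = 1$. I would then write the error as $|\bv^*(\hat\V_\lambda-\V_\lambda)[Q]\bv| \le \epsilon_M\,\bv^*\V_\lambda[Q]\bv$, comparing positive maps on the cone. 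If that comparison fails in degenerate directions, my fallback is to add a vanishing regulariser $\delta\,\V_\lambda[I]$ and send $\delta\to0$ after $M\to\infty$.
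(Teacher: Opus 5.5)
Your first bullet for $\lambda\notin\Sigma$ follows the paper's argument: $\hCl^{-1}\to\Cl^{-1}$, hence $\hat{\S}_\lambda\to\S_\lambda$, plus continuity of the spectral radius (which in finite dimensions holds at every operator). The second bullet has a genuine gap at exactly the step you flag, and it is not mere bookkeeping.

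Your fixed-$Q$ computation is correct, including the Cauchy--Schwarz step. But $M\hat P(\lambda)$ is a supremum over the non-compact cone $\{Q\succ0\}$, and your test vectors do not control it uniformly. Normalising $\bm{u}^*Q^{-1}\bm{u}=1$ does not bound $\bm{x}_Q$, because the residual $\xi Q^{-1}\bm{u}$ can tilt arbitrarily far into $\im\Cl$. Concretely, take $N=2$, $\Cl=\operatorname{diag}(a,0)$ with $a\neq0$, $\bv_0=\bm{u}=\bm{e}_2$, and
\[
Q^{-1}=\begin{pmatrix} w^2+\delta & w\\ w & 1\end{pmatrix},\qquad \|\bm{g}\|_Q^2=\frac{|g_1-wg_2|^2}{\delta}+|g_2|^2 .
\]
Then $\xi=G_{22}$, and your correction is $\bm{x}=a^{-1}(G_{22}w-G_{12})\bm{e}_1$. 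This is of order $w$, so $M^{-1/2}\bm{x}$ is not small once $w\gtrsim\sqrt M$. Use the exact $\V_\lambda$ in the denominator, let $\delta\to0$, and then let $w\to\infty$ at fixed $M$. The Rayleigh quotient of $\bv_0+M^{-1/2}\bm{x}$ then tends to $|G_{21}|^2/\lim_M\bE|G_{21}|^2$, not to $|G_{22}|^2/\lim_M\bE|G_{22}|^2=|\xi|^2/\bE|\xi|^2$. So the supremum over $Q$ of your test quotients is not bounded by $|\xi|^2/\bE|\xi|^2+o_P(1)$.

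The inequality you want is true, but getting it from test vectors would need a different, $Q$-dependent choice of $\bv$, with error control uniform over a non-compact family. Your suggested remedies (a relative comparison of $\hat{\V}_\lambda$ with $\V_\lambda$, or a vanishing regulariser) do not address this, since the failure already occurs with $\V_\lambda$ itself.

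The missing idea is to bound $\hat P(\lambda)$ from above on the dual side, where no uniformity in $Q$ is needed. By Theorem~\ref{t:PIsSpecRad} and Proposition~\ref{p:PhatComputation}, $1/\hat P(\lambda)=\specrad\hat{\S}_\lambda$. Upper bounds on $\hat P$ then come either from a single certificate $Q$ (part (c) of that theorem) or from continuity of the spectral radius.

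The paper uses a Schur complement to show $\epsilon'\hCl^{-1}\to\br\bl^*$, where $\br=\bv_0$ and $\bl=\bm{u}$ in your notation and $\epsilon'=\bl^*\tCl\br+o_P(M^{-1/2})$. Hence $|\epsilon'|^2\hat{\S}_\lambda$ converges to the rank-one cone map $Q\mapsto(\br^*Q\br)\,\V_\lambda[\bl\bl^*]$. Its spectral radius is $\sigma_\epsilon^2=\br^*\V_\lambda[\bl\bl^*]\br$, which is your $\bE|\xi|^2$. This gives $M\hat P(\lambda)=M|\epsilon'|^2/\sigma_\epsilon^2+o_P(1)$, an exact limit law rather than only an upper bound. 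The tail estimate then follows from Sz\'ekely and Bakirov's extremal bound for Gaussian quadratic forms of rank at most $2$; your quasi-convexity check would need to reproduce that result.

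Finally, your treatment of the first bullet at $\lambda\in\Sigma$ runs through the flawed step. Even if repaired, it would give only convergence in probability, not almost sure convergence.
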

We conjecture that for $\dim \ker C_\lambda=d > 1$, the right-hand bound in part \ref{tp:StatTestb} will be $\max_{i\leq 2d} \bP(\tfrac{\chi_i^2}{i} > c)$, which is identical to the above bound for $c \geq 1.3$.

This result suggests that statistically test for the location of  true eigenvalues $\lambda \in \Sigma$. This is because as $M \to \infty$, we can see one of two things: if $\lambda \in \Sigma$, $M \hat P(\lambda)$ converges to a distribution upper-bounded by something known, whereas if $\lambda \notin \Sigma$, $M \hat P(\lambda)$ grows linearly almost surely (i.e., is distributed far away from the first distribution). We can therefore test the null hypothesis $\lambda \in \Sigma$ using $p$-value
\[ p = \min\left\{ 1 - F_{\chi_1}(M \hat P(\lambda)),  1 - F_{\chi_2}(2M \hat P(\lambda)) \right\}. \]
where $F_D$ is the cumulative distribution function of the distribution $\lambda$.

In dynamical systems, these kinds of statistical tests can be used to rule in or out the existence of certain kinds of patterns for the operator, for example resonances of certain frequencies or decay rates. The performance of this theorem is investigated in Sections~\ref{ss:Numerics1DData}.

We can draw a longer bow here. Because $\hat P(\lambda)$ consistently estimates $P(\lambda)$, away from true eigenvalues the ratio of $M \hat P(\lambda)$ to $M P(\lambda)$ is close to $1$. If $M \hat P(\lambda)$ is large (i.e. substantially greater than $1$) on the boundary of a set $F$,  it is therefore also likely that $M P(\lambda)$ is large on $\partial F$, and so by Theorem~\ref{t:EigsInCurve} the number of true and finite-data eigenvalues inside $F$ are likely to match. We therefore expect our estimate of the sampling pseudospectrum $\hat P(\lambda)$ to have a similar eigenvalue-counting property as the sampling pseudospectrum itself. This behaviour is investigated in Sections~\ref{ss:Numerics1DData} and~\ref{s:NumericsL63}.

\section{Computing $P(\lambda)$ and $\hat P(\lambda)$}\label{s:Algorithms}

We will see in this section that the great strength of the indicators $P(\lambda)$ and $\hat P(\lambda)$ (as compared to, say $P_\sym(\lambda)$) is that they are easy to calculate and can even be approximated with rigorous guarantees. The following theorem, proved in Appendix~\ref{a:PIsSpecRad}, explains:

\begin{theorem}\label{t:PIsSpecRad}
	Consider an optimisation problem of the form
	\begin{equation} P = \sup_{Q \succ 0} \inf_{\bv \ne 0} \frac{\| L \bv \|_Q^2}{\bv^* \V[Q] \bv}  \label{eq:GenOptimProblem}\end{equation}
	where $L$ is an invertible matrix, and $\V$ is linear and maps positive semi-definite matrices to positive semi-definite matrices.
	
	Let 
		\[ \S: Q \mapsto \V[(L^{-1})^* Q L^{-1}]\]
		be an operator on Hermitian matrices.
	Then
	\begin{enumerate}[a.]
		\item  \label{tt:PIsSpecRad-cone} $\S$ preserves the cone of positive semi-definite matrices, and therefore it has a leading non-negative eigenvalue equal to its spectral radius $\specrad \S \in [0,\infty)$. This eigenvalue must have at least one positive semi-definite eigenmatrix.
		\item \label{tt:PIsSpecRad-specrad} The spectral radius $\specrad S = 1/P$
		\item \label{tt:PIsSpecRad-bounds} Let $\sigma_Q$ be the generalised spectrum of $Q$ against $\S[Q]$, i.e. \[\sigma_Q = \{ \lambda \in \R : (\exists \bv) Q \bv = \lambda \S[Q] \bv \}.\]
		Then, for any $Q \succeq 0$ with $\S[Q] \succ 0$, we have 
		\[ P \in \left[\min \sigma_Q, \max \sigma_Q \right]. \]
		\item \label{tt:PIsSpecRad-limit} Suppose $\S$ is primitive\footnote{In all numerical investigations, $\S$ has turned out to be primitive.}, i.e. there exists $k$ such that for all $Q\succeq 0$, $\S^k \succ 0$. Then for all $Q \succeq 0$,
		\[ \min \sigma_{\S^k[Q]},\, \max \sigma_{\S^k[Q]} \xrightarrow{k\to\infty} P \]
		exponentially quickly.

	\end{enumerate}
\end{theorem}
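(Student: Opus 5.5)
The plan is to first reduce \eqref{eq:GenOptimProblem} to a statement about $\S$ alone, and then use Perron--Frobenius theory for cone-preserving maps on the cone $\mathcal{K}$ of positive semi-definite matrices, which is closed, convex, pointed, solid and self-dual under $\langle A,B\rangle=\tr(AB)$.

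For part~\ref{tt:PIsSpecRad-cone}, observe that $Q\mapsto (L^{-1})^*QL^{-1}$ is a congruence, so it maps $\mathcal{K}$ into itself, and $\V$ does so by hypothesis. Hence $\S$ preserves $\mathcal{K}$. The Krein--Rutman/Perron--Frobenius theorem for proper cones \cite{Berman94} then gives an eigenvalue equal to $\specrad\S$ with an eigenmatrix in $\mathcal{K}$.

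Next I would substitute $Q'=L^*QL$, as in \eqref{eq:PdefS}. This is a bijection of the positive definite cone onto itself, and it rewrites the problem as
\[ P=\sup_{Q\succ0}\inf_{\bv\neq 0}\frac{\bv^*Q\bv}{\bv^*\S[Q]\bv}=\sup_{Q\succ0}\sup\{t\ge 0: tS[Q]\preceq Q\}. \]
Part~\ref{tt:PIsSpecRad-specrad} then reduces to a Collatz--Wielandt formula for the cone: $1/\specrad\S=\sup\{t: \exists Q\succ0,\ t\S[Q]\preceq Q\}$.

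For the upper bound on $P$, take $Q\succ0$ with $t\S[Q]\preceq Q$. Apply the dual Perron eigenvector $W\succeq 0$, $W\neq 0$, which satisfies $\S^*[W]=\rho W$ with $\rho=\specrad\S$ and exists because $\S^*$ preserves the self-dual cone. Pairing gives $t\rho\tr(WQ)\le \tr(WQ)$, and $\tr(WQ)>0$ since $Q\succ0$. Therefore $t\le 1/\rho$, so $P\le 1/\rho$.

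For the lower bound on $P$, use $Q_\epsilon=\sum_{k\ge0}\left(\S/(\rho+\epsilon)\right)^k[I]$, which converges and is positive definite since the $k=0$ term is $I$. It satisfies $\S[Q_\epsilon]=(\rho+\epsilon)(Q_\epsilon-I)\preceq(\rho+\epsilon)Q_\epsilon$, hence $P\ge 1/(\rho+\epsilon)$ for every $\epsilon>0$. If $\rho=0$ then $P=\infty$, consistent with the convention $1/0=\infty$.

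For part~\ref{tt:PIsSpecRad-bounds}, let $Q\succeq0$ with $\S[Q]\succ0$, and set $a=\min\sigma_Q$ and $b=\max\sigma_Q$, so that $a\S[Q]\preceq Q\preceq b\S[Q]$.
\begin{itemize}
\item Pairing the right inequality with $W$ gives $\tr(WQ)\le b\rho\tr(WQ)$. This yields $b\ge 1/\rho=P$, provided $\tr(WQ)>0$. If $\tr(WQ)=0$, I would instead pair with $\tr(W\S[Q])=\rho\tr(WQ)$, which vanishes; but $\S[Q]\succ 0$ and $W\neq 0$ force this to be positive, so the degenerate case is impossible.
\item For the lower bound I would not use the dual vector. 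Instead, perturb $Q_\delta=Q+\delta\S[Q]\succ0$. Then $\S[Q_\delta]=\S[Q]+\delta\S^2[Q]$, and one checks that $\inf_{\bv}\bv^*Q_\delta \bv/\bv^*\S[Q_\delta]\bv\to a$ as $\delta\to0$, because $\S[Q]\succ0$ dominates the perturbation terms. Hence $P\ge a$.
\end{itemize}
The alternative is to note directly that $a>0$ forces $Q\succeq a\S[Q]\succ0$, so $Q$ is itself admissible in the supremum; if $a=0$ the bound $P\ge a$ is trivial. This direct route is simpler and I would use it.

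Part~\ref{tt:PIsSpecRad-limit} is the main obstacle, since it needs a convergence rate rather than existence. Primitivity makes $\S^k$ strictly positive on $\mathcal{K}\setminus\{0\}$, so it maps the cone into its interior. I would use Birkhoff's contraction theorem for the Hilbert projective metric $d(A,B)=\log(b/a)$, where $aB\preceq A\preceq bB$. A strictly positive linear map on a finite-dimensional cone has finite projective diameter of its image, and is therefore a strict contraction by a factor $\tanh(\Delta/4)<1$. Thus $d(\S^{k+1}[Q],\S^k[Q])\to0$ geometrically: both iterates converge in the projective metric to the Perron eigenmatrix $Q^*\succ0$, and the distance between them is precisely $\log(\max\sigma_{\S^k[Q]}/\min\sigma_{\S^k[Q]})$. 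Combined with part~\ref{tt:PIsSpecRad-bounds}, this squeezes both $\min\sigma$ and $\max\sigma$ onto $P$ exponentially fast.

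The technical care needed in this step is twofold: verifying that primitivity in the stated sense yields finite projective diameter (compactness of the base $\{Q\succeq0,\tr Q=1\}$ together with continuity of $\S^k$), and handling the $k$-step rather than one-step contraction, which is done by applying Birkhoff's theorem to $\S^k$ and noting that $\S$ is nonexpansive in the projective metric.
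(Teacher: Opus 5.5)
Your proposal is correct. Apart from part a, which is the same congruence-plus-Perron--Frobenius argument as the paper, it takes a genuinely different route.

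For part b, the paper substitutes $\bv = L^{-1}\bw$ and quotes the Collatz--Wielandt characterisation $\specrad \S = \inf\{\mu : \exists Q\succ 0,\ \S[Q]\preceq \mu Q\}$ from \cite{Akian11} as a black box. You instead change variables in $Q$, which produces $\S$ exactly. The paper's substitution in $\bv$ literally yields the similar operator $Q\mapsto (L^{-1})^*\V[Q]L^{-1}$, which has the same spectral radius but is not $\S$. You then prove both inequalities directly: the dual Perron eigenmatrix $W$ of $\S^*$ gives $P\le 1/\specrad\S$, and the Neumann series $Q_\epsilon$ gives the reverse.

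The paper's appendix has no separate argument for part c; the paragraph labelled c there is really the proof of d. Your pairing with $W$, including the observation that $\S[Q]\succ 0$ forces $\tr(WQ)>0$, supplies what the paper leaves implicit.

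For part d, the paper uses the spectral gap of a primitive cone-preserving map \cite{Berman94}, so that $P^k\S^k[Q]\to\alpha Q_\lambda$ exponentially. It combines this with local Lipschitz continuity of $\min\sigma_Q$ and $\max\sigma_Q$ near the Perron eigenmatrix. You instead use three ingredients:
\begin{itemize}
\item Birkhoff's contraction theorem in Hilbert's projective metric;
\item the identity $d(\S^k[Q],\S^{k+1}[Q])=\log(\max\sigma_{\S^k[Q]}/\min\sigma_{\S^k[Q]})$;
\item the squeeze from part c.
\end{itemize}

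The paper's route is shorter given its citations, and its rate is governed by the subdominant eigenvalues of $\S$. Yours is self-contained and gives an explicit contraction factor. It also yields a further fact: the order-preservation that makes $\S$ nonexpansive shows the enclosures $[\min\sigma_{\S^k[Q]},\max\sigma_{\S^k[Q]}]$ are nested. To see this, apply $\S$ to $a\S[A]\preceq A\preceq b\S[A]$ with $A=\S^k[Q]$. Consequently the certified bounds returned by the power-method recipe of Section~\ref{s:Algorithms} improve monotonically with each iteration.
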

The proof of this theorem is in Appendix~\ref{a:PIsSpecRad}. The key idea is that the set of positive semi-definite Hermitian matrices forms a cone on which $\S$ is invariant, and this means that the operator $\S$ has many properties analogous to those of positive matrices, such as a Perron--Frobenius theorem and min--max results \cite{Berman94}.

This theorem provides us a recipe for computing $P$-like indicators:
%
%
%
\begin{enumerate}
	\item Initialise some $Q_0 \succ 0$. This could be the identity matrix, or an optimal $Q$ for a similar problem (e.g. $Q^*_{\lambda + \Delta \lambda}$, as one numerically sweeps through a region of interesting $\lambda$). 
	\item Compute $Q_{t+1} = \S[Q_t]$ until the ratio of the largest and smallest generalised eigenvalue of $Q_t$ against $Q_{t+1}$ is smaller than $1+\epsilon$ for a specified tolerance $\epsilon$.
	\item Return the smallest generalised eigenvalue, a certified lower bound for $P$ to relative tolerance $\epsilon$.
\end{enumerate}
This can be done in $\mathcal{O}(\log \epsilon^{-1})$ applications of $\S$. Bear in mind that for practical purposes we do not need the relative tolerance $\epsilon$ to be very small: it could be $0.1$, for instance. In such cases, with good initialisation (e.g. $Q^*_{\lambda + \Delta \lambda}$), only a couple of iterations may be required.

\section{Constructing $\hat{\V}_\lambda[Q]$}\label{s:Variance}

Almost all the pieces are in place for our framework: there remains one quite minor detail, which is how to construct a good variance estimator $\hat V_\lambda$ that satisfies Assumption~\ref{a:VPropositions}. Recall this estimator seeks to estimate from finite data the Birkhoff variance operator 
\[ \V_\lambda[Q] = \lim_{M\to\infty} M \bE[\tCl^* Q \tCl], \]
recalling that $\tCl = \frac{1}{M} \sum_{m=1}^M \cl{\omega_m} - \Cl$.

Unless our data points are all independently sampled, our snapshot characteristic matrices $\cl{\omega_m}$ will be correlated, so estimating a variance of their sum will require taking into account these correlations. Indeed, we can expand \eqref{eq:Birkhoffdef} into matrix form to obtain
\begin{equation} \V_\lambda[Q] = \Ces\sum_{\ell=-\infty}^\infty \Gamma_\ell, \label{eq:VasCesaro}  \end{equation}
where the matrices 
\[ \Gamma_\ell = \bE[\cl{\theta^\ell(\omega)})^* Q \cl{\omega}] - \bE[\Cl^* Q \Cl] \]
obey $\Gamma_{-\ell} = \Gamma_\ell^*$.


For $|\ell| \leq M$ we can estimate the $\Gamma_\ell$ from our time series:
\begin{align*} \hat\Gamma_\ell = \frac{1}{M} \sum_{m=1}^{M-\ell} (\cl{\omega_{m+\ell}}-\hCl)^* Q (\cl{\omega_m} - \hCl)
,\, \ell\geq 0.\end{align*}
It is easy to check that this preserves positive semi-definiteness of $\hat \V_\lambda[Q]$.

We could therefore estimate $\V_\lambda$ in \eqref{eq:VasCesaro} as one of the finite sums in \eqref{eq:Cesarodef} to which the Ces\`aro limit converges. However, it will be more advantageous in terms of convergence for us to choose a window $L' + L_M$, some appropriate weights $\kappa_M(\ell)$ and set
\[ \hat \V_\lambda[Q] := \sum_{\ell=-(L'+L_M)}^{L' + L_M} \kappa_M(\ell) \hat \Gamma_\ell. \] 
A computationally efficient way to compute $\hat V_\lambda$ is given in Section~\ref{ss:FastVariance}.

The following proposition, proved in Appendix~\ref{a:VPropositions}, tells us that we have a fair amount of flexibility to ensure that $\hat \V_\lambda$ is a consistent, positive semi-definite estimator:
\begin{proposition}\label{p:VhatConstruction}
Suppose that $\kappa_M = \kappa_p \ast \kappa_w(\cdot/L_M)$, where:
\begin{itemize}
	\item $\lim_{M\to\infty} L_M = \infty$ and $\lim_{M\to\infty} \frac{L_M}{\sqrt{M}} = 0$.\footnote{This can be relaxed to $\lim_{M\to\infty}L_M / M = 0$ if the $\cl{\omega}$ are fourth-order exponential mixing, as is typical for stochastic and chaotic systems.}
	\item There exists a constant $C$ such that for all $M$ and $|\ell| \leq L_M + L'$, $\bV[\hat \Gamma_\ell] \leq C/M$.
	\item $\kappa_p: \Z \to \R$ is supported on $-L',\ldots, L'$ and $\sum_{\ell = -L'}^{L'} \kappa_p(\ell) = 1$.
	\item $\kappa_w: \R \to \R_+$ is supported on $[-1,1]$, $\kappa_w(0) = 1$, and $\kappa_w'$ is of bounded variation.
	\item The Fourier transforms of $ \kappa_p$ and $ \kappa_w$ are non-negative.
\end{itemize}
Then $\hat\V_\lambda$ satisfies Assumption~\ref{as:Vhat}.
\end{proposition}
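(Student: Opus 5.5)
We have to check the three parts of Assumption~\ref{as:Vhat}: linearity, positivity and consistency. Linearity is immediate, because each $\hat\Gamma_\ell$ is linear in $Q$ and $\hat\V_\lambda$ is a fixed finite weighted sum of them. The substance lies in positivity and consistency.

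\textbf{Positivity.} Set $D_m := \cl{\omega_m} - \hCl$ for $1\le m\le M$, and $D_m := 0$ otherwise. Then for $\ell \ge 0$,
\[ \hat\Gamma_\ell = \frac1M \sum_m D_{m+\ell}^* Q D_m, \]
and $\hat\Gamma_{-\ell} = \hat\Gamma_\ell^*$. Fix $\bv$ and write $Q = R^*R$. Put $a_m := R D_m \bv \in \C^N$. This gives
\[ \bv^*\hat\V_\lambda[Q]\bv = \frac1M\sum_{m,m'} \kappa_M(m-m')\, a_{m}^* a_{m'}. \]
This is a quadratic form in the finitely supported sequence $(a_m)$, with Toeplitz kernel $\kappa_M$. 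By Herglotz/Bochner it is nonnegative when $\hat\kappa_M \ge 0$. Explicitly, it equals $\frac{1}{2\pi M}\int \hat\kappa_M(\xi)\,\bigl|\sum_m a_m e^{im\xi}\bigr|^2 \d\xi$, where $|\cdot|$ is the Euclidean norm.

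Since $\kappa_M = \kappa_p \ast \kappa_w(\cdot/L_M)$, we have $\hat\kappa_M = \hat\kappa_p \cdot \widehat{\kappa_w(\cdot/L_M)}$. Here the second factor is the Fourier series of the sampled function $\ell\mapsto\kappa_w(\ell/L_M)$ on $\Z$. By Poisson summation it is a periodisation of $L_M\hat\kappa_w(L_M\,\cdot)$, and hence nonnegative. So $\hat\kappa_M \ge 0$, which gives positivity.

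\textbf{Consistency.} I split the error as
\[ \hat\V_\lambda[Q]-\V_\lambda[Q] = \sum_\ell \kappa_M(\ell)\bigl(\hat\Gamma_\ell - \bE\hat\Gamma_\ell\bigr) + \Bigl(\sum_\ell \kappa_M(\ell)\bE\hat\Gamma_\ell - \V_\lambda[Q]\Bigr). \]

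For the variance term, $|\kappa_M| \le \|\kappa_p\|_1\|\kappa_w\|_\infty$ and the window has length $O(L_M)$. By Cauchy--Schwarz and the hypothesis $\bV[\hat\Gamma_\ell]\le C/M$, its second moment is $O(L_M^2/M)$. This tends to $0$ because $L_M/\sqrt M\to0$, so Chebyshev gives convergence in probability.

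For the bias term, $\bE\hat\Gamma_\ell$ differs from $\frac{M-|\ell|}{M}\Gamma_\ell$ by the effect of centring with $\hCl$ instead of $\Cl$. I would show this contributes $O(1/M)$ per lag, using the fact that the variance of $\tCl$ is $O(1/M)$ by Assumption~\ref{as:birkhoff}. Summed over $O(L_M)$ lags it is $o(1)$, and so is the factor $(M-|\ell|)/M$. It then remains to show $\sum_\ell \kappa_M(\ell)\Gamma_\ell \to \Ces\sum_\ell \Gamma_\ell$.

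\textbf{Main obstacle.} This last step is the hard one, since we only know Ces\`aro convergence and not absolute summability. My plan is to sum by parts twice. Write $S_T = \sum_{|t|\le T}\Gamma_t$ and $\Sigma_U = \sum_{T\le U} S_T$, so that $\Sigma_U/U \to \V_\lambda[Q]$ by \eqref{eq:Birkhoffdef} and Proposition~\ref{p:VAsCorrelationSum}. Then $\sum_\ell \kappa_M(\ell)\Gamma_\ell$ equals $\sum_U \Sigma_U\,\Delta^2 w_M(U)$, where $w_M$ is the symmetrised weight, i.e.\ $\kappa_M$ restricted to $\ell \ge 0$.

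Because $\kappa_w'$ has bounded variation, $\Delta^2 w_M(U) = L_M^{-2}\,\d\kappa_w'(U/L_M)$ approximately, with total variation $O(1/L_M)$. Substituting $\Sigma_U = U\,\V_\lambda[Q] + o(U)$ then gives the limit $\V_\lambda[Q]\cdot\bigl(-\int_0^1 x\,\d\kappa_w'(x)\bigr)$. Integration by parts evaluates the integral to $\kappa_w(0)=1$, using that $\kappa_w$ vanishes at $1$. The convolution with $\kappa_p$ only shifts lags by at most $L'$, and since $\sum\kappa_p=1$ it changes the limit by $o(1)$. The delicate points are tracking these discretisation errors uniformly and checking the boundary terms.
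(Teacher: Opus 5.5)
Your proposal is correct and follows the paper's skeleton. Linearity is immediate. Positivity comes from the Toeplitz form with symbol $\widehat{\kappa_M}\ge 0$; the paper uses the square root of the truncated Toeplitz matrix and you use the Herglotz integral, which is the same thing. Consistency splits into a variance term of order $L_M^2/M$ plus a bias, in which centring at $\hCl$ costs $O(1/M)$ per lag and the factor $(M-|\ell|)/M$ costs $O(L_M^2/M)$.

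You differ from the paper at the step $\sum_\ell \kappa_M(\ell)\Gamma_\ell\to\Ces\sum_\ell\Gamma_\ell$. The paper simply asserts this convergence and never uses the hypothesis that $\kappa_w'$ has bounded variation. Your double summation by parts actually proves the step and shows where that hypothesis and $\kappa_w(0)=1$ enter. Your periodisation via Poisson summation is also more accurate than the paper's formula $\widehat{\kappa_p}(\xi)\,L_M\widehat{\kappa_w}(L_M\xi)$ for the Fourier series of the sampled kernel.

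There is one sign slip in that step. Since $\Delta^2 w_M(U)=w_M(U)-2w_M(U+1)+w_M(U+2)\approx L_M^{-2}\kappa_w''(U/L_M)$, the limiting constant is $+\int_{[0,1]}x\,\mathrm{d}\kappa_w'(x)$, including any atom of $\mathrm{d}\kappa_w'$ at $x=1$. It is this quantity that equals $\kappa_w(0)$.

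You can avoid the continuum approximation and the boundary bookkeeping altogether:
\begin{itemize}
\item Telescoping gives exactly $\sum_{U\ge0}U\,\Delta^2 w_M(U)=w_M(1)=\kappa_M(1)\to\kappa_w(0)\sum_j\kappa_p(j)=1$.
\item Young's inequality gives $\|\Delta^2 w_M\|_{\ell^1}\le\|\kappa_p\|_{\ell^1}\,\mathrm{TV}(\kappa_w')/L_M$. Since $\Delta^2 w_M$ vanishes for $U>L_M+L'$, this yields $\sum_U U|\Delta^2 w_M(U)|=O(1)$ and $\sum_{U\le U_0}|\Delta^2 w_M(U)|\to0$ for each fixed $U_0$.
\item Splitting $\sum_U(\Sigma_U-U\V_\lambda[Q])\,\Delta^2 w_M(U)$ at a fixed $U_0$ then finishes the argument.
\end{itemize}
With this route, $\kappa_p$ needs no separate treatment.
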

This result tells us that we can estimate our Ces\`aro sum while maintaining positive definiteness but choosing an appropriate smooth kernel $\kappa_w$. However, we can also accelerate the convergence of our variance estimator $\hat \V_\lambda$ by cancelling out any slowly-decaying or quasi-periodic resonances in the sample data, by carefully choosing $\kappa_p$. We will discuss this in the rest of the section.

\subsection{Windowing kernel $\kappa_w$}

We begin by considering $\kappa_w$. For now, we will assume that $\kappa_p$ is trivial (so $L' = 0$). 

Here, we can only capture $\Gamma_\ell$ for $|\ell| \leq L_M$, and in practice we would like to keep $L_M$ small as possible so as to minimise the standard deviation of $\hat\V_\lambda$.


We are trying to approximate a sum over all the $\Gamma_\ell$ \eqref{eq:VasCesaro} by a weighted sum
\[  \Ces \sum_{\ell = -\infty}^\infty \Gamma_\ell \approx \sum_{\ell=-\infty}^\infty \kappa_M(\ell) \Gamma_\ell. \]

Supposing the $\Gamma_\ell$ decay quickly as $\ell \to \infty$, the Ces\`aro sum \eqref{eq:VasCesaro} reduces to a regular sum, and the largest contributions will be at $|\ell|$ small. This means we want our $\kappa_M(\ell)$ to be close to $1$ for $\ell$ small, while still satisfying the assumptions of Proposition~\ref{p:VhatConstruction}. Using $\kappa_w$ for this, we seek $\kappa_w(x) \approx 1$ for $x \ll 1$: unfortunately, the assumptions of Proposition~\ref{p:VhatConstruction} force $\kappa_w''(0) < 0$. However, the following $\kappa$ at least minimises $|\kappa''(0)|$ within the constraints on $\kappa_w$:
\[ \kappa_w(x) = \begin{cases} \frac{1}{\pi} \sin \pi|x| + \left(1-|x| \right)\cos \pi x, &|x| < 1 \\  0 &|x| \geq 1 \end{cases} \]
This function is plotted in Figure~\ref{f:bumpfunc}

\begin{figure}
	\centering
	\includegraphics{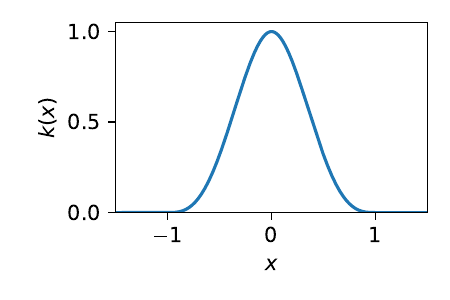}
	\caption{Plot of the optimal windowing function $\kappa_p$.}
	\label{f:bumpfunc}
\end{figure}

\subsection{Metastability kernel $\kappa_p$}
Now, suppose the $\Gamma_\ell$ decay slowly. If the sampling process is chaotic or stochastic, we usually find that our correlations have some nice summation structure, i.e. there exists an expansion \[\Gamma_\ell = \sum_{k=1}^K W_k \mu_k^{\ell} + \mathcal{O}(e^{-\ell/\tau}),\, \ell \geq 0 \]
where the $\mu_k$ lie on or inside the complex unit circle, and $e^{-1/\tau}$ lies far enough inside the unit circle that the $\mathcal{O}(e^{-\ell/\tau})$ decays at a reasonably fast exponential rate. 
In general, the $\mu_k$ are peripheral eigenvalues of a suitably formulated Koopman operator of the sampling process $\theta$ (which we may know, or can estimate from our sample). 

Let's consider the windowing function for $\mu \in \C \backslash \{1\}$:
\[ d_\mu(k) = \begin{cases}\frac{1+\mu^2}{(1-\mu)^2},& k = 0 \\ -\frac{\mu}{(1-\mu)^2},& k = \pm 1 \\0,& \textrm{ else}\end{cases} \]
It can be checked that this function has non-negative Fourier transform.
We find that for $\ell > 1$,
\[ (d_\mu \ast \Gamma_\cdot)(\ell) = \sum_{k=1}^K  \tfrac{1+ \mu\mu_k}{(1-\mu)^2} (\tfrac{\mu}{\mu_k}-1) \cdot W_k \mu_k^\ell + \mathcal{O}(r^n) \]
and by symmetry, $(\kappa_p\ast \Gamma_\cdot)(-\ell) = (\kappa_p\ast \Gamma_\cdot)(\ell)^*$. Importantly, if $\mu = \mu_k$, then we have removed one of the $\mu_k$ terms. Combining several of these kernels as
\[ \kappa_p = d_{\mu_1} \ast d_{\mu_2} \ast \cdots \ast d_{\mu_k}, \]
we find that for $|\ell| > K$,
\begin{equation} (\kappa_p \ast \Gamma_\cdot)(\ell) = \mathcal{O}(e^{-\ell/\tau}) \label{eq:windowedfastdecay} \end{equation}
and so is reasonably quickly decaying over some essential spectrum decorrelation time $\tau$. We can recast our weighted sum
\[ \hat\V_\lambda[Q] = \sum_{\ell = -\infty}^\infty \kappa_M(\ell) \Gamma_\ell = \sum_{\ell = -\infty}^\infty \kappa_w(\tfrac{\ell}{L_M}) (\kappa_p \ast \Gamma_\cdot)(\ell), \]
and so we are back to the above case.

Note that the strategy of cancelling out resonances $\mu$ is only effective when $\mu$ is far from 1 (i.e. the resonances are quasiperiodic rather than simply slowly decaying). If $1-\mu \in \C$ is small, $d_\mu$ becomes large, making the estimator unstable.

\subsection {Choice of kernel window $L_M$}

Finally, we need to settle on $L_M$. We need it to cover the large coefficients of $\kappa_p \ast \Gamma$: assuming that this is possible and $\kappa_p \ast \Gamma$ is reasonably quickly decaying, our main source of error comes from the fact that $\kappa_w \neq 1$ at the order of the decorrelation time $\tau$. In this case, for any vector $\bv$ the expected error is 
\[ \bE[ \bv^* \hat \V_\lambda[Q] \bv ] = \textrm{bias}^2 + \textrm{variance} \sim \left(\frac{\kappa_w''(0)}{2} \frac{\tau}{L_M}\right)^2 + \frac{L_M}{M}. \]
this is minimised when 
\[L_M \sim \sqrt[5]{\frac{16}{\kappa_w''(0)^2} M \tau^4} \approx \tau \sqrt[5]{\frac{16 M}{\pi^4 \tau}}.\]

In most cases, this means that $L_M$ should be a few times the decorrelation time.  

\subsection{Efficient algorithm for rank-one characteristic matrices}\label{ss:FastVariance}

Most parts of estimating $\hat\S(\lambda)$ using the methods in this and the previous section can be estimated in $\mathcal{O}(N^3 + M N^2)$ time, similar to exact DMD algorithms. The only area where this may not be the case is in in evaluating the long lag sum in $\hat V[Q]$. However, if the $\cl{\omega_m}$ are rank-one (as in EDMD), then this can be achieved in $\mathcal{O}(N^3 + M N^2 + M N L_M)$ time.

Let $\tilde L = L_M + L'$ and for $\ell \geq 0$ let 
\[ \tilde\V[Q] = \sum_{\ell = 0}^{\tilde L} \tilde \kappa(\ell) \hat \Gamma_\ell,\, \tilde \kappa(\ell) = \begin{cases} \kappa(\ell), &\ell > 0 \\ \tfrac 12 \kappa(0), &\ell = 0 \end{cases} \]
so $\hat\V[Q] = \tilde\V[Q] +\tilde\V[Q]^*$.

It can be shown that we can write 
\begin{align*} \tilde\V[Q] 
&= \frac1M \sum_{m=1}^M \left(\sum_{\ell=0}^{\min\{\tilde L, M-m\}} \tilde\kappa(\ell) \Cl[\omega_{m+\ell}]^*\right)\Cl[\omega_m] 
+ \frac1M \sum_{m=1}^{\tilde L} \left(\sum_{\ell=m}^{\tilde L} \tilde\kappa(\ell) \right) \left( \hCl^* \Cl[\omega_{M-m+1}] + \Cl[\omega_m]^* \hCl \right)
\\&\qquad - \left(\sum_{\ell=0}^{\tilde L} \frac{M+\ell}{M} \tilde \kappa(\ell) \right) \hCl^* \hCl \end{align*}

If, as in DMD applications, our snapshot characteristic matrices are rank-one, so $\Cl[\omega] = u_\omega v_\omega^*$, our first term becomes 
\[ \frac1M \sum_{m=1}^M \left(\sum_{\ell=0}^{\min\{\tilde L, M-m\}} \tilde\kappa(\ell) \left(u_{\omega_{m+\ell}}\cdot u_{\omega_{m}}\right) v_{\omega_{m+\ell}}\right)v_{\omega_m}^*
\]
which can be computed in $\mathcal{O}(M(N^2 + \tilde L N))$ operations. The second term involves adding $2L$ products of a full matrix ($\hCl$) with a rank-one matrix (a snapshot characteristic matrix), which together with computing the sums of $\tilde \kappa$ makes a $\mathcal{O}(\tilde L N^2 + \tilde L^2)$ computation. Finally, assuming that multiplying two full matrices together is done naively with $\mathcal{O}(N^3)$ operations, the last operation takes $\mathcal{O}(\tilde L + N^3)$ operations. Since $\tilde L = \mathcal{O}(L_M) = o(\mathcal{M})$, we can therefore conclude that computing $\hat V[Q]$ requires $\mathcal{O}(N^3 + MN^2 + M L_M N)$ operations.

Consequently, evaluating $\hat\S[Q]$ requires the same order of operations, and computing $\hat P(\lambda)$ to relative tolerance $\epsilon$ using the algorithm in Section~\ref{s:Algorithms} therefore requires $\mathcal{O}((N^2 + MN + M L_M )N \log \epsilon^{-1})$ operations. Given that it is reasonable to set $\epsilon$ to be quite large (e.g. $0.1$), and $L_M$ is unlikely to be much larger than $N$, this is ultimately comparable to an exact DMD algorithm.

\section{Example 1: multivariate AR process}\label{s:VAR}

We now illustrate our methods with some examples, beginning with a setting where we can compute $P(\lambda)$ explicitly. We consider an autoregressive Gaussian process given by 
\[ Y = A X + \xi \]
with $X, Y \in \R^N$, $X$ sampled again \iid $\sim \mathcal{N}(0,\Sigma_X)$ with $\Sigma_X$ invertible, and {\iid} noise $\xi \sim \mathcal{N}(0,\Sigma_\xi)$.

Our observation function will be simply $\psib(x) = x$. In this case, our snapshot characteristic matrices are
\[ \cl{\omega}= X (\lambda X^* - Y^*) = X (X^* (\lambda I - A^*) - \xi^*), \] 
and so
\[ \Cl = \Sigma_X R^{-1},\, R := (\lambda I - A^*)^{-1}. \]

We can show that 
\[ \V_\lambda[Q] = \bE[\cl{\omega}^* Q \cl{\omega}] - \Cl^* Q \Cl = \tr (\Sigma_X Q) \left( R^{-1} \Sigma_X R^{-*} + \Sigma_\xi\right) + R^{-*} \Sigma_X Q^T \Sigma_X R^{-1} \]
where $Q^T$ is the real transpose of $Q$. 
Noting that the spectral radius of $\S_\lambda: Q \mapsto \V_\lambda[\Cl^{-*}Q\Cl^{-1}]$ is the same as the spectral radius of $\S'_\lambda: Q \mapsto \Cl^{-*} \V_\lambda[Q] \Cl^{-1}$, we have that 
\[ \Sigma_X \S'_\lambda[Q] = \tr(\Sigma_X Q) \left(I + R^* \Sigma_\xi R \Sigma_X^{-1} \right) + \Sigma_X Q^T \]

By taking the trace of both sides, and noting that since $\Sigma_X$ is real symmetric so $\tr \Sigma_X Q^T = \tr Q \Sigma_X$, and using the cyclic property of the trace, we find
\[ \tr \Sigma_X \S'_\lambda[Q] = \tr(\Sigma_X Q) \left(N + 1 + \tr \Sigma_\xi R \Sigma_X^{-1} R^* \right) \]
so, choosing $Q$ to be a positive semi-definite leading eigenfunction (therefore with $\tr \Sigma_X Q > 0$), implies that
\begin{equation}P(\lambda)^{-1} = N + 1 + \tr \Sigma_\xi R \Sigma_X^{-1} R^*.\label{eq:ARP}\end{equation}

This closed form suggests that there are two effects at play here, which can be understood by thinking about $1/P(\lambda)$ as the order of snapshots required for the area around $\lambda$ to be clear of finite-data spectrum. 

Firstly, there is a base rate of $N+1$, which we can imagine as asking for enough snapshots to get into the underfitting regime and then to beat random matrix-like errors.

On top of this, noting that $ R\Sigma_X^{-1} R^*$ is the inverse of the covariance matrix of $(\lambda I - A^*) X$, we have that the necessary number of snapshots depends on how easily (and in how many different directions) variability in $\xi$ can cancel out variability in $(\lambda I - A^*)$ and thus make the fitted matrix non-invertible. Here, there is a connection to the standard notion of pseudospectrum, as
\[\tr \Sigma_\xi R \Sigma_X^{-1} R^* = \| \Sigma_\xi^{1/2} R \Sigma_X^{-1/2} \|_{\rm Frob}^2. \] 
Recall that $R = (\lambda I - A^*)^{-1} = (\lambda I - K)^{-1}$ is the standard resolvent: we might study $\| R \|_{\rm Frob}^{-1}$ as the classical pseudospectrum.

\begin{figure}[t]
	\centering
	\includegraphics[scale=\imgscale]{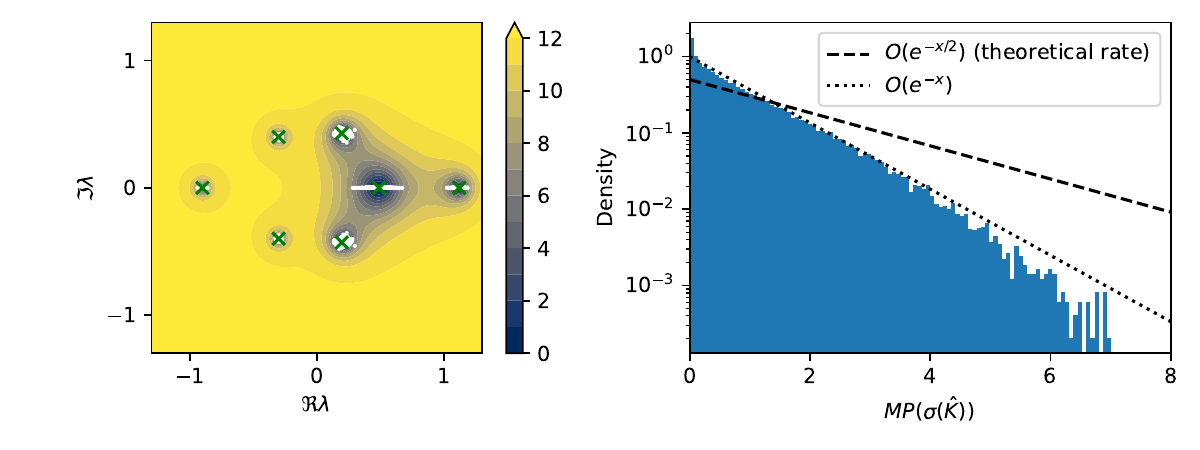}
	\caption{Left: sampling pseudospectrum for system in Section~\ref{s:VAR} with true eigenvalues (green crosses), plotted against data-driven eigenvalues (white dots) for 1,000 samples of $\hat K$ with $M = 100$. Right: histogram of $M \hat P(\lambda)$ for $\lambda \in \hat \Sigma = \sigma(\hat K)$ for 10,000 samples of $\hat K$, with exponential fits.}
	\label{f:VAR}
\end{figure}

While Theorem~\ref{t:EigsInCurve} does not apply here, as our samples are unbounded, we can attempt to test $P(\lambda)$ for $\lambda \in \sigma(\hat K)$ anyway. In Figure~\ref{f:VAR}, we considered the matrix 
\[ A = \begin{bmatrix} -0.9 & 0 & 0 & 0 & 0 & 0 & 0 \\
	0.1 & -0.3 & -0.4 & 0 & 0 & 0 & 0.1 \\
	0.1 & 0.4 & -0.3 & 0.1 & 0 & 0 & 0 \\
	0.1 & 0 & 0 & 0.5 & 0 & 0.1 & 0 \\
	0 & 0 & 0 & 1.0 & 0.5 & 0 & 0.1 \\
	0.1 & 0 & 0 & 0 & 0.9 & 0.5 & 0.1 \\
	0 & 0 & 0 & 0.1 & 0.1 & 0.9 & 0.5 \\
\end{bmatrix}
\]
with $\Sigma_X = I$, $\Sigma_\xi = 0.1 I$. Setting $M = 100$, we computed $\hat K$ 10,000 times and plotted a histogram of the values of $M P(\lambda)$ on $\sigma(\hat K)$. Theorem~\ref{t:EigsInCurve} suggests that the large tails of this value should be exponentially distributed, and this appears to be the case. However, the rate of decay appears to be better than the theorem produces: Theorem~\ref{t:EigsInCurve} would suggest that $\bP(M P(\lambda) > c) \simeq e^{-c/2}$, but in fact we see probabilities more like $e^{-c}$, a behaviour which appears more broadly in the examples to follow (see, for example, Figure~\ref{f:1DStatTest}).

However, it is worth noting that $e^{-MP(\lambda)}$ tails are certainly not universal, and $e^{-MP(\lambda)/2}$ is the tightest general bound. 
Indeed, if we choose a one-dimensional example of $X_m$ {\iid} uniformly distributed on $\{-1,1\}$, and $Y_m$ {\iid} normals, we find that
\[ K = \frac{\sum_{m=1}^M X_m Y_m}{\sum_{m=1}^M X_m^2} = \frac1M \sum_{m=1}^M X_m Y_m \sim \mathcal{N}\left(0,\frac{1}{M}\right). \]
and $P(\lambda) = |\lambda|^2 \sim \frac{1}{M} \chi_1^2$ whose tails decay as $e^{-MP(\lambda)/2}$.
We will also see $e^{-MP(\lambda)/2}$ in the estimator $\hat P(\lambda)$ in Section~\ref{ss:Numerics1DData}: it tends to be associated with real eigenvalues where $e^{-MP(\lambda)}$ is associated with complex eigenvalues (see also the end of the proof of Theorem~\ref{t:StatTest}).

	\section{Example 2: 1D expanding maps}\label{s:Numerics1D}
		\begin{figure}
		\centering
		\includegraphics[scale=\imgscale]{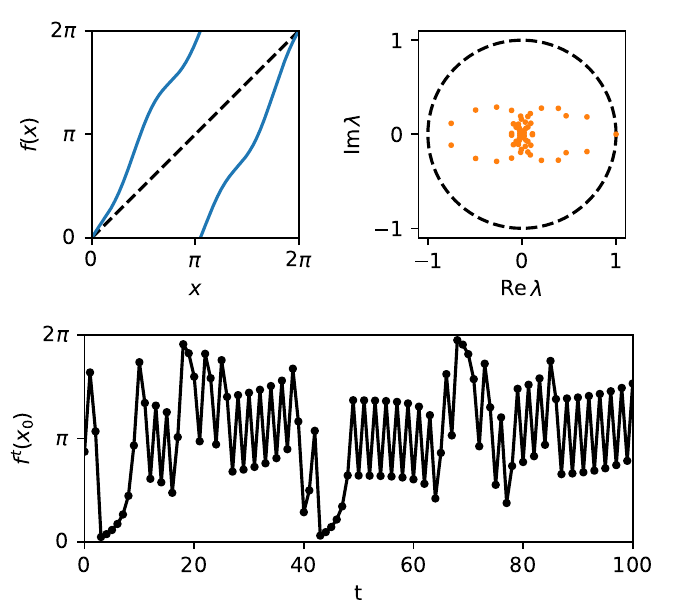}
		\caption[One-dimensional map]{Top left: graph of the map \eqref{eq:1DMap}. Top right: its Ruelle--Pollicott resonances. Bottom: a representative time series $x_{t+1} = f(x_t)$.}
		\label{f:1Dmap}
	\end{figure}
	
	We now turn to some numerical examples. We will begin by testing our theorems in a setting where we can calculate the true expectations well \cite{Wormell19, Wormell25}, namely in EDMD for an expanding map of the periodic interval. Here, and in the following sections, we will use the setup in Section~\ref{ss:KoopmanContext}.
	
	We consider the following chaotic system, an expanding map on the periodic interval $[0,2\pi)$ given by
	\begin{equation} f(x) = 2x + 2\pi \left(-0.03 + 0.04\sin x + 0.03\cos 3x - 0.03\sin 3x\right) \mod 2\pi. \label{eq:1DMap}\end{equation}
	Dynamical information about this map is given in Figure~\ref{f:1Dmap}.
	
	We will choose our observables as a trigonometric (i.e. Fourier) basis. For even $N$, this means
	\[ \psib(x) = \{1, \cos x, \sin x, \cos 2x, \sin 2x, \ldots, \sin \tfrac{(N-2)x}{2}, \cos \tfrac {Nx}2 \}. \]

	Our sample will be taken {\iid} as $X_m \sim U(0,1), m = 1,\ldots, M$, with $Y_m = f(X_m)$.
	
	This means that when we come to compute the $M\to\infty$ limit objects like $K$ and $P(\lambda)$, we can evaluate expectations as 
	\[ \bE[g(X,Y)] = \int_0^1 g(x,f(x))\,\d x. \]
	
	\subsection{``True'' indicators vs. eigenvalues}\label{ss:Numerics1DTrue}
For $N \in \{10,20,50\}$, we computed the $N\times N$ matrix $K$ and its eigenvalues, as well as the statistics $P(\lambda)$ and $P_\sym(\lambda)$ using the algorithms in Section~\ref{s:Algorithms}. Because we are in the \iid-sampled case, we have a simple formula for $\V_\lambda[Q]$ as $\bE[\cl{\omega}^* Q \cl{\omega}] - \bE[\Cl^* W \Cl]$. 

The outputs are shown in Figure~\ref{f:1DPplots}. The figures show that $P(\lambda)$ and $P_\sym(\lambda)$ are qualitatively and quantitatively similar, with the ratio $P_\sym(\lambda) / P(\lambda)$ consistently in the range $0.3$--$1$, and approaching $1$ close to simple eigenvalues.

It also appears that for fixed $\lambda$, $P(\lambda)$ decreases as $N$ increases, implying a larger data size required to ``resolve'' the spectrum. This effect is stronger as $|\lambda|$ approaches the centre of the unit circle. This suggests that in absence of regularisation, adding more observables to the dictionary can cause the Koopman spectrum to quickly become unstable: this can be understood as overfitting, but with different sensitivity in different parts of the spectrum.

	\begin{figure}[htbp]
	\centering
	\includegraphics[scale=\imgscale]{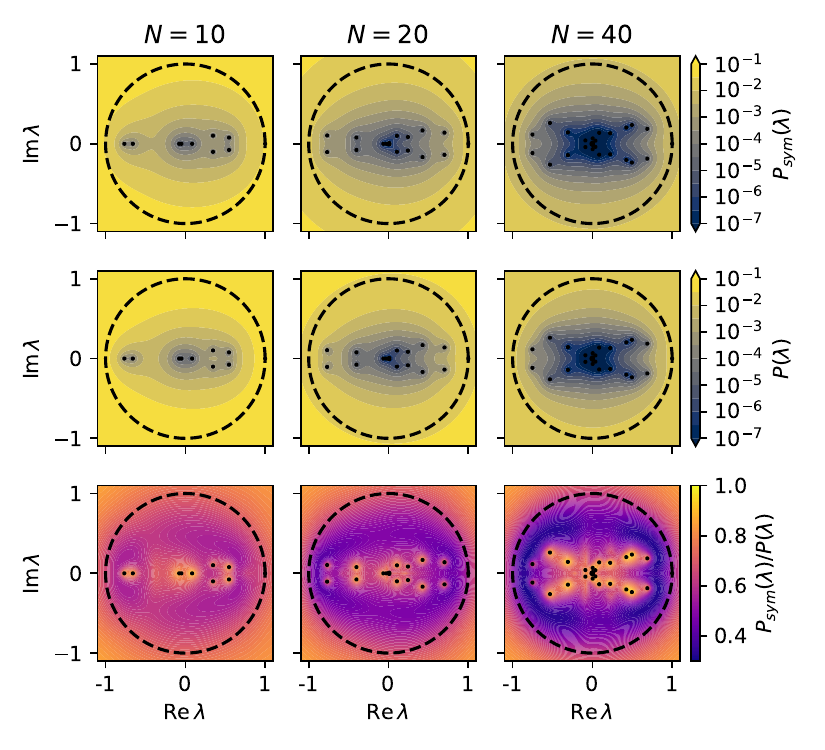}
	\caption[Comparison of $P(\lambda)$ and $P_\sym(\lambda)$]{For different values of $N$, a comparison of $P(\lambda)$ and $P_\sym(\lambda)$ for the system in Section~\ref{s:Numerics1D}. Top row: filled contour plot of indicator $P_\sym(\lambda)$; middle row: filled contour plot of indicator $P(\lambda)$; bottom row: heatmap of their ratio. In black, the eigenvalues of the corresponding infinite-data Koopman matrices.}
	\label{f:1DPplots}
\end{figure}

Against these, we simulated $K^M$ for various values of $M$ and $N$, and plotted their spectra against $P_\sym(\lambda)$ and $P(\lambda)$ (see Figure~\ref{f:1DPvsdataeigs}). Here we see that, despite highly variable distributions of eigenvalues for different $M$ and $N$, $P_\sym(\lambda)$ is a fairly efficient indicator of where eigenvalues are likely to be found: recall from Theorem~\ref{t:StatTest} that the number of data-driven eigenvalues $\lambda_{K^M}$ with $P_\sym(\lambda_{K^M})>\theta$ is of the order of $Ne^{-\theta/2}$.

Being more precise, we see that $P_\sym(\lambda)$ is a highly efficient measure of the spectrum when the error in the data-driven eigenvalues is distributed along one particular direction (for example, real eigenvalues of $K^M$, which we can see in most plots poking out of the cloud). On the other hand, it tends to somewhat overestimate the distribution of more isotropically distributed eigenvalues. For example, of the $100N = 2000$ eigenvalues in Figure~\ref{f:1DPvsdataeigs}, none have $MP_\sym(\lambda) > 2$. However, if we consult Theorem~\ref{t:EigsInCurve} bottom left and conservatively ignore any subexponential terms, we would expect around $100 \times e^{-1} \approx 36$ of them. Nevertheless, if they overestimate the true exponential rates of eigenvalue deviation by a factor of 2--4, particularly for perturbations away from the real line, $P_\sym(\lambda)$ and $P(\lambda)$ qualitatively bound the location of the data-driven eigenvalues quite accurately.
	
\begin{figure}[htbp]
	\centering
	\includegraphics[scale=\imgscale]{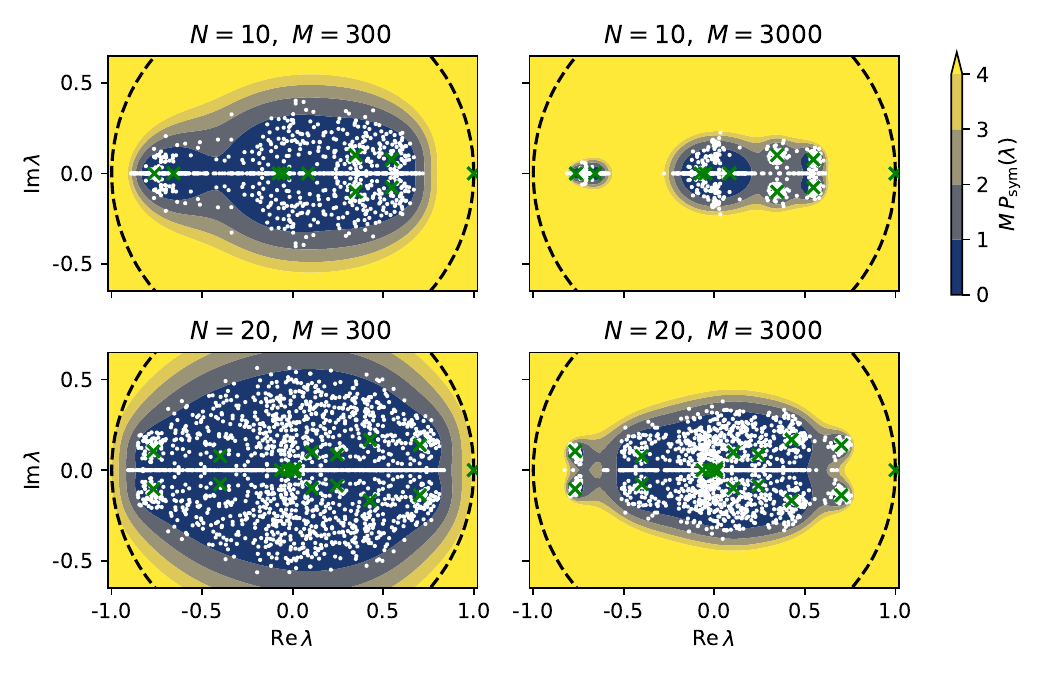}
	\caption[Plot of $P_\sym(\lambda)$ vs data-driven eigenvalues]{In white, eigenvalues of 100 Koopman matrices for the system in Section~\ref{s:Numerics1D} sampled with varying $M$. In red, eigenvalues of the true ($M\to\infty$) Koopman matrix. Background: filled contour plot $M P_\sym(\lambda)$.}
	\label{f:1DPvsdataeigs}
\end{figure}

\subsection{Data-driven $P^M$ vs. eigenvalues}\label{ss:Numerics1DData}

As we know the true eigenvalues here, we can check the results of Theorem~\ref{t:StatTest}. The first part of this theorem suggests that $\hat P(\lambda)$ is a good approximation of $P(\lambda)$, and thus of where eigenvalues are separable.

In Figure~\ref{f:1DhatPMversions}, a contour plot of $M \hat P(\lambda)$ at low levels is plotted against $M P(\lambda)$ for four different realisations. In all cases $M \hat P(\lambda)$ broadly captures the shape of $M P(\lambda)$, and correctly clusters of eigenvalues into different basins, with the likelihood depending on the depth of the basin: the cluster of two eigenvalues to the left appears in all the realisations; of the remaining ones a weak separation of the right two and, separately, the middle two, occurs in three out of the four realisations. For a comparison of the eigenvalue distributions across multiple basins, see the upper-right of Figure~\ref{f:1DPvsdataeigs}.

\begin{figure}[htbp]
	\centering
	\includegraphics[scale=\imgscale]{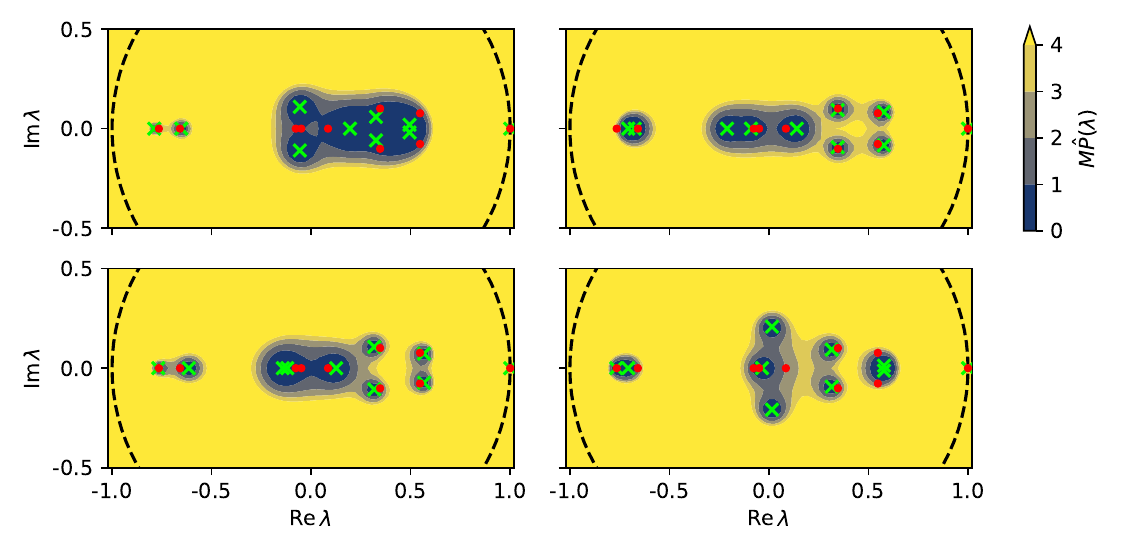}
	\caption[Plot of $M \hat P(\lambda)$ for different realisations]{For four different realisations, eigenvalues of $K^M$ (green crosses) and contour plot of $M \hat P(\lambda)$ (filled area). For comparison, eigenvalues of the true ($M\to\infty$) Koopman matrix (red dots) and corresponding contour levels of $MP(\lambda)$ (red dotted lines). The system in Section~\ref{s:Numerics1D} is used with $N=10$, $M=3000$.}
	\label{f:1DhatPMversions}
\end{figure}

The second part of Theorem~\ref{t:StatTest} suggests that true simple eigenvalues $\lambda$ of the Koopman operator tend to have low values of the data-driven indicator $\hat P(\lambda)$, and in fact the distribution of these values can be bounded. This was done in Figure~\ref{f:1DStatTest}, and it indeed appears to be the case.
 
\begin{figure}[htbp]
	\centering
	\includegraphics[scale=\imgscale]{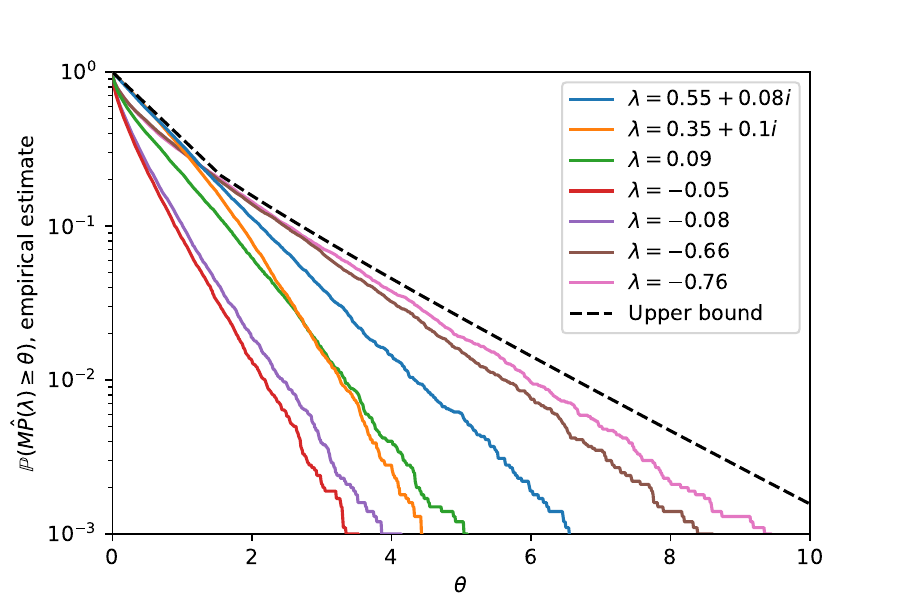}
	\caption[Distribution of $M \hat P(\lambda)$ for $\lambda \in \sigma(K)$]{Empirical estimate of $\bP(\hat M P(\lambda) \geq \theta)$ for different eigenvalues of the true Koopman operator with $N=10$, $M=3000$ for the system in Section~\ref{s:Numerics1D}. $10,000$ realisations were used.}
	\label{f:1DStatTest}
\end{figure}

%

\section{Example 3: Lorenz-63 system}\label{s:NumericsL63}

Estimation of $\hat P(\lambda)$ can be done very effectively in more complex systems as well. In this case we consider the well-known Lorenz-63 system of 3 ODEs \cite{Lorenz63}:
\begin{align*}
	\dot x &= 10(y-x)\\
	\dot y &= x(28-z) -y \\
	\dot z &= xy - \tfrac 83 z
\end{align*}
This system generates a strange attractor consisting of two ``loops'' split by a saddle point that they both flow back into (see Figure~\ref{f:L63Eigenfunctions1}). 

To study the Ruelle--Pollicott resonances, we sampled a time series of the system at $M=10,000$ instances, separated by timestep $\delta t = 0.2$. We used an EDMD observable dictionary of polynomial delay variables:
\[ \{1\} \cup \left\{ (x^i y^j z^k) \circ f^{-d} : i,j,k \in \mathbb{N},\, 1 \leq i+j+k \leq 3,\, d \in \{ 0,\ldots 9\}\right\}, \]
where $f$ is the time-$\delta t$ map of the Lorenz-63 system. This gives a total of $N = 151$ observables. As is typical, the EDMD eigenvalues have a central bulk comprising most of the eigenvalues, with a few isolated eigenvalues on the edge (see Figure~\ref{f:L63PhatM}).

We computed $\hat P(\lambda)$ for a range of $\lambda$, using the sampling kernel suggested in Section~\ref{s:Variance}: the parameters we used were $L_M = 20$, and our sampling eigenvalues were the top $9$ non-unit eigenvalues of our EDMD Koopman approximation.

\begin{figure}
	\centering
	\includegraphics[scale=\imgscale]{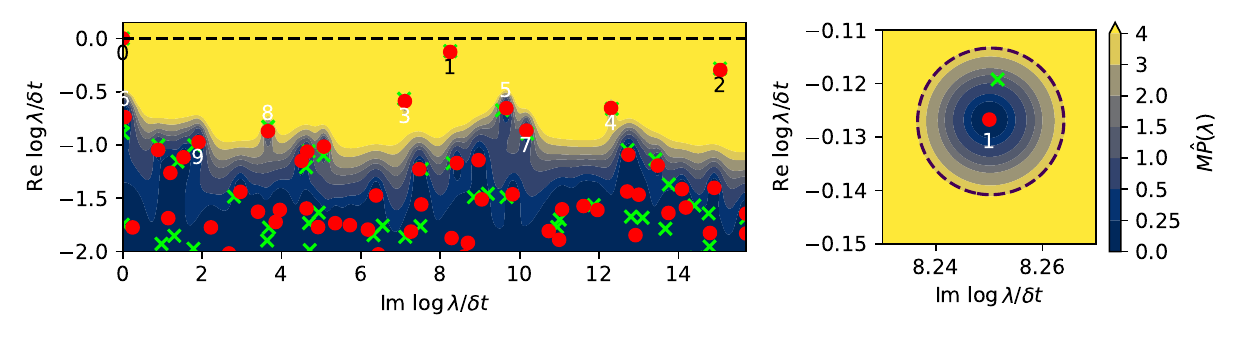}
	\caption{Left: $M \hat P$ landscape from Lorenz-63 system computed in Section~\ref{s:NumericsL63}; right: detail around eigenvalue \#1 plot. Both eigenvalues of a finite-data approximation ($M = 10,000$, red dots) and of the infinite-data limit (estimated by $M = 10^7$, green crosses) are shown. In right, the boundary of a 95\% confidence region for eigenvalue \#1 described in the text is plotted in purple.}
	\label{f:L63PhatM}
\end{figure}

The computed $M \hat P(\lambda)$ are plotted in Figure~\ref{f:L63PhatM}. We can make some observations:
\begin{itemize}
	\item For low $\Re h^{-1} \log \lambda$, i.e. small $|\lambda$|, there is a large, continuous continuous area of small $\hat P(\lambda) \ll 1$ containing most of the operator eigenvalues, specifically the unlabelled ones and those labelled \#5--9. The infinite-data eigenvalues of the EDMD matrix are found throughout this region, somewhat independently of the finite data points. Hence, these data-driven eigenvalues and corresponding eigenvectors are unlikely to reveal much of interest.
	\item As $|\lambda| \to 1$, $M \hat P(\lambda)$ grows substantially larger than $1$, indicating the unlikeliness of true eigenvalues being in this area. In this area, there are ``islands'' of small $M \hat P(\lambda)$, each containing a single finite-data eigenvalue, which we can see is matched to a single true eigenvalue.
\end{itemize}

Because $\hat P$ estimates $P$ (Theorem~\ref{t:StatTest}), and because $M P$ approximately bounds the negative log-likelihood of finite-data eigenvalues running away from their limits (Theorem~\ref{t:EigsInCurve}), we can therefore conclude that the eigenvalues \#0--4 are likely to be reflective of the infinite-data eigenvalues, and we should include them as part of any further study or dimension reduction of the system using Koopman eigenfunctions. On the other hand, we should be more suspicious of eigenvalues \#5--9. This is indeed borne out by comparing to the infinite-data eigenvalues (green crosses in Figure~\ref{f:L63PhatM}), notwithstanding that eigenvalue \#7 in particular matches a true eigenvalue quite well. 


	\begin{figure}[htb]
	\centering
	\includegraphics[scale=\imgscale]{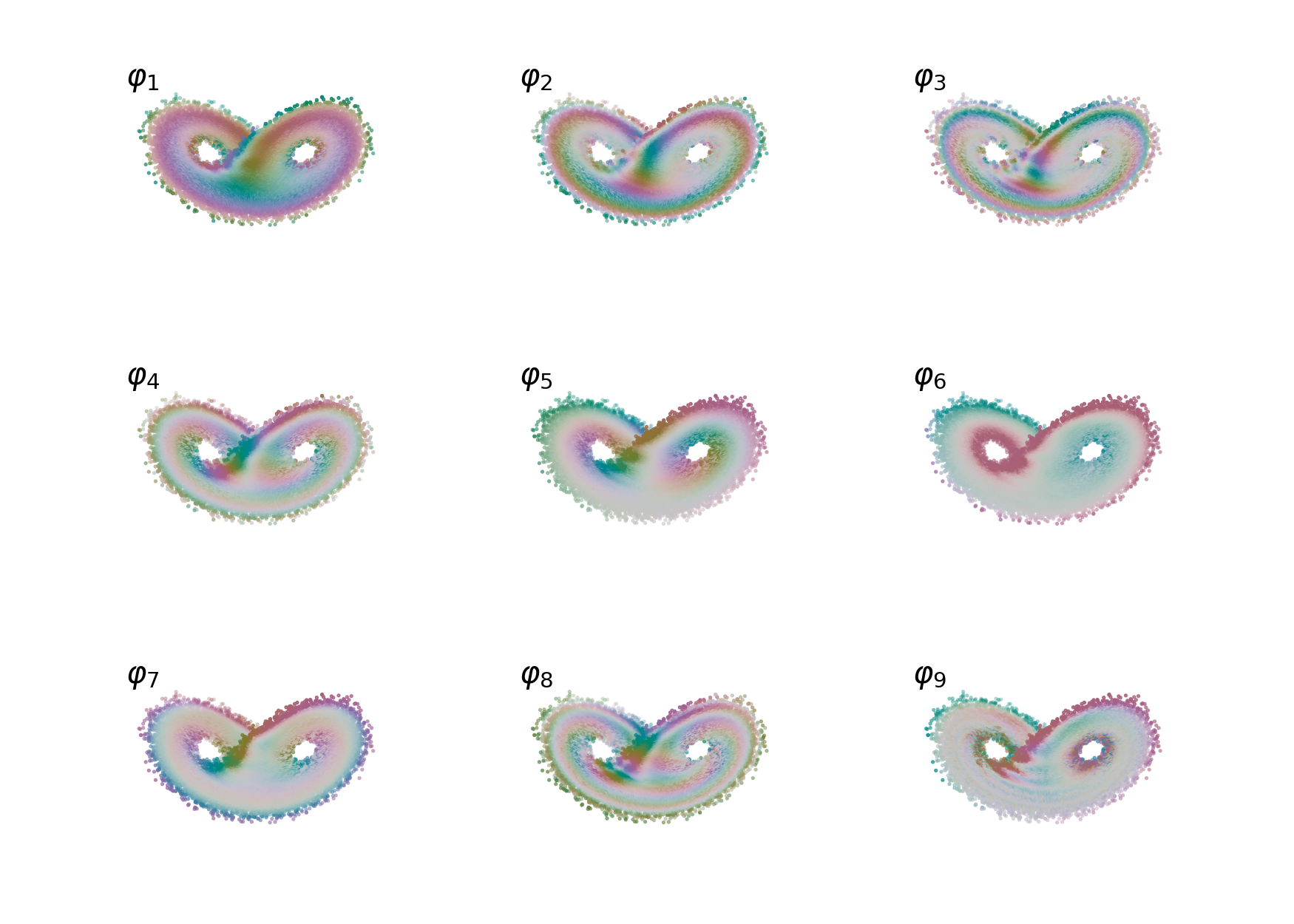}
	\caption{Leading EDMD eigenfunctions of the Lorenz-63 system computed in Section~\ref{s:NumericsL63}. Hue corresponds to complex argument, and saturation to complex modulus.}
	\label{f:L63Eigenfunctions1}
\end{figure}

\begin{figure}[htb]
	\centering
	\includegraphics[scale=\imgscale]{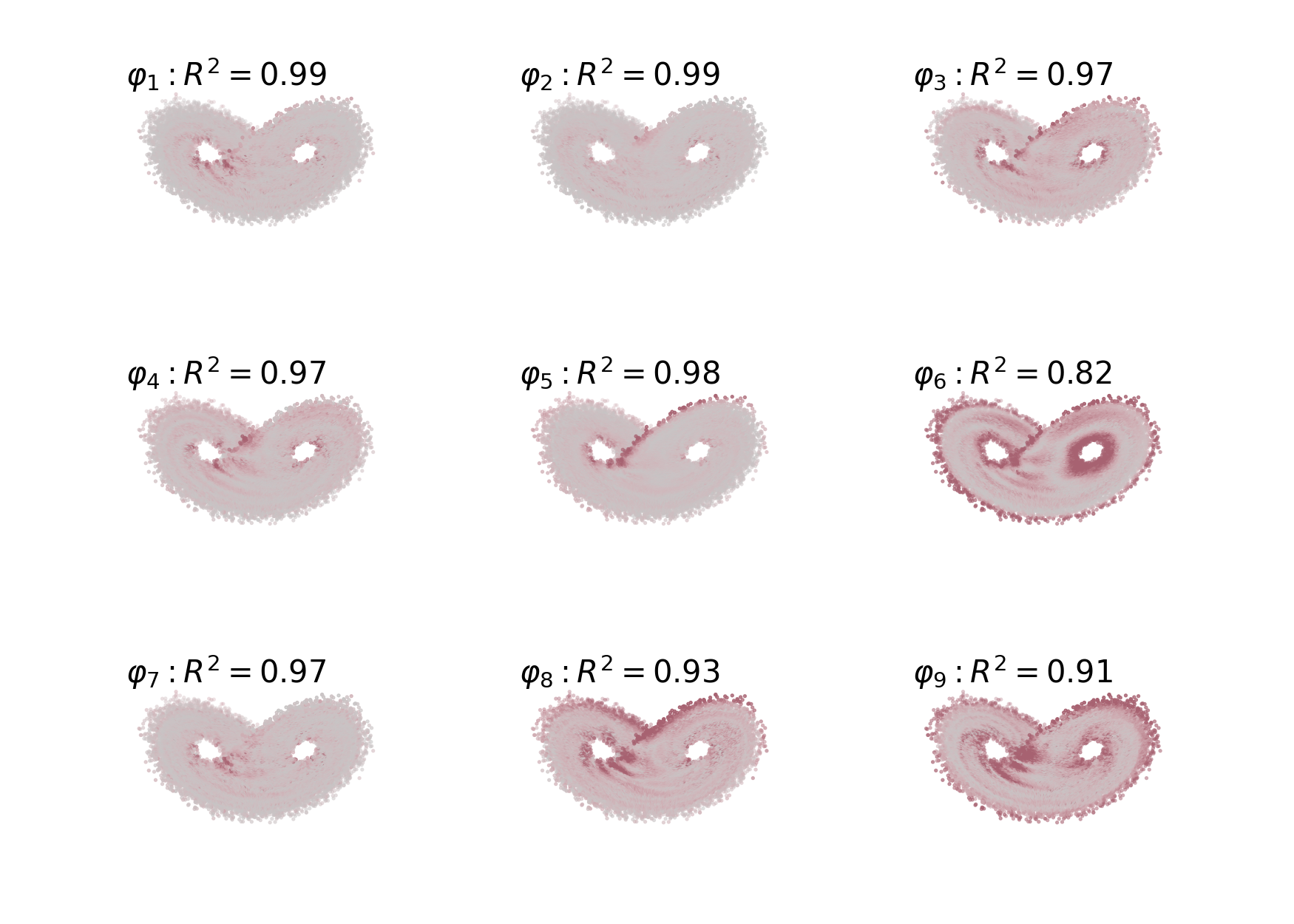}
	\caption{Least-squares residuals between large-data ($M=10^7$) and fixed-data ($M = 10,000$) eigenfunctions of the Lorenz-63 system computed in Section~\ref{s:NumericsL63}. Eigenvalues of $M=10,000$ are compared to the closest eigenvalue of the $M=10^7$ setting, which may be non-unique.}
	\label{f:L63Eigenfunctions2}
\end{figure}

The data-driven eigenvectors are plotted in Figure~\ref{f:L63Eigenfunctions1}. It is interesting to remark on the eigenfunction estimates obtained using EDMD. Because the Lorenz system exhibits highly regular periodic behaviour, the argument of the leading non-trivial eigenfunction (\#1) approximates an isochron function going around the loops. Because Theorem~\ref{t:StatTest} gives that $\lim_{M\to\infty} \bP(M\hat P \geq 3.84) \leq 0.05$, we can use the $P_M$ landscape to plot an approximate 95\% confidence region for the leading non-trivial eigenvalue $\lambda_1$ (Figure~\ref{f:L63PhatM}, right). In particular, we can conclude that $| \lambda_1 - (-0.127 + 8.25i) | \leq 0.015$. We can therefore say from the data that, finite dictionary error aside, the Lorenz attractor exhibits periodic behaviour with period $2\pi/\Im \lambda \in [0.760,0.762]$, which breaks down on timescale $T_{1/e} = 1/\Re \lambda \in [7.0,9.0]$. This is confirmed by a much larger simulation (green cross in Figure~\ref{f:L63PhatM}, right).

Beyond this, many of the other eigenvalues also contain a similar isochron-like component. In particular, eigenfunctions \#2 and \#3 are {\it approximately} second and third powers of the complex conjugate of the first eigenfunction (i.e. the eigenfunction of the complex conjugate of eigenvalue \# 1), and when reflection of the spectrum about $\Im z = \pi \delta t$ due to the sampling rate is taken into account, these eigenvalues are respectively around twice and three times the conjugate of eigenvalue \#1. This kind of approximate multiplicative structure is interesting as {\it bounded} Koopman eigenfunctions are known to have a multiplicative structure \cite{Koopman31, Mezic05}: however, the generalised eigenfunctions associated with Ruelle--Pollicott resonances of true Koopman operator \cite{Melbourne18}, which EDMD is expected to capture \cite{Wormell25}, are expected to be hyperdistributions with no multiplicative structure \cite{Baladi18}.

While we haven't explicitly proven a relationship between $M \hat P$ and eigenvector error, one seems evident. The discrepancy between the finite-data and the infinite-data eigenfunctions are plotted in Figure~\ref{f:L63Eigenfunctions2}, as well as a measure of their correlation ($R^2$). There is a clear relationship between the saddlepoint value of $M \hat P$ separating the eigenvalue from other eigenvalues and the correlation between the true and finite-data eigenvector. In particular, eigenvectors whose eigenvalues are better separated in $M \hat P$ reflect the true eigenvectors much more clearly (as demonstrated by a high $R^2$), in approximate relation to the level of the saddle. In particular, eigenfunction \#7, separated by a saddle of height approximately $M\hat P \approx 0.4$, is better resolved than the eigenfunctions \#5, 6, 8, 9. These exhibit behaviours not expected in a Koopman eigenfunction: their magnitude oscillates along the direction of the flow, and contain the same dynamical information up to linear relationships, with \#5 and \#6 appearing near-duplicates of each other.

\section{Example 4: Rayleigh--B\'enard convection}\label{s:NumericsRB}
	
We now test the method on a more complex problem, namely Rayleigh-B\'enard convection \cite{Berge84}, a classic fluids problem where an incompressible fluid is heated from the bottom in a tank. We will model a setting where this system is poorly observed (i.e. from a restricted dictionary) over a relatively long time.

In particular, we considered a 2D tank of size $1\times 1.5$ (i.e. taller than it was wide). Temperatures at top and bottom were fixed with a temperature difference of $1$. The fluid parameters were Prandtl number $0.71$, Rayleigh number $4 \times 10^7$, and Gebhart number $1.0$.

This system at equilibrium has two rotating cells of fluid stacked vertically on top of each other: in this simulation, the top cell rotated clockwise and the bottom cell rotated anticlockwise.

We simulated this system by taking $M=6,\!000$ samples at a time step of $\Delta t = 1.1$, using the Julia package {\tt IncompressibleNavierStokes.jl}. Fluid systems are sometimes observed using rods inserted into the simulation: to simulate this, our observable dictionary consisted of the temperature field at heights $0.075, 0.5, 0.925$ at 4 evenly spaced horizontal points each. We augmented this by including delay variables up to 20 steps and including a constant function, for a total of $N = 3\times 4 \times 20 + 1 = 161$ observables.

From this data, we computed the EDMD eigenvalues and sampling pseudospectrum. To compute $\hat V$ we used the method in Section~\ref{s:Variance}, using the top $4$ nontrivial EDMD eigenvalues and a general lag window of $L=200$. These are plotted in Figure~\ref{f:RBPhatM}. As usual, we see a cloud of eigenvalues in a single large cluster, with some isolated eigenvalues resolved. Apart from the trivial eigenvalue \#0, we have distinct eigenvalues \#1, \#2, \#3 and \#5 separated from the main cluster by saddles of $\hat P(\lambda) \geq 1$. From Figure~\ref{f:RBmodes} (which is generated from information we would not be able to observe in our experiment), the first three appear to correspond to Fourier modes of the snaking of the jet. The fifth mode appears to correspond to the production of blobs of hot fluid from the corner cells, which rotate around the centre of the bottom cell.

		\begin{figure}[htb]
	\centering
	\includegraphics[scale=\imgscale]{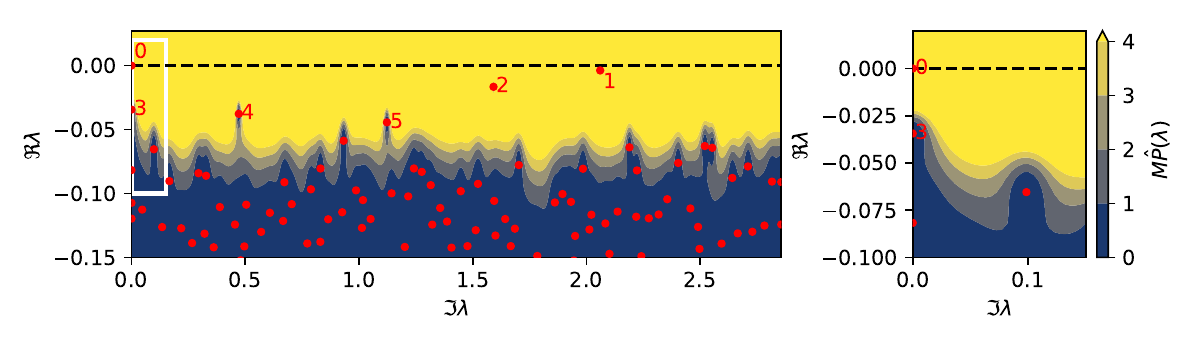}
	\caption{Eigenvalues and sampling pseudospectrum of the Rayleigh--B\'enard system computed in Section~\ref{s:NumericsRB}, plotted under transformation to continuous time. Right: inset of spectrum close to $0$}
	\label{f:RBPhatM}
\end{figure}

		\begin{figure}[h!]
	\centering
	\includegraphics[scale=\imgscale]{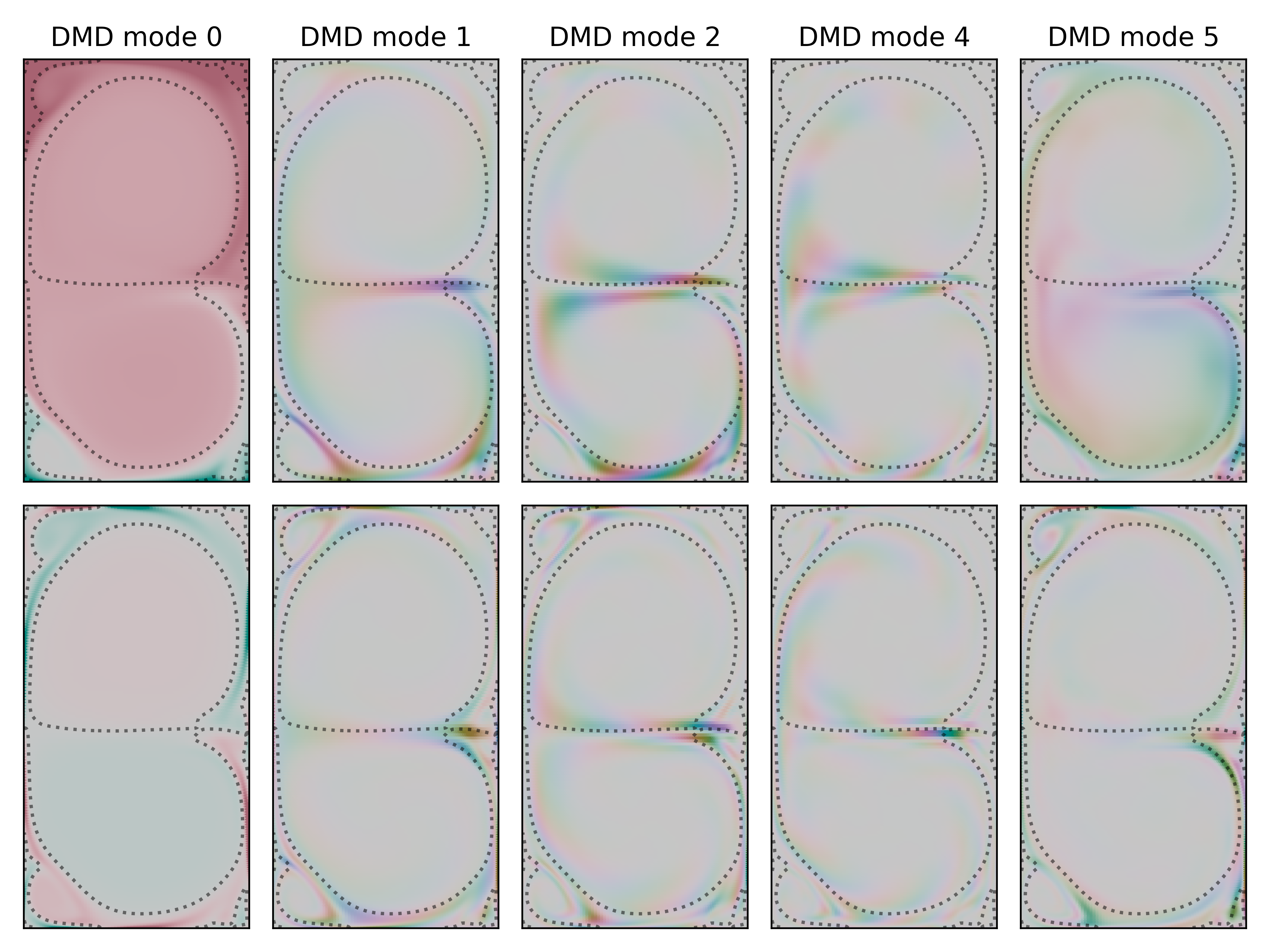}
	\caption{Leading DMD modes of the Rayleigh--B\'enard system computed in Section~\ref{s:NumericsRB}, plotted as average configurations of the flow (top: temperature; bottom: vorticity). Saturation corresponds to eigenfunction modulus, and hue to angle. Lines of zero vorticity are plotted for comparison (grey dots). }
	\label{f:RBmodes}
\end{figure}

	\section{Conclusion}\label{s:Conclusion}
	
	In this paper we have proposed a sampling pseudospectrum $P(\lambda)$, with an estimator $\hat P(\lambda)$ that can be computed from data, and is furnished with a statistical test for eigenvalues. These can be respectively used to find the location of finite-data eigenvalues given infinite-data information, and the infinite-data eigenvalues given a finite sample. We have seen in examples that our indicators do this relatively efficiently: they give estimates largely within a factor of two of the true log-likelihood, which is an excellent outcome given that the unpredictability of the conditioning of the operators we study.
	
	Furthermore, our pseudospectra appear to be useful in identifying which finite-data eigenvectors are good representations of the true ones (Sections~\ref{s:NumericsL63} and~\ref{s:NumericsRB}). This may be related to the fact that the joint eigenspace of infinite-data eigenvalues in $E'$ is given as the range of the projector
	\[ \Pi_{E'} = \int_{\partial E'} \Cl^{-1} \d \lambda, \]
	the finite-data analogue is 
	\[\hat \Pi_{E'} = \int_{\partial E'} \hCl^{-1} \d \lambda =  \int_{\partial E'} (I + \Cl^{-1} \tCl)^{-1} \Cl^{-1}  \d \lambda, \]
	noting that $P_\sym(\lambda)$ measures the likelihood that  $\|\Cl^{-1} \tCl\| = 1$ in some norm (see Proposition~\ref{p:BernsteinBound}). Understanding to what extent this can be formulated rigorously would be an important future direction.
	
	Crucially for applications, we have efficient numerical methods to compute these functions in the same order of time it takes to compute the matrices themselves (Sections~\ref{s:Algorithms} and~\ref{s:Variance}). These use the cone structure of positive definite matrices to reformulate the max--min definition of the pseudospectrum \eqref{eq:Pdef} as the spectral radius of a linear operator on matrices.
	
	We have implemented the methods in this paper in the Julia package SamplingPseudospectrum.jl \cite{SP}.

	The ideas in this paper are very general. They can used to solve a large range of generalised eigenvalue problems. In particular, it would be possible to include Tikhonov-style regularisation terms, which would allow $M < N$, i.e. more observables than data points. By expanding from a vector space of dimension $N$ to RKHS, quantification of operators estimated from kernels, such as in kernel EDMD \cite{Williams15}, would then also be possible. Other kinds of problems this could be applied to include data sampled irregularly in time, by studying characteristic functions of the form
		\[ \Cl[\omega] = \pX(\omega) \left( e^{\lambda \Delta t(\omega)} \pX(\omega)^* - \pY(\omega)^*\right). \]

\subsection*{Acknowledgements}

This work was supported by the Australian Research Council Discovery Early Career Award DE260101080.

\appendix

\section{Rule of thumb from Section \ref{s:ProbabilisticBounds}}\label{a:RuleOfThumb}

We will try and estimate $R_{\partial F}$ under the assumption that the size of $\cl{\omega}$ is relatively uniform. If so, the essential supremum in the definition \eqref{eq:RpartialF} of $R_{\partial F}$ is of the same order as a corresponding $L^2$ estimate:
\[ 	R_{\partial F}^2 = \sup_{\lambda \in \partial F} \inf_{\substack{Q\succ 0\\ P_\sym(\lambda,Q) \geq P^*}} \esssup_{\omega \in \Omega} \| \Cl^{-1} \cl{\omega} - I \|_Q^2 \approx \sup_\lambda \inf_{P_\sym(\lambda,Q)\geq P^*} \bE[\| \Cl^{-1} \cl{\omega} - I \|_Q^2] =: R_{L^2}\]

Expanding the definition of the $\| \cdot \|_Q$ operator norm, we get
\begin{align*}
	\bE[\| \Cl^{-1} \cl{\omega} - I \|_Q^2] = \bE[ \| Q^{-1/2} (\Cl^{-1} \cl{\omega} - I)^{-1} Q (\Cl^{-1} \cl{\omega} - I) Q^{-1/2} \|]\end{align*}
We can bound the standard operator norm by the trace (in fact, for rank-one $\cl{\omega}$ this is an equality), giving 
\begin{align*}
	\bE[\| \Cl^{-1} \cl{\omega} - I \|_Q^2] &\leq \bE[ \tr Q^{-1/2} (\Cl^{-1} \cl{\omega} - I)^{-1} Q (\Cl^{-1} \cl{\omega} - I) Q^{-1/2} ]
\end{align*}
By the independence assumed in Theorem~\ref{t:EigsInCurve}, this is equal to
\begin{align*} \tr Q^{-1/2} \S_\lambda[Q] Q^{-1/2} 
	&= \sum_{n=1}^N \bv_n^* Q^{-1/2} \S_\lambda[Q] Q^{-1/2} \bv_n \\
	&= \sum_{n=1}^N \mu_n^{-1} \bv_n^* \S_\lambda[Q] \bv_n \\
\end{align*}
for $(\bv_n, \mu_n)$ an orthonormal eigenbasis of $Q$.

Now the definition of $P(\lambda,Q)$ \eqref{eq:Pdef}, we have that 
\[ \bv_n^* \S_\lambda[Q] \bv_n \leq P(\lambda,Q)^{-1} \bv_n^* Q \bv_n = P(\lambda,Q)^{-1} \mu_n, \]
so
\[ \tr Q^{-1/2} \S_\lambda[Q] Q^{-1/2}  \leq \sum_{n=1}^N P(\lambda,Q)^{-1} = \frac N{P(\lambda,Q)}. \]
This means that 
\[ R_{\partial F} \approx R_{L^2} \leq \sup_{\lambda \in \partial F} \inf_{\substack{Q\succ 0\\ P_\sym(\lambda,Q) \geq P^*}} \sqrt{\frac{N}{P(\lambda,Q)}} \leq \sqrt{\frac{N}{P_*}},\]
recalling that $P_\sym(\lambda,Q)\leq P(\lambda,Q)$.

As a result, under some common we (non-rigorously) expect the correction in \eqref{eq:EigsInCurveLimit} $ 1 + P^* R_{\partial F}/3$ not to dominate $M P^*/2$ provided 
\[ P^* R_{\partial F} \approx \sqrt{P^* N} \ll M P^*/2,\]
i.e. $M \gg 2\sqrt{N/P^*}$.

\section{Proof of Theorem~\ref{t:EigsInCurve}} \label{a:EigsInCurve}
The proof of Theorem~\ref{t:EigsInCurve} is based on making sure that $\| \Cl^{-1} \tCl \|_Q$ is smaller than $1$ along the boundary of $F$ and therefore no eigenvalues can cross into or out of $F$ (Proposition~\ref{p:CountingEigs}). To do this, it uses a matrix Bernstein bound for fixed $\lambda, Q$ (Proposition~\ref{p:BernsteinBound}) and uses a union bound on a net of $\lambda$ plus some continuity results (Lemmas~\ref{l:PsymPerturbation} and \ref{l:FiniteNumberofQ}) to extend this more broadly.

The first lemma gives us a pseudospectral counting bound:
\begin{proposition}\label{p:CountingEigs}
	Suppose that for all $\lambda \in \partial F$ there exists $Q$ such that $\| \Cl^{-1} \tCl \|_Q < 1$. Then, $\#(\Sigma,F) = \#(\hat \Sigma,F)$.
\end{proposition}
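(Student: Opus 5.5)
The plan is a homotopy argument combined with the argument principle, which is the standard route for pseudospectral eigenvalue-counting results. For $t \in [0,1]$ set
\[ C_\lambda(t) := \Cl + t\tCl = \Cl\left(I + t\,\Cl^{-1}\tCl\right), \]
so that $C_\lambda(0) = \Cl$ and $C_\lambda(1) = \hCl$. Each $C_\lambda(t)$ is analytic in $\lambda$ on $E$, because $\Cl$ and $\hCl$ are averages of analytic, uniformly bounded matrix functions. By assumption, for every $\lambda \in \partial F$ there is some $Q \succ 0$ with $\|t\,\Cl^{-1}\tCl\|_Q \le \|\Cl^{-1}\tCl\|_Q < 1$. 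A Neumann series argument then shows that $I + t\,\Cl^{-1}\tCl$ is invertible. The hypothesis also forces $\Cl$ itself to be invertible on $\partial F$, since the expression $\Cl^{-1}$ must make sense there. Hence $\det C_\lambda(t) \neq 0$ for all $\lambda \in \partial F$ and all $t \in [0,1]$. The choice of $Q$ may depend on $\lambda$; this does no harm, because only pointwise invertibility is needed.

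Next, I would count multiplicities through the determinant. For an analytic matrix function, the algebraic multiplicity of a point of $\Sigma$, defined as the rank of the Riesz projection, should equal the order of the zero of $\det \Cl$ at that point, which follows from a local Smith form or Keldysh factorisation. For the linear pencil $\lambda \PXX - \PXY$ this is immediate. So
\[ \#(\Sigma_t, F) = \frac{1}{2\pi i}\oint_{\partial F} \frac{\partial_\lambda \det C_\lambda(t)}{\det C_\lambda(t)}\,\d\lambda = \frac{1}{2\pi i}\oint_{\partial F} \tr\!\left(C_\lambda(t)^{-1}\,\partial_\lambda C_\lambda(t)\right)\d\lambda. \]
This uses the argument principle on the rectifiable boundary $\partial F$ of the closed set $F$, which is valid because the integrand has no zeros on $\partial F$.

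Then I would show the count does not depend on $t$. The map $(\lambda,t) \mapsto \det C_\lambda(t)$ is continuous and nonvanishing on the compact set $\partial F \times [0,1]$, so it is bounded away from zero there. The integrand is therefore continuous in $t$, uniformly in $\lambda \in \partial F$, and the count is a continuous integer-valued function of $t$, hence constant. Setting $t=0$ and $t=1$ gives $\#(\Sigma,F) = \#(\hat\Sigma,F)$.

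As an alternative that avoids determinants entirely, I could use the Riesz projection directly, using the expansion $\hCl^{-1} = (I+\Cl^{-1}\tCl)^{-1}\Cl^{-1}$ noted in the conclusion. Its rank is continuous in $t$ because the projections vary continuously in norm, and nearby projections have equal rank.

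The main obstacle is the identification, for a general analytic $\Cl$, of the paper's Riesz-projection multiplicity with the zero order of the determinant. I would handle it with Gohberg–Sigal theory, or simply note it is immediate for the affine-in-$\lambda$ pencils of main interest. A secondary technicality is the case where $\det \Cl$ vanishes identically, but that cannot occur here since $\Cl$ is invertible on $\partial F$.
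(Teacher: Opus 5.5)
Your proposal is correct and is essentially the paper's proof. Both use the homotopy $(1-t)\Cl + t\hCl = \Cl(I + t\Cl^{-1}\tCl)$, invertibility on $\partial F$ from the Neumann series with $\|\Cl^{-1}\tCl\|_Q<1$, and the argument principle under deformation applied to the determinant; your compactness argument just makes the paper's deformation step explicit. One caution on the step you flagged: for genuinely nonlinear analytic $\Cl$, the rank of $\frac{1}{2\pi i}\oint \Cl[z]^{-1}\,\d z$ need not equal the zero order of $\det\Cl$ (the scalar $\Cl=\lambda^2$ gives rank $0$ but order $2$), so Keldysh/Gohberg--Sigal justifies counting by determinant zero order (the sum of partial multiplicities), which is what both you and the paper actually use, while the Riesz-rank definition is only guaranteed to match it for regular linear pencils such as $\lambda\PXX-\PXY$, i.e.\ your fallback.
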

\begin{proof}
	Because $\lambda \mapsto \cl{\omega}$ is analytic and uniformly bounded on $E$, so is $\lambda \mapsto \Cl$, and so therefore is $d_t(\lambda) := \det ((1-t) \Cl + t \hCl)$ for any $t \in [0,1]$. Now, on $\partial F$,  $(1-t) \Cl + t \hCl = \Cl (I  + t\Cl^{-1} \tCl)$. Since $P^* > 0$ on $\partial E$, by Proposition~\ref{p:PsymEigval} $\Cl$ is invertible on $\partial E$. This means that $d_t(\lambda) \neq 0$ if $(I  + t\Cl^{-1} \tCl)$ is invertible. This occurs for $|t|<1$ if $\| \Cl^{-1} \tCl\|_Q  < 1$, which is the case by assumption. 
	
	So for all $t \in [0,1]$, $d_t(\lambda) \neq 0$ on $\partial F$, and therefore by the argument principle under deformation, the number of zeros of $d_0(\lambda) = \det \Cl$ in $F$ is the same as that of $d_1(\lambda) = \det \hCl$. These zeros correspond respectively to the sets $\Sigma$, $\hat \Sigma$ counting algebraic multiplicity, proving the result. 
\end{proof}

\begin{lemma}\label{l:PsymPerturbation}
	For all $Q \succ 0$, $\lambda \mapsto P_\sym(\lambda, Q)$ and $\lambda \mapsto P(\lambda,Q)$ are continuous on $E \backslash \Sigma$.
\end{lemma}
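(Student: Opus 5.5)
The plan is to write each quantity as a composition of continuous maps in $\lambda$, for fixed $Q \succ 0$.

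\textbf{Step 1: continuity of the building blocks.} Since $\lambda \mapsto \cl{\omega}$ is analytic and uniformly bounded on $E$, dominated convergence shows that $\lambda \mapsto \Cl = \bE[\cl{\omega}]$ is analytic, in particular continuous. On $E \backslash \Sigma$ the matrix $\Cl$ is invertible, so $\lambda \mapsto \Cl^{-1}$ is continuous there, because matrix inversion is continuous on invertible matrices.

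\textbf{Step 2: continuity of $\lambda \mapsto \V_\lambda[B]$, uniformly for $B$ in bounded sets.} This is the main obstacle. By Proposition~\ref{p:VAsCorrelationSum}, $\V_\lambda[B]$ is a Ces\`aro sum of correlations
\[ \bE[\cl{\omega}^* B \cl{\theta^t(\omega)}] - \Cl^* B \Cl. \]
Each term is continuous in $\lambda$, but the Ces\`aro limit need not preserve continuity without some uniformity.

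What I would try first is to show $\lambda$-uniform convergence of the Ces\`aro partial sums, i.e.\ a locally uniform version of Assumption~\ref{as:birkhoff}, and then pass continuity through the limit. The natural route is to use analyticity in $\lambda$. The snapshot matrices are analytic and uniformly bounded, so the finite-$M$ quantities
\[ M\,\bE[\tCl^* B \tCl] \]
are analytic in $\lambda$ and in $\bar\lambda$ jointly, as polarisations. If these are locally uniformly bounded in $M$, a Vitali/Montel-type argument upgrades pointwise convergence to locally uniform convergence, and hence gives a continuous limit. In the independent case relevant to Theorem~\ref{t:EigsInCurve} this is immediate, since
\[ \V_\lambda[B] = \bE[\cl{\omega}^* B \cl{\omega}] - \Cl^* B \Cl, \]
which is continuous by dominated convergence. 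I would state at least that case cleanly and note the general case under the uniformity above. Since $\V_\lambda$ is linear and $N$ is finite-dimensional, continuity for each fixed $B$ together with a local bound on operator norms gives joint continuity of $(\lambda,B) \mapsto \V_\lambda[B]$.

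\textbf{Step 3: continuity of $\S_\lambda[Q]$ and $\S_\lambda^*[Q^{-1}]$.} It follows that
\[ \lambda \mapsto \S_\lambda[Q] = \V_\lambda[(\Cl^{-1})^* Q \Cl^{-1}] \]
is continuous on $E \backslash \Sigma$. The same argument applies to $\S_\lambda^*[Q^{-1}]$, since \eqref{eq:Sstardef} is built in the same way from $\Cl^{-1}\tCl$.

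\textbf{Step 4: continuity of the generalised Rayleigh quotients.} By \eqref{eq:PdefS} and \eqref{eq:Psymdef}, with $Q' = \Cl^* Q \Cl$ continuous in $\lambda$, both $P(\lambda,Q)$ and $P_\sym(\lambda,Q)$ are built from quantities of the form
\[ f(A,B) = \inf_{\bv \neq 0} \frac{\bv^* A \bv}{\bv^* B \bv}, \qquad A \succ 0,\ B \succeq 0. \]
Here $f(A,B)$ is the reciprocal of $\lambda_{\max}(A^{-1/2} B A^{-1/2}) \in [0,\infty)$, taking values in $(0,\infty]$. The largest eigenvalue is continuous in $(A,B)$, and $x \mapsto 1/x$ is continuous into $(0,\infty]$, so $f$ is continuous.

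\textbf{Step 5: conclusion.} Composing Steps 1–4, and noting that the minimum of two continuous functions is continuous, $\lambda \mapsto P(\lambda,Q)$ and $\lambda \mapsto P_\sym(\lambda,Q)$ are continuous on $E \backslash \Sigma$.
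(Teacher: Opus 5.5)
Your proposal is correct and follows essentially the same route as the paper. Under independence, the paper also reduces continuity to that of an expectation of the form $\bE[(\Cl^{-1}\cl{\omega})^* Q\, \Cl^{-1}\cl{\omega}]$, using uniform boundedness of $\cl{\omega}$ and continuity of $\Cl^{-1}$ on $E\backslash\Sigma$, and then passes to the extremal generalised Rayleigh quotient, written there as the operator norm $\|Q^{-1/2}\S_\lambda[Q]Q^{-1/2}\|$, which is your $1/f(Q,\S_\lambda[Q])$. Like the paper's proof, yours is complete only in the independent setting of Theorem~\ref{t:EigsInCurve}; your remark that the correlated case would need an extra bound, locally uniform in $M$, is a fair caveat the paper leaves implicit.
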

\begin{proof}
	On this set we have $P_\sym(\lambda, Q) > 0$ from Proposition~\ref{p:PsymEigval}.  We can therefore consider $\lambda \mapsto P_\sym(\lambda, Q)^{-1}$ and its continuity. This is
	\begin{align*} P_\sym(\lambda,Q)^{-1} &= \max \left\{  \sup_{\bv \neq \bm{0}} \frac{\bv^* \S_\lambda[Q] \bv}{\bv^* Q \bv}, \sup_{\bv \neq \bm{0}} \frac{\bv^* \mathcal{S_\lambda^*}[Q^{-1}] \bv}{\bv^* Q^{-1} \bv} \right\} 		&= \max \left\{ \|Q^{-1/2} \S_\lambda[Q] Q^{-1/2}\|, \|Q^{1/2} \S_\lambda^*[Q^{-1}] Q^{1/2}\|\right\} \end{align*}
	So all that is necessary is to show that both of the functions of $\lambda \in F$ in the maximum are continuous.
	
	Now, by Proposition~\ref{p:SAsCorrelationSum} and the fact that the $\cl{\theta^m\omega}$ are independent,
	\[ Q^{-1/2} \S_\lambda[Q] Q^{-1/2} = Q^{-1/2} \bE\left[(\Cl^{-1} \cl{\omega})^* Q \Cl^{-1} \cl{\omega}\right] Q^{-1/2}  \]
	Now, by assumption $D_\lambda(\omega)$ and $\Cl$ are analytic and uniformly bounded on $E$, so they are continuous uniformly over $\omega$ in $\partial F$, a compact subset of $\interior E$. Since $\Cl$ is invertible for $\lambda \in \partial F$ (and $\omega$-independent), so is $\Cl^{-1}$. This makes that the expression inside the expectation is continuous on $\lambda \in F$.
	
	The same argument gives the corresponding result for $\|Q^{1/2} \S_\lambda^*[Q^{-1}] Q^{1/2}\|$, and a small simplification of it gives us $P(\lambda,Q)$, and we are done.
\end{proof}

\begin{lemma}\label{l:FiniteNumberofQ}
	The following are true:
	\begin{itemize}
		\item There exist a finite number of matrices $Q_k \succ 0$ such that for every $\lambda \in \partial F$, $P_\sym(\lambda,Q_k) \geq P^*$ for some $Q_k$.
		\item $R_{\partial F} < \infty$.
		\item There exists a constant $\Delta P$ such that for all $\lambda \in \partial F$ there is a $Q_k$ such that for $|\lambda' - \lambda| < \epsilon_0$, $\esssup_{\omega \in \Omega} \left\| \Cl^{-1} \cl{\omega} - \Cl[\lambda']^{-1} \cl[\lambda']{\omega} \right\|_{Q_k} < \Delta P |\lambda' - \lambda|$.
	\end{itemize}
\end{lemma}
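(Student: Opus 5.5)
The three claims will be handled by one compactness argument on $\partial F$, which is compact since it is a rectifiable closed curve. Two facts are used throughout. First, $\partial F \subset E\backslash\Sigma$, because $P_\sym > P^* \geq 0$ there and Proposition~\ref{p:PsymEigval} applies. Second, the map $\lambda \mapsto \Cl^{-1}$ is continuous and uniformly bounded on a neighbourhood of $\partial F$. This holds because $\Cl = \bE[\cl{\omega}]$ is analytic, and hence continuous, and is invertible on $\partial F$. Since invertibility is an open condition, we may fix $\epsilon_0>0$ such that $\Cl$ stays invertible with $\|\Cl^{-1}\|$ bounded on the closed $\epsilon_0$-neighbourhood $U$ of $\partial F$.

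\textbf{First bullet.} For each $\lambda_0\in\partial F$ we have $P_\sym(\lambda_0) > P^*$. Since $P_\sym(\lambda_0)$ is a supremum, there is some $Q_{\lambda_0}\succ 0$ with $P_\sym(\lambda_0,Q_{\lambda_0}) > P^*$. By Lemma~\ref{l:PsymPerturbation}, $\lambda\mapsto P_\sym(\lambda,Q_{\lambda_0})$ is continuous on $E\backslash\Sigma$. The strict inequality therefore persists on an open neighbourhood $B_{\lambda_0}$ of $\lambda_0$. These neighbourhoods cover $\partial F$, so a finite subcover $B_{\lambda_1},\dots,B_{\lambda_K}$ exists. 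The matrices $Q_k := Q_{\lambda_k}$ then satisfy the first bullet, with strict inequality, which will be convenient later.

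\textbf{Second bullet.} The $\|\cdot\|_{Q_k}$ operator norm is equivalent to the Euclidean operator norm with constant $\kappa(Q_k)^{1/2}$, the square root of the condition number of $Q_k$. This gives
\[ \|\Cl^{-1}\cl{\omega} - I\|_{Q_k} \le \kappa(Q_k)^{1/2}\bigl(\|\Cl^{-1}\|\,\sup_{\lambda,\omega}\|\cl{\omega}\| + 1\bigr). \]
The right-hand side is bounded uniformly in $\lambda\in\partial F$ and in $\omega$, because $\cl{\omega}$ is uniformly bounded by assumption and $\|\Cl^{-1}\|$ is bounded on $\partial F$. For each $\lambda\in\partial F$, some $Q_k$ is admissible in the infimum in \eqref{eq:RpartialF}. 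That infimum is therefore at most $\max_k$ of the bound above, which is finite. This proves $R_{\partial F}<\infty$.

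\textbf{Third bullet: Lipschitz bound.} For each $\lambda \in \partial F$ we use the same $Q_k$ as in the first bullet. We need a Lipschitz bound, uniform in $\omega$, on
\[ \Cl^{-1}\cl{\omega} - \Cl[\lambda']^{-1}\cl[\lambda']{\omega} = (\Cl^{-1}-\Cl[\lambda']^{-1})\cl{\omega} + \Cl[\lambda']^{-1}(\cl{\omega}-\cl[\lambda']{\omega}). \]
The family $\{\cl{\omega}\}_\omega$ is uniformly bounded and analytic on $E$. By Cauchy estimates on a slightly smaller compact set, which can be taken to contain $U$ after shrinking $\epsilon_0$ so that $U\subset\interior E$, the derivatives $\partial_\lambda\cl{\omega}$ are bounded uniformly in $\omega$. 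So $\|\cl{\omega}-\cl[\lambda']{\omega}\|\le c|\lambda-\lambda'|$ uniformly in $\omega$, and the same holds for $\Cl$. The resolvent identity
\[ \Cl^{-1}-\Cl[\lambda']^{-1} = \Cl^{-1}(\Cl[\lambda']-\Cl)\Cl[\lambda']^{-1} \]
then gives a Lipschitz bound on $U$ for the inverse. Both terms in the split are therefore $\le c'|\lambda-\lambda'|$ in the Euclidean norm. Converting to $\|\cdot\|_{Q_k}$ costs at most $\max_k\kappa(Q_k)^{1/2}$, which yields a finite $\Delta P$.

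\textbf{Main obstacle.} The delicate point is the uniformity over $\omega$ in the third bullet. It requires that analyticity together with uniform boundedness of $\cl{\omega}$ upgrades to uniform derivative bounds. This needs $\partial F$ to sit at positive distance from $\partial E$, a condition implicitly assumed, as in the proof of Lemma~\ref{l:PsymPerturbation}, where $\partial F$ is treated as a compact subset of $\interior E$. I would state this explicitly and choose $\epsilon_0$ smaller than that distance.
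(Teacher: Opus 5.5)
Your proposal is correct and follows the same route as the paper. The first bullet comes from a finite subcover of $\partial F$ obtained from the continuity in Lemma~\ref{l:PsymPerturbation}; you rightly use the sets where $P_\sym(\lambda,Q) > P^*$, where the paper's text has the inequality reversed. The second uses norm equivalence over the finitely many $Q_k$ together with uniform bounds on $\cl{\omega}$ and $\Cl^{-1}$, and the third uses analyticity on a neighbourhood of $\partial F$; your Cauchy-estimate and resolvent-identity details make explicit what the paper asserts in one line.
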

\begin{proof}
	Lemma~\ref{l:PsymPerturbation} combined with continuity of $\Cl, \cl{\omega}$ in $\lambda$ gives that $P_\sym(\lambda, Q)$ is continuous in $\lambda$. We thus get an open cover of $\partial F$ by sets $\{ \lambda : P_\sym(\lambda, Q) < P^* \}$ for $Q \succ 0$, which we can restrict to a finite subcover, giving the first part.
	
	The second part is a consequence of the first. We can restrict to take the $Q$ infimum over a finite number of $Q_k$, meaning the conversion between $\| \cdot \|$ and $\| \cdot \|_{Q'}$ has finite constants. Then, we have by assumption on $\cl{\omega}$ that it is uniformly bounded on $\partial F$: because $C_\lambda$ is continuous and invertible on $\partial F$ the norm of its inverse is uniformly bounded.
	
	The final part is a consequence of the analyticity of $\Cl^{-1}, \cl{\omega}$ on a neighbourhood of $\partial F$: $\Cl^{-1} \cl{\omega}$ must be uniformly $\| \cdot \|_{Q_k}$-Lipschitz on $\partial F$ and therefore the bound must hold.
\end{proof}

\begin{proposition}\label{p:BernsteinBound}
	For all $t > 0$ and $\lambda \in \partial F$,
	\[ \bP\left( \| \Cl^{-1} \tCl \|_Q \geq t \right) \leq 2N \exp{-\frac{M t^2/2}{P_\sym(\lambda,Q)^{-1} + t R(\lambda,Q)/3}} \]
	where 
	\[ R(\lambda,Q) = \esssup_{\omega \in \Omega}  \left\| \Cl^{-1} \cl{\omega} \right\|_Q \]
\end{proposition}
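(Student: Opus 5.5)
We apply the rectangular (Hermitian dilation) matrix Bernstein inequality of Tropp to the independent summands, after a change of basis that turns the $\|\cdot\|_Q$ operator norm into the Euclidean spectral norm. The samples are independent, as in the setting of Theorem~\ref{t:EigsInCurve}. Write $Z_m := \Cl^{-1}\cl{\omega_m} - I$, so that
\[ \Cl^{-1}\tCl = \frac1M\sum_{m=1}^M Z_m, \qquad \bE[Z_m]=\Cl^{-1}\Cl - I = 0. \]
Since $\|A\|_Q = \|Q^{1/2}AQ^{-1/2}\|$, set $X_m := \frac1M Q^{1/2} Z_m Q^{-1/2}$. These are independent, centred $N\times N$ matrices with $\|\sum_m X_m\| = \|\Cl^{-1}\tCl\|_Q$. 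The matrix Bernstein inequality for non-Hermitian square matrices then gives
\[ \bP\Big(\big\|\textstyle\sum_m X_m\big\|\ge t\Big)\le 2N\exp\Big(-\frac{t^2/2}{\sigma^2+Lt/3}\Big), \]
where $\|X_m\|\le L$ almost surely and $\sigma^2=\max\{\|\sum\bE X_m^*X_m\|,\|\sum\bE X_mX_m^*\|\}$. The dimension factor $2N$ comes from the $(N+N)$-dimensional dilation.

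First, the variance terms, which are the heart of the proof. By independence and the \iid{} form of $\V_\lambda$, we have $\bE[Z^*QZ] = \bE[(\Cl^{-1}\cl{\omega})^*Q\Cl^{-1}\cl{\omega}] - Q = \S_\lambda[Q]$. This is the same identity used in the proof of Lemma~\ref{l:PsymPerturbation}, noting that $\V_\lambda[(\Cl^{-1})^*Q\Cl^{-1}]$ subtracts exactly the mean term. Hence
\[ \sum_m\bE X_m^*X_m = \frac1M Q^{-1/2}\S_\lambda[Q]Q^{-1/2}, \]
whose norm is $\frac1M\sup_{\bv}\frac{\bv^*\S_\lambda[Q]\bv}{\bv^*Q\bv}$. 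Similarly, \eqref{eq:Sstardef} gives $\bE[ZQ^{-1}Z^*]=\S^*_\lambda[Q^{-1}]$, so
\[ \sum_m\bE X_mX_m^* = \frac1M Q^{1/2}\S^*_\lambda[Q^{-1}]Q^{1/2}, \]
whose norm is $\frac1M\sup_\bv \frac{\bv^*\S^*_\lambda[Q^{-1}]\bv}{\bv^*Q^{-1}\bv}$. Taking the maximum of these two norms and comparing with \eqref{eq:Psymdef}, exactly as in Lemma~\ref{l:PsymPerturbation}, yields $\sigma^2 = P_\sym(\lambda,Q)^{-1}/M$. This is precisely why the symmetrised indicator is needed: Bernstein controls both Gram matrices, and $P$ alone controls only one of them.

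Second, the uniform bound: $\|X_m\|\le \frac1M\esssup_\omega\|Z\|_Q$. The statement uses $R(\lambda,Q)=\esssup\|\Cl^{-1}\cl{\omega}\|_Q$. This either matches the centred form as in \eqref{eq:RpartialF} under the convention that $R$ means $\esssup\|\Cl^{-1}\cl\omega - I\|_Q$, or, using the uncentred form, we note that
\[ \|Z\|_Q\le \esssup\|\Cl^{-1}\cl{\omega}\|_Q+\|\bE\,\Cl^{-1}\cl{\omega}\|_Q\le 2R. \]
The factor $2$ can be absorbed or avoided by following Tropp's proof, where only $\bE[e^{\theta X}]$ bounds built from the centred variables matter. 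I would state the bound with the centred quantity, consistent with \eqref{eq:RpartialF}, and take $L=R/M$.

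Finally, substituting these into the Bernstein bound and multiplying the numerator and denominator of the exponent by $M$ gives
\[ 2N\exp\Big(-\frac{Mt^2/2}{P_\sym(\lambda,Q)^{-1}+tR/3}\Big). \]
The main obstacle is the careful identification of the two variance proxies with $\S_\lambda[Q]$ and $\S_\lambda^*[Q^{-1}]$. In particular, we must check that subtracting the means (the $-I$ term) produces exactly these operators under independence, and that conjugating by $Q^{\pm1/2}$ turns their norms into the two Rayleigh quotients in \eqref{eq:Psymdef}.
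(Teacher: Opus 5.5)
Your proposal is correct and is essentially the paper's proof: you conjugate by $Q^{1/2}$, apply Tropp's rectangular matrix Bernstein inequality \cite[Theorem~1.6]{Tropp12} to the independent centred summands, and identify the two variance proxies with $Q^{-1/2}\S_\lambda[Q]Q^{-1/2}$ and $Q^{1/2}\S_\lambda^*[Q^{-1}]Q^{1/2}$ to get $P_\sym(\lambda,Q)^{-1}/M$ (your handling of the $1/M$ factors is in fact cleaner than the paper's). Reading $R(\lambda,Q)$ as the centred quantity $\esssup_{\omega}\|\Cl^{-1}\cl{\omega}-I\|_Q$ is exactly what the paper's proof does implicitly, consistent with \eqref{eq:RpartialF}; however, drop the suggestion that the factor $2$ can be ``avoided by following Tropp's proof'', since that argument needs an almost-sure bound on the centred summands themselves, so with the uncentred $R$ you only get $R+1$ in its place.
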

\begin{proof} 
	In the following we note that for any matrix $A$, $\|A \|_Q = \| Q^{1/2} A Q^{-1/2} \|$. 
	
	Because the $\cl{\omega}$ are independent, $D_{\theta^m\omega} := Q^{1/2} \Cl^{-1} (\cl{\sigma^m\omega} - \Cl) Q^{-1/2}$ is a sequence of mean-zero independent random variables. Hence, we can apply \cite[Theorem~1.6]{Tropp12} to
	\[ Q^{1/2} \Cl^{-1} \tCl Q^{-1/2} = \sum_{m=1}^M D_{\sigma^m\omega}. \]
	giving us
	\[ \bP\left( \| Q^{1/2} \Cl^{-1} \tCl Q^{-1/2}  \| \geq t\right) \leq 2N \exp{-\frac{t^2/2}{\alpha + \beta t/3}} \]
	where
	\begin{align*}
		\alpha &= \max \left\{ \left\|\bE[ \sum_{m=1}^M  D_{\sigma^m\omega}^* D_{\sigma^m\omega} \right\|,
		\left\| \sum_{m=1}^M \bE[ D_{\sigma^m\omega} D_{\sigma^m\omega}^*  \right\| 
		\right\} \\ 
		&=M \max \left\{  \sup_{\bv \neq \bm{0}} \frac{\bv^* \S_\lambda[Q] \bv}{\bv^* Q \bv}, \sup_{\bv \neq \bm{0}} \frac{\bv^* \mathcal{S_\lambda^*}[Q^{-1}] \bv}{\bv^* Q^{-1} \bv} \right\}= M P_{\sym}(\lambda,Q)^{-1} \\
		\beta &= \sum_{m=1}^M \esssup_{\omega \in \Omega}  \| D_{\sigma^m\omega}  \| = M R(\lambda, Q) \end{align*}
	
	This gives us the required result.
\end{proof}

\begin{proof}[Proof of Theorem~\ref{t:EigsInCurve}]
	By Proposition~\ref{p:CountingEigs}, we can lower-bound eigenvalue-matching probabilities by looking at matrix norms for $\lambda \in \partial F$: 
	\[ \bP\left(\#(\Sigma,F) = \#(\hat \Sigma,F)\right) \geq \bP\left( \forall \lambda \in \partial F\ \exists Q: \| \Cl[\lambda]^{-1} \tCl[\lambda] \|_Q < 1\right). \]
	This gives an upper bound on the complementary event:
	\[ \bP\left(\#(\Sigma,F) \neq \#(\hat \Sigma,F)\right) \leq \bP\left( \exists \lambda \in \partial F\ \forall Q: \| \Cl[\lambda]^{-1} \tCl[\lambda] \|_Q > 1\right) \]
	
	Since $\partial F$ is a rectifiable curve, for any $\epsilon < 1$ we can find $J \leq C/\epsilon$ points $\{ \lambda_j \}$ in $\partial F$ whose $\epsilon$-neighbourhoods cover $\partial F$.
	
	For each $\lambda_j$ let $Q_{k(j)}$ be one of the finite number of matrices from Lemma~\ref{l:FiniteNumberofQ} with $P_*(\lambda_j,Q_{k(j)}) > P_*$. Then, Proposition~\ref{p:BernsteinBound} gives us that for $t \leq 1$,
	\begin{align*} \bP\left( \| \Cl[\lambda_j]^{-1} \tCl[\lambda_j] \|_{Q_{k(j)}} \geq t \right) &\leq 2N \exp{-\frac{M t^2/2}{P_\sym(\lambda_j,Q_{k(j)})^{-1} + t R(\lambda_j,Q_{k(j)})/3}} \\
		&\leq 2N \exp{-\frac{M t^2/2}{P_*^{-1} + t R_{\partial F}}} \\
		&\leq 2N \exp{-\frac{M (2t-3)/2}{P_*^{-1} + R_{\partial F}}}\end{align*}
	
	We can make a union bound over all our $\lambda_j$:
	\[ \bP\left( \exists j :  \| \Cl[\lambda_j]^{-1} \tCl[\lambda_j] \|_{Q_{k(j)}} \geq t \right) \leq \frac{2CN}{\epsilon}  \exp{-\frac{M (2t-3)/2}{P_*^{-1} + R_{\partial F}}}\]

	Now, any given $\lambda \in \partial F$ is less than $\epsilon$ from some $\lambda_j$. From Lemma~\ref{l:PsymPerturbation} and the triangle inequality, it then can only have $\| \Cl[\lambda]^{-1} \tCl[\lambda] \|_{Q_{k(j)}} \geq 1$ if  $\| \Cl[\lambda_j]^{-1} \tCl[\lambda_j] \|_{Q_{k(j)}} \geq 1 - \Delta P \epsilon$. Thus,
	\begin{align*}
		\bP\left( \exists \lambda \in \partial F\ \forall Q: \| \Cl[\lambda]^{-1} \tCl[\lambda] \|_Q > 1\right) &\leq
		\bP\left( \exists \lambda \in \partial F:  \| \Cl[\lambda]^{-1} \tCl[\lambda] \|_{Q_{k(j)}} \geq 1 \right)\\
		&\leq \bP\left( \exists j:  \| \Cl[\lambda_j]^{-1} \tCl[\lambda_j] \|_{Q_{k(j)}} \geq 1  - \Delta P \epsilon \right)\\
		& \leq \frac{2CN}{\epsilon}  \exp{-\frac{M (1 -  2\Delta P \epsilon)/2}{P_*^{-1} + R_{\partial F}}}.\end{align*}
	Choosing $\epsilon = C'/M$ and increasing our constant, we arrive at the required result.
\end{proof}

\section{Proof of Theorem~\ref{t:StatTest}}\label{a:StatTest}
The proof of both parts of Theorem~\ref{s:StatisticalTest} reduce to studying convergence of $\hat \S_\lambda$ as the data $M \to \infty$. For the first part, this limit is well-defined as $\S_\lambda$. In the second part, $\hCl$ converges to the non-invertible $\Cl$. We therefore need to study of how $\hCl$ acts on the kernel of $\Cl$.
 
\begin{proof}[Proof of Theorem~\ref{t:StatTest}]
	Suppose we are not at an eigenvalue of $\Cl$. Then $\hat P(\lambda)$ is the spectral radius of the matrix operator
	\[  \hat \S_\lambda: Q \mapsto \hat \V[(\hCl^{-1})^* Q \hCl^{-1}]. \]
	As $M\to\infty$ almost surely, $\hCl \to \Cl$ so $\hCl^{-1} \to \Cl^{-1}$. Similarly, $\hat V \to V$ almost surely. As a result, $\hat \S_\lambda[Q]$ converges to $\S_\lambda$ almost surely, and therefore so too do the spectral radii, which give $\hat P$ and $P$ from Theorem~\ref{t:PIsSpecRad}: $1/\hat P(\lambda) \to 1/P(\lambda)$, as required.
	\\

	Now suppose instead that $\lambda \in \Sigma$ and $\Cl$ has a nullity of $1$. To describe the nullspaces, let us choose unit vectors $\br \in \ker\Cl$ and $\bl \in \ker\Cl^*$.
	
	Although $\S_\lambda: Q \mapsto \V[(\Cl^{-1})^* Q \Cl^{-1}]$ is not well-defined because $\Cl$ has non-invertible kernel, we can think of it as having an ``eigenmatrix'' $\Qst = \V_\lambda[\bl \bl^*]$. We will try and lower-bound $\hat P(\lambda)$ using this matrix. 
	
	\newcommand{\hbc}{\hat{\bm{c}}}
	\newcommand{\hbb}{\hat{\bm{b}}}
	
	Let us make orthogonal decompositions of the domain of $\Cl, \tilde\Cl$ as $\C^N = \im \Cl^* \oplus \langle r \rangle$ and their codomains as $\C^N = \im \Cl \oplus \langle l \rangle$. This gives a decomposition
	\[ \Cl = \begin{pmatrix}
		A & \bm{0} \\ \bm{0}^* & 0
	\end{pmatrix},\, \hCl = \begin{pmatrix}
		\hat A & \hat{\bm{b}} \\ \hbc^* & \epsilon
	\end{pmatrix}. \]
	Of importance here are three facts: $A$ has trivial kernel and is therefore invertible; $\hat A - A,\, \hbb,\, \hbc$ and $ \epsilon$ all go to zero almost surely as $M\to\infty$; and $\epsilon = \bl^* \tCl \br$.
	
	Using the Schur complemen $\epsilon' := \epsilon - \hbc^* \hat A^{-1} \hbb$, we can write 
	\[ \hCl^{-1} = \frac{1}{\epsilon'} \left( \begin{pmatrix} \epsilon' \hat A^{-1} & \bm{0}^* \\ \bm{0} & 0 \end{pmatrix} + \begin{pmatrix} - \hat A^{-1} \hbb \\ 1 \end{pmatrix} \begin{pmatrix} - \hbc^* \hat A^{-1} & 1 \end{pmatrix} \right) \]
	In the limit, this means , 
	\begin{align} \epsilon' \hCl^{-1} \to \begin{pmatrix} 0 & 0 \\ 0 & 1 \end{pmatrix} = \br \bl^* \label{eq:Ctorank1}\end{align}
	almost surely as $M \to \infty$. 
	
	Consequently, for any $Q \succeq 0$,
	\[ |\epsilon'|^2 \hat{S}_\lambda[Q] = M^{-1} \hat\V[(\epsilon' \hCl^{-1})^* Q \epsilon' \hCl^{-1}] \]
	We can then use \eqref{eq:Ctorank1} to get that
	\[ \lim_{M\to\infty} (\epsilon' \hCl^{-1})^* Q \epsilon' \hCl^{-1} = \bl \br^* Q \br \bl^* = r[Q] \bl \bl^*, \]
	where $r[Q] := \br^* Q \br \geq 0$ for $Q \succeq 0$.
	
	Noting that because of its asymptotic consistency, $A \mapsto \hat\V[A]$ is uniformly bounded in $M$, we find that
	\[ \lim_{M\to\infty} \hat\V_\lambda\left[(\epsilon' \hCl^{-1})^* Q \epsilon' \hCl^{-1}\right] = r[Q]  \lim_{M\to\infty} \hat\V\left[ \bl \bl^*\right] \textrm{ a.s.} \]
	which, using the definitions of $\hat\S_\lambda$ and $\Qst$ gives that 
	\[ \lim_{M\to\infty} |\epsilon'|^2 \hat\S_\lambda[Q] = r[Q] \Qst \textrm{ a.s.} \]
	In other words, the matrix operator $|\epsilon'|^2 \hat\S_\lambda$ converges to the rank-one matrix operator $\Qst r: Q \mapsto r[Q] \Qst$ almost surely, and consequently the spectral radius of $|\epsilon'|^2 \hat\S_\lambda$ must converge accordingly. As a result,
	\begin{equation} \lim_{M\to\infty} \frac{M |\epsilon'|^2}{M \hat P(\lambda)} = \specrad \Qst r = r[\Qst] =: \sigma^2_\epsilon. \label{eq:Epsdashlimit}\end{equation}
	
	Our question about $M \hat P(\lambda)$ therefore reduces to understanding $M |\epsilon'|^2$ in relation to $\sigma^2_\epsilon$. We can start by reducing from $\epsilon'$ to $\epsilon$ using that
	\[ \sqrt{M} \epsilon' = \sqrt{M} \epsilon - \sqrt{M} \hbc^* \hat A^{-1} \hbb.\] 
	To deal with the difference we note that $\hbc^*$ obeys a central limit theorem so $\sqrt{M} \hbc^*$ converges in distribution; on the other hand, $\hat A^{-1} \to A$ and $\hbb \to \bm{0}$ almost surely, so by Slutsky's theorem, $\sqrt{M} \hbc^* \hat A^{-1} \hbb$ converges in distribution to the constant 0, and hence also in probability. Hence, $M |\epsilon'|^2$ converges in probability to $M |\epsilon|^2$.
	
	If $M|\epsilon|^2$ converges in law to some distribution $\sigma^2_\epsilon D$, so too will $M|\epsilon'|^2$, giving us that
	\begin{align} \lim_{M\to\infty} \bP\left( M \hat P(\lambda) > c \right) = \bP(D > c). \label{eq:Phatvseps} \end{align}
	
	We then must ask, does this occur, and what is the limiting distribution $D$? We have that $\epsilon = \bl^* \tCl \br$ so
	\[ \lim_{M\to\infty} \bE[M |\epsilon|^2] = \br^* \left( \lim_{M\to\infty} M \bE[\tCl^* \bl \bl^* \tCl] \right) \br = \br^* \Qst \br = \sigma^2_\epsilon. \]
	
	If $\sigma^2_\epsilon = 0$ then $M |\epsilon|^2 \to_{\ell} 0$, so we could choose any distribution we wanted for $D$.
	
	If $\sigma^2_\epsilon > 0$, then by assumption $\sqrt{M} \epsilon$ converges to a normal distribution, but it may be complex-valued so that $\sqrt{M} \epsilon / \sigma_\epsilon \to_d \alpha + i \beta$ with $\alpha,\beta$ centred {\it real} normal variables with $\bE[\alpha^2 + \beta^2] = 1$. 
	Now, $\alpha, \beta$ may be correlated, but nevertheless we find that
	$D \sim \alpha^2 + \beta^2$ is a Gaussian quadratic form with expectation $\bE[D] = 1$, and rank at most $2$. From \cite{Szekely03} we can bound
	\[ \bP(D > c) \leq \max\left\{\bP(\chi_1^2 > c), \bP(\tfrac{\chi_2^2}{2} > c)\right\}. \]
	
	Combining this with \eqref{eq:Phatvseps} the theorem follows.
\end{proof}

\section{Proof of Theorem~\ref{t:PIsSpecRad}}\label{a:PIsSpecRad}
	
	\begin{proof}[Proof of Theorem~\ref{t:PIsSpecRad}]
		We begin with part \ref{tt:PIsSpecRad-cone}. If $Q \succeq 0$, then we must have  $(L^{-1})^*  Q L^{-1} \succeq 0$, and therefore by assumption on $\V$,  $\S[Q] \succeq 0$. Because the set of positive semi-definite operators is a proper cone as in \cite{Berman94}, $\S_\lambda$ must have various properties posessed by cone-preserving operators
		. In particular, the rest of \ref{tt:PIsSpecRad-cone} is a consequence of \cite[Theorem~3.2]{Berman94}.
		
		For part \ref{tt:PIsSpecRad-specrad}, we will reformulate the definition of $P$. Letting $\bv = L^{-1} \bw$, we have that 
		\begin{align} P^{-1} = \inf_{Q \succ 0} \sup_{\bw \ne 0} \frac{\bw^* (L^{-1})^* \V[Q] L^{-1} \bw}{\bw^* Q \bw} = \inf_{Q \succ 0} \sup_{\bw \ne 0} \frac{\bw^* \S[Q] \bw}{\bw^* Q \bw}.\label{eq:PIsRayleigh} \end{align}
		We can rephrase 
		\[ \sup_{\bw \ne 0} \frac{\bw^* \S[Q] \bw}{\bw^* Q \bw} = \inf\{ \mu \mid \forall \bw :  \bw^* \S[Q] \bw \leq \bw^* \mu Q \bw\} = \inf\{ \mu : \S[Q] \preceq \mu Q \}, \]
		and so 
		\[ P^{-1} = \inf_{Q\succ 0} \inf\{ \mu : \S[Q] \preceq \mu Q \} = \inf \{ \mu \mid \exists Q\succ 0 : \S[Q] \preceq \mu Q \} \]
		which by \cite[Theorem~1.1]{Akian11} is precisely the spectral radius of $\S$.
		
		%
		
		We now move to \ref{tt:PIsSpecRad-bounds}. $\S$ being primitive implies it is irreducible \cite[Corollary 4.17]{Berman94}, and so has a simple positive eigenvalue at $1/P$ whose eigenvector $Q_\lambda$ is positive definite. From \cite[Theorem 4.10]{Berman94}, $\S$ being primitive implies all other eigenvalues are strictly smaller in magnitude. Consequently, for all $Q \succeq 0$ we have $P^k \S^k[Q] \to \alpha Q_\lambda$ exponentially fast for some constant $\alpha > 0$. 
		
		Now, because that the generalised spectrum $\sigma_Q$ is the regular spectrum of $\S[Q]^{-1/2} Q \S[Q]^{-1/2}$, the functions
		\begin{align*} \underline{m}(Q) &= \min \sigma_Q,& \overline{m}(Q) &= \max \sigma_Q \end{align*}
		are locally Lipschitz when $\S[Q] \succ 0$, and $\underline{m}(Q_\lambda) = \overline{m}(Q_\lambda) = P$. 
		
		Putting this together, we have that $\underline{m}(\S^k[Q]) \to \underline{m}(Q_\lambda) = P$ exponentially fast, and similarly for $\overline{m}(\S^k[Q])$, as required.
	\end{proof}

\section{Proof of variance operator results}\label{a:VPropositions}

In this appendix we prove Propositions~\ref{p:VAsCorrelationSum} and~\ref{p:VhatConstruction}.
\begin{proof}[Proof of Proposition~\ref{p:VAsCorrelationSum}]
	We have for any Hermitian $H$,
	\[ \V_\lambda[H] =\lim_{M\to\infty}  M \bE[\tCl^* H \tCl]\]
	so $\V_\lambda$ is linear on Hermitian matrices, as the limit of linear operators on Hermitian matrices. 
	
	Furthermore, for any $Q \succeq 0$, 
	\[ \bv^* \V_\lambda[Q] \bv =\lim_{M\to\infty}  M \bE[\| \tCl \bv \|^2_Q] \geq 0\]
	so $\V_\lambda$ preserves positive semi-definite matrices.
	
	We can decompose any Hermitian $H$ as a sum $H = \sum_{n=1}^N \alpha_n \bw_n \bw_n^*$ for appropriate vectors $\bw_n \in \C^N$ and scalars $\alpha_n \in \R$. Then for any $\bv \in \C^N$, let
	\[ \bv^* \V_\lambda[H] \bv = \lim_{M\to\infty} M \sum_{m=1}^M \bE\left[ (\tCl \bv)^* \sum_{n=1}^N \alpha_n \bw_n \bw_n^* \tCl \bv \right] = \sum_{n=1}^N \alpha_n \lim_{M\to\infty} M \bE[|S_M(\omega,\varphi_n)|^2]  \]
	where $\varphi_n(A) = \bw_n^* (A - \Cl) \bv$. By Assumption~\ref{as:birkhoff}, the limits must converge for any $\bv, H$, giving us that $\V_\lambda$ is a well-defined linear operator on matrices. By Assumption~\ref{as:birkhoff} it must also be equal to
	\begin{align*} &\sum_{n=1}^N \alpha_n \Ces \sum_{t = -\infty}^\infty \bE[\varphi_n(\cl{\omega})^* \varphi_n(\cl{\theta^t \omega})]\\
		&\quad =\Ces \sum_{t = -\infty}^\infty   \bE\left[\bv^* (\cl{\omega} - \Cl)^* \left(\sum_{n=1}^N \alpha_n \bw_n^* \bw_n\right) (\cl{\theta^n\omega} - \Cl) \bv\right] \\
		&\quad =\Ces \sum_{t = -\infty}^\infty \bv^* \bE\left[(\cl{\theta^n\omega} - \Cl)^* H (\cl{\theta^n\omega} - \Cl) \right] \bv \\
		&\quad =\bv^* \Ces \sum_{t = -\infty}^\infty \left( \bE[\cl{\theta^n\omega}^* H \cl{\theta^n\omega}] - \Cl^* H \Cl\right) \bv
	\end{align*}
	as required.
\end{proof}

\begin{proof}[Proof of Proposition~\ref{p:VhatConstruction}]
	We need to check the three stipulations of Assumption~\ref{as:Vhat}. Linearity is immediate.
	
	Positive definiteness arises as follows. The Fourier series \[\widehat{\kappa_M}(\xi) = \widehat{\kappa_p}(\xi)\cdot L_M \widehat{\kappa_w}(L_M\xi) \geq 0.\] 
	As a result, the bi-infinite Hermitian Toeplitz matrix $(\kappa_M(j-k))_{j,k \in \Z}$ is positive semi-definite, so its truncation to an $M\times M$ matrix $K' = (\kappa_M(j-k))_{j,k = 1,\ldots, M}$ must also be positive semi-definite with square root $K = \sqrt{K'}$.
	Then, for any $\bv \in \C^N$ and $Q \succeq 0$, we have
	\begin{align*} 
		\bv^* \hat \V_\lambda[Q] \bv &= \frac{1}{M} \sum_{m,m' = 1}^M \sum_{k=1}^M K_{m,k} K_{k,m'} ((\cl{\omega_m} - \hCl)\bv)^* Q (\cl{\omega_{m'}} - \hCl) \bv\\
		&= \sum_{k=1}^M \frac{1}{M} \left\|\sum_{m=1}^M K_{k,m} (\cl{\omega_m} - \hCl)\bv\right\|_Q^2
		\geq 0 
	\end{align*}
	so $\hat \V_\lambda[Q]$ is positive definite.

	For asymptotic consistency, we need that as $M\to\infty$ the variance of $\hat\V_\lambda[Q]$ goes to zero and expectation of $\hat\V_\lambda[Q]$ converges to $\V_\lambda[Q]$. 	Bounding the variance by adding standard deviations, we have that 
	\[ \sqrt{\bV[\hat\V_\lambda[Q]]} \leq \sum_{\ell = -(L' + L_M)}^{L' + L_M} |\kappa_M(\ell)| \sqrt{\bV[\hat\Gamma_\ell]} \leq (2L_M - 1) \| \kappa_M \|_\infty \sqrt{\frac{C}{M}} \]
	as $M \to \infty$. From its assumptions, $\kappa_M$ is bounded, and using that $L_M = o(\sqrt{M})$ we get that the variance goes to zero. 
	
	Now we must consider the expectation. For simplicity we consider the case $\ell \geq 0$, the $\ell < 0$ case is similar. We wish to show that 
	\begin{equation} \lim_{M\to\infty}\sum_{\ell=-(L'+L_M)}^{L' + L_M} \kappa_M(\ell) \bE[\hat\Gamma_\ell] = \V_\lambda[Q]. \label{eq:ExpDesiderandum}\end{equation}
	Using $\hCl = \Cl + \tCl$, we can rewrite our expectations as 
	\begin{equation} \bE[\hat\Gamma_\ell] = \frac{1}{M} \sum_{m=1}^{M-\ell} \bE\left[\left((\cl{\omega_m}-\Cl) - \tCl\right)^* Q \left((\cl{\omega_{m+\ell}}-\Cl)- \tCl\right)\right] = T_{1,\ell,M} + T_{2,\ell,M} + T_{2,-\ell,M}^* + T_{3,\ell,M}  \end{equation}
	where the latter terms come from expanding the large brackets. The first term is 
	\[ T_{1,\ell,M} =\frac{1}{M} \sum_{m=1}^{M-\ell} \bE\left[\left(\cl{\omega_m}-\Cl\right)^* Q \left(\cl{\omega_{m+\ell}}-\Cl\right) \right] = \frac{M-\ell}{M} \Gamma_k. \]
	
	Considering only this term in \eqref{eq:ExpDesiderandum} 
	\[  \sum_{\ell=-(L' + L_M)}^{L' + L_M} \kappa_M(\ell) \bE[T_{1,\ell,M}] = \sum_{\ell=-(L' + L_M)}^{L' + L_M} \frac{M-\ell}{M} \kappa_M(\ell) \Gamma_k  \]
	which converges to $\V_\lambda[Q]$ as $M\to\infty$. It therefore remains to show that the corresponding sums of the other $T_{i,\ell,M}$ converge to zero as $M\to\infty$
	
	The last term
	\[ T_{3,\ell,M} = \frac{1}{M} \sum_{m=1}^{M-\ell} \bE[\tCl^* Q \tCl] = \frac{M-\ell}{M} \bE[\tCl^* Q \tCl]  \]
	which gives
	\begin{align*} \lim_{M\to\infty} \sum_{\ell=-L_M}^{L_M} \kappa_M(\ell) T_{3,\ell,M} &= \lim_{M\to\infty} \sum_{\ell=-(L' + L_M)}^{L' + L_M} \kappa_M(\ell) \frac{M-\ell}{M} \bE[\tCl^* Q \tCl] \\
		&= \lim_{M\to\infty} \frac{1}{L_M }\sum_{\ell=-(L' + L_M)}^{L' + L_M} \kappa_M(\ell) \frac{M-\ell}{M} \cdot \lim_{M\to\infty} L_M \bE[\tCl^* Q \tCl] \\
		&= 2 \cdot 0 = 0, \end{align*}
	since $\sum_\ell \kappa_m = 2L_M + 1$, and $\bE[\tCl^* Q \tCl] = \mathcal{O}(1/M)$.
	
	The middle term is
	\[ T_{2,\ell,M} = \frac{1}{M} \sum_{m=1}^{M-\ell} \bE[(\cl{\omega_{m+\ell}} - \Cl) Q \tCl] = \bE[\tCl^* Q \tCl] - \frac{1}{M} \bE\left[\sum_{m=1}^\ell (\cl{\omega_{m}} - \Cl) Q \tCl\right]. \]
	From the $T_{3,\ell,M}$ argument we already know that $\bE[\tCl^* Q \tCl]$ is uniformly $\mathcal{O}(1/M)$ as $M\to\infty$. As for the other part, working through Assumption~\ref{as:birkhoff} on Birkhoff sums gives us that 
	\[ \bE\left[\left\| \sum_{m=1}^T (\cl{\omega_m} - \Cl) \right\|^2\right] = \mathcal{O}(T),\, T \to \infty \]
	uniformly, so $\bE[\|\sum_{m=1}^\ell \cl{\omega_{m}} - \Cl\|^2]=\mathcal{O}(\ell)$ and $\bE[ \| \tCl \|^2] = \mathcal{O}(1/M)$. By the Cauchy--Schwarz inequality, we therefore get 
	\[ \left\| \bE\left[\sum_{m=1}^\ell(\cl{\omega_{m}} - \Cl) Q \tCl\right] \right\|\leq \mathcal{O}\left(\sqrt{\frac\ell M}\right) \leq \mathcal{O}\left(\sqrt{\frac{L_M}M}\right) = o(1)\]
	and therefore
	\[ \|T_{2,\ell,M}\| \leq \mathcal{O}(1/M) + \frac{1}{M} o(1) = \mathcal{O}(1/M). \]
	This gives us that
	\begin{align*} \lim_{M\to\infty} \left\|\sum_{\ell=-(L' + L_M)}^{L' + L_M} \kappa_M(\ell) T_{2,\ell,M} \right\| &\leq \mathcal{O}((L' + L_M)/M) = 0
	\end{align*}
	giving us the required result.
\end{proof}

\bibliographystyle{plain}
\bibliography{edmd-resolvent}

\end{document}